\documentclass[a4 paper,11pt]{amsart}

\usepackage{amsmath}
\usepackage{amssymb}
\usepackage{amscd}
\usepackage{amsthm}
\usepackage{mathrsfs}
\usepackage{euscript}
\usepackage{eufrak}
\usepackage{ulem}
\usepackage[alphabetic]{amsrefs}
\usepackage{ascmac} 
\usepackage{comment}
\usepackage{bm}

\usepackage[dvipdfmx]{graphicx} 

\usepackage{hyperref}
\usepackage{xcolor}
\definecolor{unbleu}{rgb}{0.03, 0.15, 0.4}

\hypersetup{
pdfborder = {0 0 0},
colorlinks,
linkcolor=unbleu,
citecolor=unbleu,
urlcolor=unbleu
}

\usepackage{fancyhdr}
 \newtheorem{theorem}{Theorem}[section]
 \newtheorem{lemma}[theorem]{Lemma}
 \newtheorem{proposition}[theorem]{Proposition}

\newtheorem{maintheorem}{Main Theorem} 
  
\theoremstyle{definition}
\newtheorem{definition}[theorem]{Definition}
\newtheorem{remark}[theorem]{Remark}

\newtheorem{claim}[theorem]{Claim}


\newcommand{\R}{\mathbb R}
\newcommand{\N}{\mathbb N}
\newcommand{\T}{\mathbb T}


%



\begin{document}

\title[]{The Aubry set for the XY model and typicality of periodic optimization for $2$-locally constant potentials}

\author[Y.Kajihara]{Yuika Kajihara}
\address{Department of Mathematics, Kyoto University, Kitashirakawa Oiwake-cho, Sakyo-ku,
Kyoto, 606-8502, Japan}
\email{kajihara.yuika.6f@kyoto-u.ac.jp}

\author[S. Motonaga]{Shoya Motonaga}
\address{Department of Mathematical Sciences, Ritsumeikan University, 1-1-1 Nojihigashi, Kusatsu, Shiga 525-8577, Japan}
\email{motonaga@fc.ritsumei.ac.jp}

\author[M. Shinoda]{Mao Shinoda}
\address{Department of Mathematics, Ochanomizu University, 2-1-1 Otsuka, Bunkyo-ku, Tokyo, 112-8610, Japan}
\email{shinoda.mao@ocha.ac.jp}

\subjclass[2020]{\textcolor{black}{Primary} 37B10, 37E40, 37A99,  	37J51}
\keywords{}

\begin{abstract}
    We consider the Aubry set for the XY model, symbolic dynamics $([0,1]^{\mathbb{N}_0},\sigma)$ with the uncountable symbol $[0,1]$, and study its action-optimizing properties.
    Moreover, for a potential function that depends on the first two coordinates we obtain an explicit expression of the set of optimal periodic measures and a detailed description of the Aubry set. We also show the typicality of periodic optimization for 2-locally constant potentials with the twist condition.
     Our approach combines the weak KAM method for symbolic dynamics and variational techniques for twist maps.
\end{abstract}

\maketitle

\section{Introduction}
\label{sec:intro}
This paper serves as a bridge between ergodic optimization for symbolic dynamics and variational problems for twist maps.
More precisely, we consider symbolic dynamics with the uncountable symbols [0,1], the so-called XY model, and investigate the associated action-optimizing sets for Lipschitz continuous potentials, in particular 2-locally constant functions, from the view points of the weak KAM method and variational approaches. 
It is well known that, in Aubry-Mather theory for Euler-Lagrange flows \cite{Mather91,Mane1997}, optimizing invariant probability measures are closely related with  optimizing curves through the principle of least action.
Although our system has no variational structure, we will see the advantages of the concept of ``optimizing orbits" as in \cite{BLL13} and 
of variational techniques based on \cite{Ban88} in ergodic optimization. Before presenting our main results, we briefly give the background of our study and some key notions.

For a continuous map $T$ on a compact metric space $\mathcal{X}$ and a continuous function (called as a {\it potential}) $\varphi: \mathcal{X}\rightarrow \R$, \textit{ergodic optimization} investigates the {\it optimal (minimum) ergodic average}
\begin{align*}
    \alpha_\varphi=\inf_{\mu\in \mathcal{M}_T(\mathcal{X})}\int \varphi d\mu
\end{align*}
where $\mathcal{M}_T(\mathcal{X})$ is the set of $T$-invariant Borel probability measures on $\mathcal{X}$ endowed with the weak*-topology.
An invariant measure which attains the minimum is called an {\it optimizing (minimizing) measure} for $\varphi$ and denote by $\mathcal{M}_{{\rm min}}(\varphi)$ the set of optimizing measures for $\varphi$.
Since $\mathcal{M}_T(\mathcal{X})$ is compact and $\int \varphi d(\cdot): \mathcal{M}_T(\mathcal{X})\to \R$ is continuous, $\mathcal{M}_{{\rm min}}(\varphi)\neq \emptyset$. 
Note that we consider the minimum ergodic average instead of the maximum one in order to describe a more natural connection with the minimizing method in variational problems (see Section~\ref{VP}). We also remark that Jenkinson's formula \cite{Jenkinson19} for the optimal ergodic average:
\begin{align}\label{eq:Jenkinson}
    \alpha_\varphi=\inf_{x\in \mathcal{X}}\liminf_{n\to\infty}\frac{1}{n}S_n\varphi(x)=\liminf_{n\to\infty}\inf_{x\in \mathcal{X}}\frac{1}{n}S_n\varphi(x)
\end{align}
where 
\begin{align}
\label{eq:finite_sum}
    S_n\varphi=\sum_{i=0}^{n-1}\varphi\circ T^i.
\end{align}

The uniqueness of optimizing measures and the ``shape (complexity)" of their support are fundamental questions of ergodic optimization. This leads to the definition of the {\it Mather set}
\begin{align*}
    \mathscr{M}_\varphi=\bigcup_{\mu\in \mathcal{M}_{{\rm min}}(\varphi)}{\rm supp}(\mu),
\end{align*}
where ${\rm supp}(\mu)$ is the intersection of all compact sets with full measure with respect to $\mu$.
There are several results on the uniqueness of the optimizing measures and on the low complexity of the Mather set for ``typical" functions (see \cite{Jenkinson19} and the references therein for more details). However, it is worth pointing out that there are few specific examples whose optimizing measures are well understood.

One of the fundamental approaches to extract the detailed description of the Mather set is to consider (calibrated) subactions for potentials. We do not touch the details of this notion here (see Section~ for the precise definition and properties).
We only remark that a certain value of the level set of an associated function contains the Mather set, and thus the existence and uniqueness of (calibrated) subactions are also interesting problems. 

Another (but closely related) approach to obtain more precise information about  the Mather set is to investigate ``optimizing orbits" for potentials. Inspired by the weak KAM theory \cite{Fathi14} due to Fathi, \cite{BLL13} introduced
several important notions related to ``optimizing orbits" for symbolic dynamics with a finite set of symbols.
Borrowing their ideas presented in \cite{BLL13}, we first study an ``action-optimizing set" of Lipschitz continuous potentials for symbolic dynamics whose symbol is the interval $[0,1]$.
Let $\mathbb{N}_0$ be the set of non-negative integers and  let $X=[0,1]^{\mathbb{N}_0}$. Set the metric on $X$ by
\begin{align*}
    d(\underline{x},\underline{y})=\sum_{i=0}^\infty \frac{|x_i-y_i|}{2^i}
\end{align*}
for $\underline{x},\underline{y}\in X$
where $|\cdot|$ is the Euclidean distance in the interval $[0,1]$.
Let $\sigma:X\rightarrow X$ be the left shift, i.e., $(\sigma(\underline{x}))_i=x_{i+1}$ for all $i\in\N_0$, and we call the topological dynamical system $(X,\sigma)$ the {\it XY model}. See \cite{CR, LM14} for its details. From now on, we consider ergodic optimization for the case $\mathcal{X}=X$ and $T=\sigma$.
For a Lipschitz continuous potential $\varphi: X\to \R$, we define two functions 
the {\it Ma\~n\'e potential} $S_\varphi(\cdot, \cdot)$ and {\it Peierl's barrier} 
$H_\varphi(\cdot,\cdot)$ on $X\times X$ as
    \begin{align*}
        S_\varphi(\underline{x},\underline{y})&=\lim_{\varepsilon\to 0}\inf\{S_n(\varphi-\alpha_\varphi)(\underline{z}) \mid n\in \mathbb{N}, \underline{z}\in B(\underline{x},\underline{y},n;\varepsilon)\},\\
        H_\varphi(\underline{x}, \underline{y};\varepsilon)&=\lim_{\varepsilon\to 0}\liminf_{n\to \infty}\{S_n(\varphi-\alpha_\varphi)(\underline{z}) \mid \underline{z}\in B(\underline{x},\underline{y},n;\varepsilon)\},
    \end{align*}
    where
    \begin{align*}
        B(\underline{x},\underline{y},n;\varepsilon)&=
       \{\underline{z} \in X \mid d(\underline{x},\underline{z})<\varepsilon, d(\sigma^n(\underline{z}),\underline{y})<\varepsilon\}.
    \end{align*}
    See Section~\ref{sec:Shinoda} for the details of $S_\varphi$ and $H_\varphi$. Then we define the {\it Aubry set} $\Omega_\varphi$ as the zero-level set of $\tilde{S}_\varphi$,
where $\tilde{S}_\varphi(\underline{x})=S_{\varphi}(\underline{x},\underline{x})$.
   We remark that  $\Omega_\varphi$ includes the Mather set $\mathscr{M}_\varphi$ of $\varphi$.

Now we give our first main results concerning Peierl's barrier.
A positive-semi orbit $\{\sigma^n(x)\}_{n\in\N_0}$ is said to be
$\varphi$-static if for any non-negative integers $i<j$ it holds that
\[
	\sum_{n=i}^{j-1} \Big(\varphi\circ\sigma^n(\underline{x})-\alpha_{\varphi}\Big)=-S_\varphi(\sigma^{j}(\underline{x}),\sigma^i(\underline{x})).
\]
Define 
\begin{align*}
	A_{\varphi}&:=\{\sigma^k(\underline{x})\in X \mid \{\sigma^n(\underline{x})\}_{n\in\N_0}\ \text{is}\ \varphi\text{-static}, k\in\N_0\}.
\end{align*}
Note that this definition is motivated by Ma\~n\'e's work \cite{Mane1997} for Lagrangian systems. See Section \ref{sec:character_Aubry} for their details.
We then obtain the following characterizations of the Aubry set and Peierl's barrier as in \cite{BLL13, Mane1997}.
\begin{maintheorem}\label{action_minimizing_aubry}
	Let $\varphi:X\rightarrow \mathbb{R}$ be a Lipschitz function
and define $\tilde{H}_{\varphi}(\underline{x}) = {H}_{\varphi}(\underline{x}, \underline{x}) $. Then, we have
\[\Omega_{\varphi}=\tilde{H}_{\varphi}^{-1}(\{0\})=A_{\varphi}.\]
\end{maintheorem}

\begin{maintheorem}\label{theorem:cal_equiv}
For any \( \underline{x} \in \Omega_\varphi \),  
\( {H}_{\varphi}(\underline{x}, \cdot) \colon X \to \R \) is a Lipschitz calibrated subaction.  
Moreover, the relation \( \underline{x} \sim \underline{y} \) given by  
\[H_{\varphi}(\underline{x},\underline{y})+H_{\varphi}(\underline{y},\underline{x})=0\]
is an equivalence relation if both \( \underline{x} \) and \( \underline{y} \) belong to \( \Omega_{\varphi} \).
\end{maintheorem}

Next, we restrict our attention to 2-locally constant potentials and make use of
variational techniques.
Although ``action-optimizing sets" play important roles in the study of Lagrangian systems as well as symbolic dynamics with finite symbols,
it is difficult to obtain explicit information about these sets in most cases.
On the other hand, for the case of area-preserving twist maps on an annulus, Aubry and Mather originally developed their theory and it provides much detailed descriptions of ``minimal" orbits (originally appeared in \cite{Morse24} under the name ``Class A"), i.e., minimizers under arbitrary two-point boundary conditions.
There are several papers by them related to twist maps; for example, see \cite{Aubry83, AD83, Mather82, Mather89, Mather91}.
The best general reference here is Bangert's survey article \cite{Ban88}. Following \cite{Ban88}, we assume the twist condition for our 2-locally constant potentials and give the following result.
\begin{maintheorem}
\label{mainthm:VP}
    Suppose that $\varphi(\underline{x})=h(x_0,x_1)$
    with $D_2D_1h<0$, where  $h:[0,1]^2\to \R$ is a $C^2$-function on $[0,1]^2$ and $D_i$ means derivative for the $i$-th component for $i=1,2$.
        Let $h^\ast=\min_{x \in [0,1]} h(x,x)$ and $\mathrm{m}=\{a\in[0,1]\mid h(a,a)= h^\ast\}$.
    Then we have (1)-(3):
    \begin{itemize}
    	\item[(1)] $\alpha_\varphi= h^\ast$.
	\item[(2)] $\mathcal{M}_{{\rm min}}(\varphi)\cap \mathcal{M}^{\mathrm{p}}=\{\delta_{a^\infty}\mid a\in \mathrm{m}\}$,
    where $\delta_{\underline{x}}$ is the Dirac measure supported at $\underline{x}$ and $\mathcal{M}^{\mathrm{p}}$ stands for the set of invariant probability measures supported on a single periodic orbit.
	\item[(3)] $\Omega_\varphi\subset \mathrm{m}^{\N_0}$, i.e., for any $\underline{x} = \{x_i\}_{i \in \N_0} \in \Omega_{\varphi}$ we have
    \[h(x_i,x_i)= h^\ast \ \text{for all} \ i \in \N_0.\]
    \end{itemize}
    If, in addition, $h(x,x)$ has a unique minimum point $a_\ast$ in $[0,1]$, then the Mather set of $\varphi$ coincides with the Aubry set of $\varphi$
	and it consists of the single fixed point $a_\ast^\infty$.
\end{maintheorem}
We call the assumption $D_2D_1h<0$ the {\it twist condition}.
Note that Main theorem \ref{mainthm:VP} holds under weaker assumptions for Lipschitz continuous $h$ on $[0,1]^2$ (see Section~\ref{VP} for the details).
We emphasize that, for 2-locally constant potentials with the twist condition, Main theorem \ref{mainthm:VP} provides the explicit formulas of the optimal ergodic average and of the set of optimizing periodic measures, and in some cases it also completely determines the Mather set and the Aubry set.

Finally we turn to the typically periodic optimization (TPO) problem in the class of  $2$-locally constant potentials for the XY model.
In the field of ergodic optimization, for ``chaotic" systems, it is conjectured that the optimizing measure for  a ``typical" potential with a suitable regularity is a periodic measure, i.e., supported on a single periodic orbit.
Many authors investigate the ``typicality'' of the periodic minimizing measures in many contexts (see \cite{Jenkinson19} for more details).
Recently Gao et.al established a TPO property of real analytic expanding circle maps for smooth potentials \cite{GSZ}. Our result is also formulated for smooth potentials as described below.
Consider the set of $C^r$-functions ($r\ge 2$) with the twist condition,
\[
\mathscr{H}^r=\{h\in C^r([0,1]^2;\R)\mid D_2 D_1 h<0 \},
\]
equipped with the $C^r$-norm. Using Theorem~\ref{mainthm:VP}, we obtain the following TPO property.
\begin{maintheorem}[TPO property for the XY model in the class of 2-locally constant functions]\label{maintheorem:TPO}
	Let $r\ge 2$ be an integer. For the XY model, we have the followings:
    \begin{itemize}
    \item[(i)] There is a $C^r$ open dense subset $\mathscr{O}$ in $\mathscr{H}^r$ such that for each $h\in \mathscr{O}$
	the Mather set and the Aubry set of $h$ consist of a single fixed point.
    \item[(ii)] For arbitrary $h\in\mathscr{H}^r$ there is a $C^r$ open dense subset $\mathscr{V}_h$ in $C^r([0,1];\R)$ such that for each $V\in \mathscr{V}_h$
	the Mather set and the Aubry set of $h+V$ consist of a single fixed point.
    \end{itemize}
\end{maintheorem}

The structure of this paper is as follows.  
In Section~\ref{sec:Shinoda}, we extend the definitions of the Ma\~n\'e potential and Peierl's barrier
for symbolic dynamics with finite alphabets to our setting.
We also confirm that these functions satisfy similar properties presented in \cite{BLL13}. 
In Section~\ref{sec:character_Aubry}, we derives a characterization of the Aubry set in terms of Ma\~n\'e's action-optimizing approach \cite{Mane1997}.  
In Section~\ref{VP}, we consider 2-locally constant potentials under several assumptions and determine the set of optimizing periodic measures as well as the elements in the Aubry set.
Finally, we verify that the TPO property holds within our framework in Section~\ref{sec:TPO}.

\section{Ma\~n\'e potential and Peierl's barrier}\label{sec:Shinoda}
In this section, following \cite{BLL13}, we consider the Ma\~n\'e potential, Peierl's barrier, and the Aubry set.
We begin with the definition of the Ma\~n\'e potential.
\begin{definition}[Ma\~n\'e potential]
\label{defi:mane-potential}
    For $\varphi:X\rightarrow\mathbb{R}$ and $\varepsilon>0$ define $S_\varphi: X\times X\rightarrow \mathbb{R}\cup\{\infty\}$ by
    \begin{align*}
        S_\varphi(\underline{x},\underline{y};\varepsilon)=\inf\{S_n(\varphi-\alpha_\varphi)(\underline{z}) \mid n\in \mathbb{N}, \underline{z}\in B(\underline{x},\underline{y},n;\varepsilon)\}
    \end{align*}
    where 
    \begin{align*}
        B(\underline{x},\underline{y},n;\varepsilon)=
       \{\underline{z} \in X \mid d(\underline{x},\underline{z})<\varepsilon, d(\sigma^n(\underline{z}),\underline{y})<\varepsilon\}.
    \end{align*}
    We define the Ma\~n\'e potential $S_{\varphi}$ by
    \begin{align*}
        S_\varphi(\underline{x},\underline{y})=\lim_{\varepsilon\to 0 }S_\varphi(\underline{x},\underline{y};\varepsilon),
    \end{align*}
\end{definition}

\begin{remark}
    The Ma\~n\'e potential originates from the context of the Aubry-Mather theory for Euler-Lagrange flows.
    Ma\~n\'e \cite{Mane1997} considered a function $\phi \colon M \times M \to \R$ defined by
    \begin{align}
    \label{eq:mane-el}
    \phi(x,y)
    =\inf_{T>0} \inf_{\gamma \in C(x,y;T)} \int_{0}^{T} (L(\gamma, \dot{\gamma}) -\tilde\alpha_{\varphi})dt,
    \end{align}
    where $M$ is a closed Riemannian manifold, $L:TM\to\R$ is a Tonelli Lagrangian (see \cite{Mane1997} for the precise definition),
    $C(x,y;T)$ is the set of absolutely continuous curves $\gamma \colon \R \to M$ with $\gamma(0)=x, \gamma(T)=y$, and $\tilde\alpha_{\varphi}$ is given by
    \[
    \tilde\alpha_{\varphi}=\inf_{\mu\in \mathcal{M}_{\Phi^t(L)}} \int L(\gamma,\dot{\gamma}) d\mu,
    \]
    with the Euler-Lagrange flow $\Phi^t(L)$. 
    The right-side of $\eqref{eq:mane-el}$ corresponds to a minimizing method that provides trajectories with  the energy $\tilde\alpha_{\varphi}$ in the two-point boundary value problem.
    \cite{BLL13} has rewritten
     this concept in the context of ergodic optimization for symbolic dynamics with finite symbols, and Definition~\ref{defi:mane-potential} is an analogy of their definition.
\end{remark}

\begin{definition}[Aubry set]\label{aubry_set}
    The set $\Omega_\varphi=\{\underline{x}\in X \mid S_\varphi(\underline{x},\underline{x})=0\}$
    is called the Aubry set of $\varphi$.
\end{definition}
Note that we will see that $S_\varphi$ does not take $-\infty$ in Lemma~\ref{mane_calibrated} if $\varphi$ is Lipschitz.
The following notion, called \textit{(calibrated) subation}, is important as a technical tool for ergodic optimization.
\begin{definition}[Subaction, calibrated subaction]
\label{calibrated_subaction}
    A continuous function $u:X\rightarrow \mathbb{R}$ is called a {\it subaction} of $\varphi$ if
    \begin{align*}
        u(\underline{x})+\varphi(\underline{x})\geq u(\sigma \underline{x})+\alpha_\varphi
    \end{align*}
    for every $\underline{x}\in X$.
    Moreover, a subaction $u$ is called {\it calibrated} if 
    \begin{align*}
        \min_{\sigma(\underline{y})=\underline{x}}(\varphi(\underline{y})+u(\underline{y}))=u(\underline{x})+\alpha_\varphi
    \end{align*}
    for every $\underline{x} \in X$.
\end{definition}

\begin{remark}
    There exists a calibrated subaction for a Walters function $\varphi$ on a weakly-expanding topological dynamical system by 
    \cite{Bou01}.
    Here, `Walters function' is a broader class of functions that includes Holder continuous functions.
    Moreover, we can get Lipschits calibrated subaction for Lipschitz $\varphi$. See \cite{BCLMS11} for the details.
\end{remark}
The next lemma gives the lower bound of $S_{\varphi}$ using subactions.
\begin{lemma}
\label{mane_calibrated}
    Assume $\varphi:X\rightarrow \mathbb{R}$ be Lipschitz.
    For a Lipschitz subaction $u$ of $\varphi$ we have
    \begin{align}
        S_\varphi(\underline{x},\underline{y})\geq u(\underline{y})-u(\underline{x})
        \label{mane_calibrated_ineq}
    \end{align}
    for every $\underline{x},\underline{y}\in X$.
\end{lemma}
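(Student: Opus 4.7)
The plan is to derive the inequality by iterating the pointwise subaction inequality along an orbit segment and then passing to the limit in the parameter $\varepsilon$.

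First I would fix any $\underline{z} \in X$ and any $n \in \N$, and telescope the defining subaction inequality. By Definition~\ref{calibrated_subaction}, for every $\underline{w} \in X$ we have $\varphi(\underline{w}) - \alpha_\varphi \geq u(\sigma \underline{w}) - u(\underline{w})$. Applying this at $\underline{w} = \sigma^i(\underline{z})$ for $i = 0, 1, \dots, n-1$ and summing gives
\[
S_n(\varphi - \alpha_\varphi)(\underline{z}) = \sum_{i=0}^{n-1}\bigl(\varphi(\sigma^i \underline{z}) - \alpha_\varphi\bigr) \geq u(\sigma^n \underline{z}) - u(\underline{z}),
\]
which is the key telescoping estimate.

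Next I would use Lipschitz continuity of $u$ to transfer this estimate from the orbit endpoints $\underline{z}, \sigma^n\underline{z}$ to the prescribed points $\underline{x}, \underline{y}$. Let $L_u$ be a Lipschitz constant for $u$. For any $\underline{z} \in B(\underline{x}, \underline{y}, n;\varepsilon)$ we have $d(\underline{x},\underline{z}) < \varepsilon$ and $d(\sigma^n \underline{z}, \underline{y}) < \varepsilon$, so
\[
u(\sigma^n \underline{z}) - u(\underline{z}) \geq \bigl(u(\underline{y}) - L_u \varepsilon\bigr) - \bigl(u(\underline{x}) + L_u \varepsilon\bigr) = u(\underline{y}) - u(\underline{x}) - 2L_u\varepsilon.
\]
Combining with the previous display and taking the infimum over $n \in \N$ and $\underline{z} \in B(\underline{x},\underline{y},n;\varepsilon)$ yields
\[
S_\varphi(\underline{x}, \underline{y};\varepsilon) \geq u(\underline{y}) - u(\underline{x}) - 2L_u\varepsilon.
\]

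Finally I would let $\varepsilon \to 0$. Since $\varepsilon \mapsto S_\varphi(\underline{x}, \underline{y};\varepsilon)$ is monotone nonincreasing as $\varepsilon$ decreases (smaller balls give infima over smaller sets), the limit defining $S_\varphi(\underline{x},\underline{y})$ exists in $\R \cup \{+\infty\}$, and the lower bound just established passes to the limit to give $S_\varphi(\underline{x},\underline{y}) \geq u(\underline{y}) - u(\underline{x})$. As a byproduct, this shows $S_\varphi$ never takes the value $-\infty$, since the right-hand side is finite (by continuity of $u$ on the compact space $X$), which is the claim mentioned just before the lemma. The argument is essentially a direct verification and I do not foresee any serious obstacle; the only mild point to be careful about is existence of a Lipschitz subaction itself, which however is guaranteed by the result of \cite{BCLMS11} cited in the remark preceding the lemma.
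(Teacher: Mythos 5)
Your proof is correct and follows essentially the same route as the paper: telescope the pointwise subaction inequality to get $S_n(\varphi-\alpha_\varphi)(\underline{z}) \ge u(\sigma^n\underline{z}) - u(\underline{z})$, use the Lipschitz bound on $u$ to replace the endpoints $\underline{z},\sigma^n\underline{z}$ by $\underline{x},\underline{y}$ up to an $O(\varepsilon)$ error, take the infimum, and let $\varepsilon\to 0$. Your error term $2L_u\varepsilon$ is the careful constant (the paper writes $L_u\varepsilon$, which is a harmless slip), and your side remark that $S_\varphi$ never equals $-\infty$ matches the comment the paper makes just after the lemma.
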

\begin{proof}
    Since $\varphi$ is Lipschitz, its calibrated subaction $u$ is also Lipschitz.
    Then we have
    \[\varphi(\underline{x})-\alpha_\varphi\geq u\circ \sigma(\underline{x})-u(\underline{x})\]
    for all $\underline{x}\in X$.
    Fix $\varepsilon>0$.
    Take $n\geq 1$ and $\underline{z}\in B(\underline{x},\underline{y},n;\varepsilon)$.
    Then we have
    \begin{align*}
        S_n(\varphi-\alpha_\varphi)(\underline{z})
            &\geq S_n(u\circ \sigma-u)(\underline{z})\\
            &=u(\sigma^n(\underline{z}))-u(\underline{z})\\
            &\geq u(\underline{y})-u(\underline{x})-L_u\varepsilon
    \end{align*}
    and
    \[
    S_\varphi(\underline{x},\underline{y};\varepsilon)\geq u(\underline{y})-u(\underline{x})-L_u \varepsilon,
    \]
    where $L_u$ is the Lipschitz constant of $u$.
    Letting $\varepsilon\to 0$, we have \[S_\varphi(\underline{x},\underline{y})\geq u(\underline{y})-u(\underline{x}).\]
\end{proof}
 \begin{proposition}\label{mane_conti}
        $S_\varphi(\underline{x},\underline{y})$ is lower semicontinuous on $X\times X$.
    \end{proposition}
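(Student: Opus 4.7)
The plan is to exploit two monotonicity properties. First, as $\varepsilon$ decreases, the set $B(\underline{x},\underline{y},n;\varepsilon)$ shrinks, so the infimum defining $S_\varphi(\underline{x},\underline{y};\varepsilon)$ is non-increasing in $\varepsilon$. Consequently
\[
S_\varphi(\underline{x},\underline{y})=\lim_{\varepsilon\to 0^+}S_\varphi(\underline{x},\underline{y};\varepsilon)=\sup_{\varepsilon>0}S_\varphi(\underline{x},\underline{y};\varepsilon),
\]
which is the key reformulation: a supremum of a family of functions is lower semicontinuous if each member is, so it suffices to control each $S_\varphi(\cdot,\cdot;\varepsilon)$ from above by values at nearby points.

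Second, I would use the triangle inequality to compare balls at different base points. Fix $\varepsilon>0$ and take any sequence $(\underline{x}_k,\underline{y}_k)\to(\underline{x},\underline{y})$. For all $k$ sufficiently large so that $d(\underline{x}_k,\underline{x})<\varepsilon/2$ and $d(\underline{y}_k,\underline{y})<\varepsilon/2$, if $\underline{z}\in B(\underline{x}_k,\underline{y}_k,n;\varepsilon/2)$ then
\[
d(\underline{x},\underline{z})\le d(\underline{x},\underline{x}_k)+d(\underline{x}_k,\underline{z})<\varepsilon,
\]
and similarly $d(\sigma^n(\underline{z}),\underline{y})<\varepsilon$, so $\underline{z}\in B(\underline{x},\underline{y},n;\varepsilon)$. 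Thus $B(\underline{x}_k,\underline{y}_k,n;\varepsilon/2)\subset B(\underline{x},\underline{y},n;\varepsilon)$, and taking the infimum over the larger set (and over $n$) gives
\[
S_\varphi(\underline{x},\underline{y};\varepsilon)\le S_\varphi(\underline{x}_k,\underline{y}_k;\varepsilon/2)\le S_\varphi(\underline{x}_k,\underline{y}_k),
\]
where the second inequality is the monotonicity step from the first paragraph.

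Letting $k\to\infty$ yields $S_\varphi(\underline{x},\underline{y};\varepsilon)\le \liminf_k S_\varphi(\underline{x}_k,\underline{y}_k)$, and then letting $\varepsilon\to 0^+$ and invoking the supremum characterization gives $S_\varphi(\underline{x},\underline{y})\le \liminf_k S_\varphi(\underline{x}_k,\underline{y}_k)$, which is the desired lower semicontinuity on $X\times X$. The argument handles extended real values (including $-\infty$) without modification, so no Lipschitz assumption on $\varphi$ is needed here. There is no genuine obstacle; the only delicate point is being careful with the direction of the monotonicity in $\varepsilon$ (smaller balls produce larger infima) and choosing the two radii $\varepsilon$ and $\varepsilon/2$ so that the inclusion of balls lines up with the supremum representation of $S_\varphi$.
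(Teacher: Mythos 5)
Your proof is correct, and it shares the core mechanism with the paper's argument: monotonicity of $S_\varphi(\cdot,\cdot;\varepsilon)$ in $\varepsilon$ together with an inclusion of $B$-balls obtained from the triangle inequality. The one genuine difference is worth noting. The paper fixes $\underline{x}$, shows $B(\underline{x},\underline{z},n;\varepsilon)\subset B(\underline{x},\underline{y},n;\varepsilon+d(\underline{y},\underline{z}))$, deduces $S_\varphi(\underline{x},\underline{y};\varepsilon+d(\underline{y},\underline{z}))\le S_\varphi(\underline{x},\underline{z};\varepsilon)$, and concludes that $\underline{y}\mapsto S_\varphi(\underline{x},\underline{y})$ is lower semicontinuous for each fixed $\underline{x}$, then remarks that the first variable is handled symmetrically. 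As written, that establishes separate lower semicontinuity in each coordinate, which in general does not imply lower semicontinuity on the product $X\times X$. Your version, by perturbing both arguments simultaneously along a sequence $(\underline{x}_k,\underline{y}_k)\to(\underline{x},\underline{y})$ and using the inclusion $B(\underline{x}_k,\underline{y}_k,n;\varepsilon/2)\subset B(\underline{x},\underline{y},n;\varepsilon)$, gives joint lower semicontinuity directly, which is what the proposition actually asserts. Your observation that the argument needs no Lipschitz hypothesis and is indifferent to extended real values is also correct. So your route is essentially the same, but cleaner on the joint-versus-separate point; the paper's fixed-radius-plus-$d(\underline{y},\underline{z})$ bookkeeping is equivalent to your $\varepsilon/2$ device but handles only one coordinate at a time.
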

    
	\begin{proof}
           Fix $\underline{y}, \underline{z}, \underline{w}\in X$ and
        $\varepsilon>0$.
        Since
    \[
    d(\sigma^n(\underline{w}),\underline{y})\le d(\sigma^n(\underline{w}),\underline{z})+d(\underline{y},\underline{z})\le \varepsilon+d(\underline{y},\underline{z}),
    \]
    it holds that    $B(\underline{x},\underline{y},n;\varepsilon+d(\underline{y},\underline{z})) \supset B(\underline{x},\underline{z},n;\varepsilon)$ and thus we obtain
   \[
     S_{\varphi}(\underline{x},\underline{y};\varepsilon+d(\underline{y},\underline{z}))\le S_{\varphi}(\underline{x},\underline{z};\varepsilon).
   \]

   Moreover, we have
   \[
   \liminf_{\underline{z}\to \underline{y}} \left(\lim_{\varepsilon\to 0} S_{\varphi}(\underline{x},\underline{z};\varepsilon)\right)
   =\liminf_{\underline{z}\to \underline{y}} S_\varphi(\underline{x},\underline{z})
   \]
   and
   \[
   \liminf_{\underline{z}\to \underline{y}} \left(\lim_{\varepsilon\to 0}  S_{\varphi}(\underline{x},\underline{y};\varepsilon+d(\underline{y},\underline{z}))\right)=\lim_{\varepsilon'\to 0} S_{\varphi}(\underline{x},\underline{y};\varepsilon'))=S_\varphi(\underline{x},\underline{y}).
   \]
   Thus it holds that
   \[
   S_\varphi(\underline{x},\underline{y})\le \liminf_{\underline{z}\to \underline{y}} S_\varphi(\underline{x},\underline{z}).
   \]
   Therefore, the map $\underline{y}\mapsto S_\varphi(\underline{x},\underline{y})$ is lower semicontinuous for each $\underline{x}\in X$. Similarly, we can verify that
   the map $\underline{y}\mapsto S_\varphi(\underline{y},\underline{x})$ is also lower semicontinuous for each $\underline{x}\in X$.
    \end{proof}

The following proposition asserts that the Mather set is included in the Aubry set.
\begin{proposition}
	Let $\varphi:X\to\R$ be a Lipschitz function. 
        Then the Mather set $\mathscr{M}_\varphi$ is a subset of
    the Aubry set $\Omega_\varphi$.
\end{proposition}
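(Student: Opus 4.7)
The plan is to establish, for each $\underline{x}\in \mathrm{supp}(\mu)$ with $\mu\in \mathcal{M}_{\min}(\varphi)$, the two inequalities $S_\varphi(\underline{x},\underline{x})\ge 0$ and $S_\varphi(\underline{x},\underline{x})\le 0$. For the lower bound, the hypothesis that $\varphi$ is Lipschitz together with the remark preceding Lemma~\ref{mane_calibrated} supplies a Lipschitz subaction $u$; substituting $\underline{y}=\underline{x}$ into \eqref{mane_calibrated_ineq} gives $S_\varphi(\underline{x},\underline{x})\ge u(\underline{x})-u(\underline{x})=0$ for every $\underline{x}\in X$. In particular $\Omega_\varphi$ is the $0$-sublevel set of $\underline{x}\mapsto S_\varphi(\underline{x},\underline{x})$, which is lower semicontinuous by Proposition~\ref{mane_conti} composed with the continuous diagonal map, hence $\Omega_\varphi$ is closed. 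A standard ergodic decomposition argument---any minimizing $\mu$ has $\mu$-a.e.\ ergodic component in $\mathcal{M}_{\min}(\varphi)$, and $\mathrm{supp}(\mu)$ is contained in the closure of the union of their supports---combined with this closedness reduces the proof to the case of ergodic $\mu\in \mathcal{M}_{\min}(\varphi)$.

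So assume $\mu$ is ergodic, fix $\underline{x}\in \mathrm{supp}(\mu)$ and $\varepsilon>0$, and set $B=B(\underline{x};\varepsilon)$, noting $c:=\mu(B)>0$. Birkhoff's ergodic theorem applied simultaneously to $\varphi$ and $\mathbf{1}_B$ furnishes a point $\underline{y}\in X$ whose forward orbit visits $B$ along return times $0\le n_0<n_1<\cdots$ with $n_K/K\to 1/c$, and for which $\tfrac{1}{n}S_n\varphi(\underline{y})\to \alpha_\varphi$. Writing $\underline{z}_k:=\sigma^{n_k}(\underline{y})\in B$, each $\underline{z}_k$ satisfies $\sigma^{n_{k+1}-n_k}(\underline{z}_k)\in B$, hence $\underline{z}_k\in B(\underline{x},\underline{x},n_{k+1}-n_k;\varepsilon)$. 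The telescoping identity
\[
\sum_{k=0}^{K-1} S_{n_{k+1}-n_k}(\varphi-\alpha_\varphi)(\underline{z}_k)
= S_{n_K}(\varphi-\alpha_\varphi)(\underline{y})-S_{n_0}(\varphi-\alpha_\varphi)(\underline{y})
= o(n_K)=o(K)
\]
then shows that the average of these $K$ segment-sums tends to $0$, so $\liminf_k S_{n_{k+1}-n_k}(\varphi-\alpha_\varphi)(\underline{z}_k)\le 0$. By the definition of $S_\varphi(\underline{x},\underline{x};\varepsilon)$ as an infimum, this gives $S_\varphi(\underline{x},\underline{x};\varepsilon)\le 0$, and letting $\varepsilon\to 0$ gives $S_\varphi(\underline{x},\underline{x})\le 0$.

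The one step requiring a moment of care is the passage from ``average of the segment-sums $\to 0$'' to ``$\liminf \le 0$'', since the individual segment-sums $S_{n_{k+1}-n_k}(\varphi-\alpha_\varphi)(\underline{z}_k)$ need not be bounded below and so one cannot simply take the infimum of a bounded-below sequence. The argument instead runs by contradiction: if $\liminf_k S_{n_{k+1}-n_k}(\varphi-\alpha_\varphi)(\underline{z}_k)>\delta>0$ then the partial sums would eventually grow at least linearly in $K$, contradicting the $o(K)$ bound. This is the main technical point; the rest is essentially a standard combination of Birkhoff's ergodic theorem with Poincar\'e-type return arguments.
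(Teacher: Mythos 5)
Your proof is correct, but it follows a genuinely different route from the paper's for the crucial upper bound $S_\varphi(\underline{x},\underline{x})\le 0$. The paper also reduces to the ergodic case and uses a Lipschitz subaction $u$, but then exploits $u$ much more heavily: it sets $\varphi^u=\varphi-\alpha_\varphi+u-u\circ\sigma$, deduces from $\int\varphi^u\,d\mu=0$, $\varphi^u\ge 0$, and continuity that $\varphi^u\equiv 0$ on $\mathrm{supp}(\mu)$, so that $S_n(\varphi-\alpha_\varphi)(\underline{x})=u(\sigma^n\underline{x})-u(\underline{x})$ is uniformly bounded along orbits in the support; it then invokes Poincar\'e recurrence to find $n_k$ with $d(\underline{x},\sigma^{n_k}\underline{x})<\varepsilon$, getting $S_\varphi(\underline{x},\underline{x};\varepsilon)\le L_u\varepsilon$ for $\mu$-a.e.\ $\underline{x}$, and finishes by lower semicontinuity and density of a.e.\ points. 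You instead bypass the subaction entirely for the upper bound, replacing the coboundary identity by a direct Birkhoff argument: a generic point $\underline{y}$ has linear return-time growth to $B(\underline{x};\varepsilon)$ and sublinear cumulative Birkhoff sums for $\varphi-\alpha_\varphi$, so the telescoped segment-sums between returns cannot have $\liminf$ bounded away from zero. Your version is somewhat more self-contained (the subaction enters only for the lower bound), works directly at every $\underline{x}\in\mathrm{supp}(\mu)$ rather than a.e.\ plus a semicontinuity step, and does not need the Lipschitz constant of $u$; its cost is the small extra bookkeeping on the contradiction step you flag, whereas the paper's coboundary identity makes the segment-sums uniformly bounded, letting Poincar\'e recurrence finish immediately. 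You are also more careful than the paper about the reduction to ergodic $\mu$: making explicit that $\Omega_\varphi$ is closed (via Lemma~\ref{mane_calibrated} and Proposition~\ref{mane_conti}) is exactly the point needed to pass from the ergodic components to $\mathrm{supp}(\mu)$, and the paper leaves this implicit.
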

\begin{proof}
	By the ergodic decomposition, it is sufficient to show that $\mathrm{supp}(\mu)\subset \Omega_\varphi$ for each ergodic $\mu\in\mathcal{M}_{{\rm min}}(\varphi)$.
	Take arbitrary ergodic $\mu\in\mathcal{M}_{{\rm min}}(\varphi)$ and a subaction $u$ for $\varphi$. Note that $\int \varphi d\mu=\alpha_\varphi$.
	Letting $\varphi^u=\varphi-\alpha_\varphi+u-u\circ \sigma$, we obtain
	\begin{align*}
		\int \varphi^u d\mu=\int (\varphi-\alpha_\varphi) d\mu=0
	\end{align*}
	since $\int u\circ \sigma \ d\mu=\int u\  d\mu$ by the $\sigma$-invariance of $\mu$.
	From the definition of $u$, it holds that $\varphi^u\ge 0$, which implies that
	$\varphi^u(\underline{x})=0$ for $\mu$-a.e. $\underline{x}\in X$. Since $\mu$ is ergodic and $\varphi^u$ is continuous, we obtain $\varphi^u(\underline{x})=0$ on $\mathrm{supp}(\mu)$.
	 Hence, we see that
	\[
		S_n(\varphi-\alpha_\varphi)(\underline{x})=u\circ \sigma^n(\underline{x})-u(\underline{x})
	\]
	for each $\underline{x}\in \mathrm{supp}(\mu)$ and $n\in\N$.
	Note that $\sigma$-invariance of $\mathrm{supp}(\mu)$ implies $\sigma^n(\underline{x})\in\mathrm{supp}(\mu)$ for $n\in \N$ if $\underline{x}\in \mathrm{supp}(\mu)$.
	By Poincar\'e's recurrence theorem, for $\mu$-a.e. $\underline{x}$, there exists a monotone increasing sequence $\{n_k\}_{k\in\N}$
	with $n_k\to +\infty$ as $k\to+\infty$ such that $d(\underline{x},\sigma^{n_k}(\underline{x}))<\varepsilon$.
	This implies that, for $\mu$-a.e. $\underline{x}$, we have
	\[
		S_\varphi(\underline{x},\underline{x};\varepsilon)\le S_{n_k}(\varphi-\alpha_\varphi)(\underline{x})=u\circ \sigma^{n_k}(\underline{x})-u(\underline{x})
		\le L_\varphi d(\underline{x},\sigma^{n_k}\underline{x})<L_\varphi \varepsilon,
	\]
	i.e.,
	$S_\varphi(\underline{x},\underline{x})\le 0$.
	Therefore, by the lower semicontinuity of $S_\varphi$ (Proposition~\ref{mane_conti}) and the density of $\mu$-a.e. points in $\mathrm{supp}(\mu)$,
	it holds that $S_\varphi(\underline{x},\underline{x})\le 0$ on $\mathrm{supp}(\mu)$. Using Lemma~\ref{mane_calibrated}, we have $S_\varphi(\underline{x},\underline{x})=0$ on $\mathrm{supp}(\mu)$, which completes the proof.
	\end{proof}

Next, let us define Peierl's barrier.
\begin{definition}[Peierl's barrier]
    For a Lipschitz function $\varphi:X\rightarrow \mathbb{R}$ and $\varepsilon>0$, define $H_\varphi:X\times X\rightarrow \mathbb{R}\cup\{\infty\}$ by
    \begin{align*}
         H_\varphi(\underline{x}, \underline{y};\varepsilon)=\liminf_{n\to \infty}\{S_n(\varphi-\alpha_\varphi)(\underline{z}) \mid \underline{z}\in B(\underline{x},\underline{y},n;\varepsilon)\}
    \end{align*}
    and
    \begin{align*}
        H_\varphi(\underline
        {x},\underline{y})=\lim_{\varepsilon\to 0} H_\varphi(\underline{x},\underline{y};\varepsilon).
    \end{align*}
\end{definition}

\begin{remark}
    Note that the Peierl's barrier defined as above may take $\infty$.
    Indeed, for $\varphi(\underline{x})=x_0$, we see that $H_\varphi(1^\infty, 1^\infty)=\infty$ in the following way.
    Fix $k\geq 1$. Take $n\geq k+1$ and $\underline{z}\in X$ such that $d(1^\infty, \underline{z})<2^{-(k+1)}$ and $d(\sigma^n(\underline{z}),1^\infty)<2^{-(k+1)}$.
    Since $\alpha_\varphi=0$, we have
    \begin{align*}
        S_n\varphi(\underline{z})-n\alpha_\varphi
            &=\sum_{i=0}^{k-1} (z_i-1)+\sum_{i=0}^{k-1}(1-\alpha_\varphi)+\sum_{i=k}^{n-1}(z_i-\alpha_\varphi)\\
            &\geq -1+k
    \end{align*}
    and $H_\varphi(1^\infty,1^\infty;2^{-k})\geq -1+k$ for all $k\geq 1$.
    Letting $k\to\infty$, we have $H_\varphi(1^\infty,1^\infty)=\infty$.
    Similarly, we can show that any point $(\underline{x},\underline{y})\in X\times X$ of the form $(\underline{x},\underline{y})=(a_0\ldots a_l 1^\infty,b_0\ldots b_m 1^\infty)$ provides $H_\varphi(\underline{x},\underline{y})=\infty$. Note that any cylinder set
    \[
    [a_0,\ldots,a_l]\times [b_0,\ldots,b_m]=\{(\underline{x},\underline{y}) \mid x_i=a_i\ (i=0,\ldots,l), y_j=b_j\ (j=0,\ldots,m)\}
    \]
    contains $\{(a_1\ldots a_l 1^\infty,b_1\ldots b_m 1^\infty)\}$ and this implies that
    the function
    \[
    (\underline{x},\underline{y})\in X\times X\mapsto H_\varphi(\underline{x},\underline{y})\in \R\cup\{+\infty\}
    \]
    takes $+\infty$ on a dense set in $X\times X$
    for the case $\varphi(\underline{x})=x_0$.
    We remark that a similar phenomenon occurs even for a subshift with a finite alphabet, a point which seems to have been not explicitly discussed in the literature, e.g., \cite{BLL13}. This omission, however, does not affect other arguments in \cite{BLL13}. Below, we provide a sufficient condition for the finiteness of the Peierls barrier.
\end{remark}
  Now let us prove a part of Main Theorem~\ref{action_minimizing_aubry}. Note that the first equality of Main Theorem~\ref{action_minimizing_aubry} says that the Aubry set defined in Definition~\ref{aubry_set} coincides with the zero-level set derived from Peierl’s barrier.
\begin{theorem}[cf. Main Theorem~\ref{action_minimizing_aubry}]
\label{theorem:H_finite}
    Let $\varphi:X\rightarrow \mathbb{R}$ be a Lipschitz function.
    For $\underline{x}\in \Omega_\varphi$ and $\underline{y}\in X$ we have 
    $H_\varphi(\underline{x},\underline{y})\leq L_\varphi d(\underline{x},\underline{y})$.
    In particular, $\underline{x}\in \Omega_\varphi$ holds if and only if 
    $H_\varphi(\underline{x},\underline{x})=0$.
\end{theorem}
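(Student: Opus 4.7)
The plan is to establish the inequality $H_\varphi(\underline{x}, \underline{y}) \leq L_\varphi d(\underline{x}, \underline{y})$ by a splicing construction tailored to the XY shift, and then to derive the biconditional by combining it with the subaction lower bound of Lemma~\ref{mane_calibrated}.

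The cornerstone is an elementary contraction estimate specific to the metric on $X$: if $\underline{z}_1, \underline{z}_2 \in X$ agree in their first $n$ coordinates, then the geometric weighting in the definition of $d$ yields $d(\sigma^i \underline{z}_1, \sigma^i \underline{z}_2) = 2^{i-n} d(\sigma^n \underline{z}_1, \sigma^n \underline{z}_2)$ for $0 \leq i \leq n-1$, and summing the resulting geometric series gives
\[
  |S_n \varphi(\underline{z}_1) - S_n \varphi(\underline{z}_2)| \leq L_\varphi\, d(\sigma^n \underline{z}_1, \sigma^n \underline{z}_2).
\]
This estimate will let me ``redirect'' the tail of an approximate loop at $\underline{x}$ toward $\underline{y}$ at a cost controlled by $d(\underline{x}, \underline{y})$.

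I next need a dichotomy: for every $\underline{x} \in \Omega_\varphi$ and every $\varepsilon > 0$ there is a sequence $(n_k, \underline{w}^{(k)}, \varepsilon_k)$ with $n_k \to \infty$, $\varepsilon_k \to 0$, $\underline{w}^{(k)} \in B(\underline{x}, \underline{x}, n_k; \varepsilon_k)$, and $S_{n_k}(\varphi - \alpha_\varphi)(\underline{w}^{(k)}) \to 0$. From $S_\varphi(\underline{x}, \underline{x}) = 0$ one extracts a minimizing approximation, for which either $n_k \to \infty$ along a subsequence (and we are done), or the $n_k$'s stay bounded. In the latter case, pigeonhole together with compactness of $X$ and continuity of $\sigma$ force $\underline{x}$ to be periodic of some period $n_0$ with $S_{n_0}(\varphi - \alpha_\varphi)(\underline{x}) = 0$, after which the trivial ``loops'' $\underline{w}^{(k)} = \underline{x}$ of length $k n_0$ serve. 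This dichotomy is the step I expect to require the most care.

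With these two ingredients the upper bound is a short computation. Given $\underline{w}^{(k)}$ as above, define $\underline{z}^{(k)}$ to agree with $\underline{w}^{(k)}$ in coordinates $0, \ldots, n_k - 1$ and with $\underline{y}$ thereafter, so $\sigma^{n_k} \underline{z}^{(k)} = \underline{y}$ and $d(\underline{x}, \underline{z}^{(k)}) \leq \varepsilon_k + 2^{1-n_k}$. The contraction estimate then gives
\[
  S_{n_k}(\varphi - \alpha_\varphi)(\underline{z}^{(k)}) \leq S_{n_k}(\varphi - \alpha_\varphi)(\underline{w}^{(k)}) + L_\varphi \bigl(d(\underline{x}, \underline{y}) + \varepsilon_k\bigr),
\]
and taking $\liminf_{k \to \infty}$ inside the definition of $H_\varphi(\underline{x}, \underline{y}; \varepsilon)$ followed by $\varepsilon \to 0$ yields $H_\varphi(\underline{x}, \underline{y}) \leq L_\varphi d(\underline{x}, \underline{y})$. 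For the ``in particular'' clause, the forward implication specializes this bound to $\underline{y} = \underline{x}$ and combines it with the lower bound $H_\varphi(\underline{x}, \underline{x}) \geq u(\underline{x}) - u(\underline{x}) = 0$ obtained by repeating the proof of Lemma~\ref{mane_calibrated} inside the $\liminf$; the reverse implication uses the termwise inequality $S_\varphi \leq H_\varphi$ (since an infimum over all $n$ is at most a $\liminf$) together with the nonnegativity $S_\varphi(\underline{x}, \underline{x}) \geq 0$ from Lemma~\ref{mane_calibrated}.
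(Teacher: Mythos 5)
Your proof is correct, and its core mechanism is the same as the paper's: both splice the tail of a near-optimal loop at $\underline{x}$ onto $\underline{y}$, both rely on the geometric contraction estimate $|S_n\varphi(\underline{z}_1)-S_n\varphi(\underline{z}_2)|\le L_\varphi\,d(\sigma^n\underline{z}_1,\sigma^n\underline{z}_2)$ for sequences sharing their first $n$ coordinates, and both obtain the biconditional from $H_\varphi\ge S_\varphi\ge 0$ via Lemma~\ref{mane_calibrated}. Where you diverge is in the case analysis producing arbitrarily long near-optimal loops. The paper first distinguishes whether $H_\varphi(\underline{x},\underline{x};\varepsilon)=S_\varphi(\underline{x},\underline{x};\varepsilon)$ can hold for small $\varepsilon$; when it cannot, it notes the Ma\~n\'e infimum is realized at a finite $N(\varepsilon)$ and splits again on whether $N(\varepsilon_i)$ stays bounded, telescoping a concatenation of loops $(z_0^{(1)}\cdots z^{(1)}_{M_1-1})(z_0^{(2)}\cdots z^{(2)}_{M_2-1})\cdots$ before appending $\underline{y}$ in the unbounded subcase. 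Your two-branch dichotomy (the lengths of a near-optimal loop sequence tend to infinity along a subsequence, or they stay bounded, in which case pigeonhole and continuity of $\sigma$ and of $S_M\varphi$ force $\sigma^M\underline{x}=\underline{x}$ and $S_M(\varphi-\alpha_\varphi)(\underline{x})=0$) merges the paper's Case~1 and the unbounded part of its Case~2 into a single branch and dispenses with the concatenation altogether; the bounded case is handled identically in both. Isolating the contraction estimate as a preliminary lemma rather than repeating it inline is a further bookkeeping improvement. Net result: same idea, noticeably leaner case structure.
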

\begin{proof}
    The second claim immediately follows from the first claim and
    \[
    H_\varphi(\underline{x}',\underline{x}')\geq S_\varphi(\underline{x}',\underline{x}')\geq 0
    \]
    for all $\underline{x}'\in X$ by Lemma~\ref{mane_calibrated}.
    Now we prove the first claim.
    Since $\underline{x}\in\Omega_\varphi$,
    we have $S_\varphi(\underline{x},\underline{x})=0$.
    It suffices to consider the following two cases:
    \begin{enumerate}
        \item For any $\theta>0$, there exists $\varepsilon \in (0,\theta)$ such that
        \begin{align*}
           H_\varphi(\underline{x}, \underline{x};\varepsilon)
           =S_\varphi(\underline{x}, \underline{x};\varepsilon)
        \end{align*}
        \item There exists $\theta>0$ such that for any $\varepsilon \in (0, \theta)$, there exists a finite number $N=N(\varepsilon)$ such that 
        \begin{align*}
            S_\varphi(\underline{x},\underline{x};\varepsilon)=\inf\{S_N(\varphi-\alpha_\varphi)(\underline{z}):\underline{z}\in B(\underline{x},\underline{x},N;\varepsilon)\}.
        \end{align*}
    \end{enumerate}

    The proof of Case $(1)$:
    Fix $\theta>0$.
    By (1) and $S_\varphi(\underline{x},\underline{x})=0$, we may assume 
    there exists $\varepsilon \in (0,\theta/2)$ such that $H_\varphi(\underline{x},\underline{x};\varepsilon)=S_\varphi(\underline{x},\underline{x};\varepsilon)<\theta/2$.
    Hence there exist
    an increasing sequence $\{n_i\}$ with $2^{-n_1}<\varepsilon$ and $\underline{z}^{(i)}\in B(\underline{x},\underline{x},n_i;\varepsilon)$ such that 
    $S_{n_i}(\varphi-\alpha_\varphi)(\underline{z}^{(i)}) <\theta$.
    Define $\underline{w}^{(n_i)}\in X$ by
    \begin{align*}
        \underline{w}^{(n_i)}=z_0^{(n_i)}z_1^{(n_i)}\cdots z_{n_i-1}^{(n_i)} \underline{y}.
    \end{align*}
    Then for each $i\geq1$ we have
    \begin{align*}
        S_{n_i}(\varphi-\alpha_\varphi)(\underline{w}^{(n_i)})
        &=S_{n_i}(\varphi-\alpha_\varphi)(\underline{z})+\sum_{i=0}^{n_i-1}(\varphi\circ \sigma^i(\underline{w}^{(n_i)})-\varphi\circ\sigma^i (\underline{z}))\\
        &\leq \theta +L_\varphi \sum_{i=0}^{n_i-1}d(\sigma^i(\underline{w}^{(n_i)}),\sigma^i(\underline{z}))\\
        &\leq \theta +L_\varphi \sum_{i=1}^{n_i} \frac{d(\underline{y},\sigma^{n_i} \underline{z})}{2^{i}}\\
        &\leq \theta +L_\varphi (d(\underline{x},\underline{y} )+\varepsilon)\\
        &<(1+L_\varphi/2)\theta+L_\varphi d(\underline{x},\underline{y}).
    \end{align*}
    It is easy to see $d(\underline{x},\underline{w}^{(n_i)})\leq 2\varepsilon$, $d(\sigma^{n_i} \underline{w}^{(n_i)},\underline{y})=0$ and $\underline{w}^{(n_i)}\in B(\underline{x},\underline{y},n_i;2\varepsilon)\subset B(\underline{x},\underline{y},n_i;\theta)$.
    Hence we have
    \begin{align*}
        H_\varphi(\underline{x},\underline{y};\theta)\leq (1+L_\varphi/2)\theta+L_\varphi d(\underline{x},\underline{y}).
    \end{align*}
    Then letting $\theta\to0$ we have
    \begin{align*}
        H_\varphi(\underline{x},\underline{y})\leq L_\varphi d(\underline{x},\underline{y}).
    \end{align*}

    The proof of Case $(2)$:
    Fix $\varepsilon \in (0, \theta)$.
    Set a monotone decreasing positive sequence $\{\varepsilon_i\}$ satisfying \[\sum_{i \in \N} \varepsilon_i < \varepsilon.\]
    For $\{\varepsilon_i\}$, we define a positive integer sequence $\{N_i\}$ by $N_i = N(\varepsilon_i)$.
    
     If $\{N_i\}$ is bounded, 
    then there exist an integer $M$ and an infinite subsequence $\{N_{i_j}\}$ such that $M=N_{i_j}$ for any $j \in \N$.
    Then we can take a sequence $\{\underline{z}^{(j)}\}$
    such that $\underline{z}^{(j)} \in B(\underline{x},\underline{x},M;\varepsilon_{i_j})$
    and
    \[
    \lim_{j \to \infty} S_{M}(\varphi-\alpha_{\varphi})(\underline{z}^{(j)}) = 0
    \]
    since $\underline{x} \in \Omega_{\varphi}$ and $\varepsilon_{i_j} \to 0$ as $j \to \infty$.
    This yields $\sigma^{M}(\underline{x})=\underline{x}$
    because if $d(\sigma^{M}(\underline{x}),\underline{x}) > \delta$
    for some $\delta>0$, then for $\varepsilon_{i_j}< \delta2^{-(M+2)}$, we obtain
    \[
    \delta< d(\sigma^{M}(\underline{x}), \sigma^{M}\underline{z}^{(j)}) + d(\sigma^{M}\underline{z}^{(j)}, \underline{x})
    <2^{M+1}d(\sigma^{M}\underline{z}^{(j)}, \underline{x})< \delta/2,
    \]    
    which is contradiction.
    Then we get
     \begin{align*}
        |S_{M}(\varphi-\alpha_{\varphi})(\underline{x})-S_{M}(\varphi-\alpha_{\varphi})(\underline{z}^{(j)})|
        &\le L_{\varphi} \sum_{k=0}^{M-1} d(\sigma^k(\underline{x}), \sigma^{k}(\underline{z}^{(j)}))\\
        &\le L_{\varphi} \sum_{k=0}^{M-1} 2^kd(\underline{x}, \underline{z}^{(j)})
        <L_{\varphi}2^M\varepsilon_{i_j}.
    \end{align*}
    Combining this inequality and $\varepsilon_{i_j} \to 0$ as $j \to \infty$, we have
    \[
    S_{M}(\varphi-\alpha_{\varphi})(\underline{x})=0.
    \]
    For a $M$-periodic sequence $\underline{x}$, we define 
    $\underline{w}^{(k)}$ by
    \[
    w^{(k)} = (x_0 \cdots x_{M-1})^k \underline{y}.
    \]
     Notice that for $n_k = kM$,
    \[
    S_{n_k}(\varphi-\alpha_{\varphi})(\underline{x})=0
    \]
    and we get
    \begin{align*}
        S_{n_k}(\varphi-\alpha_{\varphi})(\underline{w}^{(k)})
         &=S_{n_k}(\varphi-\alpha_{\varphi})(\underline{w}^{(k)})
        -S_{n_k}(\varphi-\alpha_{\varphi})(\underline{x})\\
        &\le L_{\varphi} \sum_{j=0}^{n_k-1} d(\sigma^{j}(\underline{w}^{(k)}),\sigma^j(\underline{x}))\\
        & \le  L_{\varphi}  d(\underline{y},\underline{x}).
    \end{align*}
    Since $\underline{w}^{(k)}\in B(\underline{x},\underline{y},n_k;2^{-n_k})$ for every $k\geq1$, we have $H_\varphi(\underline{x},\underline{y})\leq L_\varphi d(\underline{x},\underline{y})$.
    
    Next, we assume that $\{N_i\}$ is not bounded.
    Then we can take an increasing subsequence $\{N_{i_j}\}$ with $\max\{2^{-N_{i_j}},\varepsilon_{i_j}\}<\varepsilon_{j}$ and  a sequence $\underline{z}^{(j)}$ satisfying
    \begin{align}
        S_{N_{i_j}}(\varphi-\alpha_{\varphi})(\underline{z}^{(j)})< \frac{ \varepsilon}{2^j}
        \label{take_z}
    \end{align}
    and $\underline{z}^{(j)} \in B(\underline{x},\underline{x},N_{i_j}; \varepsilon_{i_j})$
    for any $j \in \N$.
    Set $\{M_j\}_{j \ge 0}$ by $M_0=0$ and $M_j=N_{i_j}$.
    Set $\underline{w}^{(n)}$ by
    \[
    \underline{w}^{(n)}
    =(z_0^{(1)}\cdots z^{(1)}_{M_1-1})
    (z_0^{(2)}\cdots z^{(2)}_{M_2-1})
    \cdots (z_0^{(n)}\cdots z^{(n)}_{M_n-1}) \underline{y}.
    \]
    Notice that for $j \le n$ 
    \[
    d( \sigma^{M_{j-1}+\cdots+M_0}(\underline{w}^{(n)}), \underline{z}^{(j)})
    \leq 2^{-N_{i_j}}
    <\varepsilon_j
    \]
    since the first $M_j(=N_{i_j})$ coordinates of $\sigma^{M_{j-1}+\cdots M_0}\underline{w}^{(n)}$ coincide with that of $\underline{z}^{(j)}$.
    Let $m_k=M_1 + \cdots + M_k$.
    For any $k$,
    \begin{align*}
    &S_{m_k}(\varphi-\alpha_{\varphi})(\underline{w}^{(k)})
    =\sum_{j=1}^{k-1}
    S_{M_j}(\varphi-\alpha_{\varphi})(\sigma^{{M_{j-1}+\cdots+M_0}}(\underline{w}^{(k)}))\\
    &=\sum_{j=1}^{k} S_{M_j}(\varphi-\alpha_{\varphi})(\underline{z}^{(j)})\\
    &\quad+\sum_{j=1}^{k}
    S_{M_j}(\varphi-\alpha_{\varphi})(\sigma^{M_{j-1}+\cdots+M_0}(\underline{w}^{(k)}))
    -S_{M_j}(\varphi-\alpha_{\varphi})(\underline{z}^{(j)}).
    \end{align*}
    Here by \eqref{take_z}
    \[
    \sum_{j=1}^{k} S_{M_j}(\varphi-\alpha_{\varphi})(\underline{z}^{(j)})
    \leq \sum_{j=1}^k\frac{\varepsilon}{2^i}< \varepsilon
    \]
    for any $k$.
    If $j<k$,
    \begin{align*}
    &S_{M_j}(\varphi-\alpha_{\varphi})(\sigma^{{M_{j-1}+\cdots+M_0}}(\underline{w}^{(k)}))
    -S_{M_j}(\varphi-\alpha_{\varphi})(\underline{z}^{(j)})\\
    &= \sum_{i=0}^{M_j} \left(\varphi(\sigma^{i +{M_{j-1}+\cdots+M_0}}(\underline{w}^{(k)}))
    -\varphi(\sigma^{i}(\underline{z}^{(j)}))\right)\\
    &\le 
    L_{\varphi}  \sum_{i=0}^{M_j} d (\sigma^{i +M_{j-1}+\cdots+M_0}(\underline{w}^{(k)}), \sigma^{i}(\underline{z}^{(j)}))\\
    &\le L_{\varphi}  \sum_{i=1}^{M_j} \frac{1}{2^i}d (\sigma^{M_j +M_{j-1}+\cdots +M_0}(\underline{w}^{(k)}), \sigma^{M_j}(\underline{z}^{(j)}))\\
    &\le L_{\varphi} d (\sigma^{M_j +M_{j-1}+\cdots+M_0}(\underline{w}^{(k)}), \sigma^{M_j}(\underline{z}^{(j)})\\
    &\le
     L_{\varphi} (d (\sigma^{M_j +M_{j-1}+\cdots+M_0}(\underline{w}^{(k)}), \underline{z}^{(j+1)})
     +d (\underline{z}^{(j+1)},\underline{x})
     +d(\underline{x},\sigma^{M_j}(\underline{z}^{(j)})))\\
     &< L_{\varphi}(\varepsilon_{j+1} + \varepsilon_{i_{j+1}} + \varepsilon_{i_j})
     <2L_\varphi\varepsilon.
    \end{align*}
    If $j=k$,
    Thus we get:
    \begin{align*}
        &S_{M_k}(\varphi-\alpha_{\varphi})(\sigma^{{M_{k-1}+\cdots+M_0}}(\underline{w}^{(k)}))
    -S_{M_k}(\varphi-\alpha_{\varphi})(\underline{z}^{(k)})\\
    &\le L_\varphi \sum_{i=1}^{n} \frac{1}{2^{i}}d(\underline{y},\sigma^{M_k} (\underline{z}^{(k)}))\\
    &\le  L_\varphi (d(\underline{x},\underline{y}) + d(\underline{x},\sigma^{M_k} (\underline{z}^{(k)})))\\
    &\le L_\varphi d(\underline{x},\underline{y})+L_\varphi \varepsilon_{i_k}\\
    &<L_\varphi d(\underline{x},\underline{y})+L_\varphi \varepsilon.
    \end{align*}
    Hence we get
    \begin{align*}
    S_{m_k}(\varphi-\alpha_{\varphi})(\underline{w}^{(k)})
    &<\varepsilon +  3L_{\varphi} \varepsilon +  L_\varphi d(\underline{x},\underline{y}).
    \end{align*}
    Since $\underline{w}^{(k)}\in B(\underline{x},\underline{y},m_k;\varepsilon_{i_1})\subset B(\underline{x},\underline{y},m_k;\varepsilon)$ for every $k$, we have
    \begin{align*}
        H_\varphi(\underline{x},\underline{y};\varepsilon)\leq (1+3L_\varphi)\varepsilon+L_\varphi d(\underline{x},\underline{y}).
    \end{align*}
    Letting $\varepsilon\to0$, we have
    \[
    H_{\varphi}(\underline{x},\underline{y}) \le L_{\varphi}d(\underline{x},\underline{y}).
    \]
\end{proof}

Next, we prove the first half of Main Theorem~\ref{theorem:cal_equiv}.
\begin{theorem}[Analogy of Theorem 4.1 in \cite{BLL13}, cf. Main Theorem~\ref{theorem:cal_equiv}]
    \label{property_Peierl}
    For any $\underline{x}\in\Omega_\varphi$, the map $X\ni \underline{y}\mapsto H_\varphi(\underline{x},\underline{y})$ is a Lipschitz calibrated subaction.
\end{theorem}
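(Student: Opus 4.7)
The plan is to verify the three required properties of $u(\underline{y}) := H_\varphi(\underline{x}, \underline{y})$ in turn: it is real-valued and $L_\varphi$-Lipschitz (hence continuous), satisfies the subaction inequality, and is calibrated. Finiteness is immediate: Theorem~\ref{theorem:H_finite} supplies the upper bound $u(\underline{y}) \le L_\varphi d(\underline{x}, \underline{y})$, while Lemma~\ref{mane_calibrated} applied to a Lipschitz subaction $w$ (whose existence is guaranteed by the remark after Definition~\ref{calibrated_subaction}) supplies the lower bound $u(\underline{y}) \ge S_\varphi(\underline{x}, \underline{y}) \ge w(\underline{y}) - w(\underline{x})$. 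For the Lipschitz estimate itself, I will use a truncate-and-reglue construction: given $\underline{y}, \underline{y}' \in X$ and $\underline{z} \in B(\underline{x}, \underline{y}, n; \varepsilon)$, define $\underline{z}' := z_0 z_1 \cdots z_{n-1} \underline{y}'$, so that $\sigma^n \underline{z}' = \underline{y}'$ exactly. A direct calculation gives $d(\sigma^i \underline{z}', \sigma^i \underline{z}) = 2^{-(n-i)} d(\underline{y}', \sigma^n \underline{z})$ for $0 \le i < n$, whence $\sum_{i=0}^{n-1} d(\sigma^i \underline{z}', \sigma^i \underline{z}) \le d(\underline{y}, \underline{y}') + \varepsilon$ and so $|S_n(\varphi - \alpha_\varphi)(\underline{z}') - S_n(\varphi - \alpha_\varphi)(\underline{z})| \le L_\varphi(d(\underline{y}, \underline{y}') + \varepsilon)$; moreover $d(\underline{x}, \underline{z}') \le \varepsilon + 2^{-n}(d(\underline{y}, \underline{y}') + \varepsilon)$, so for $n$ large $\underline{z}' \in B(\underline{x}, \underline{y}', n; 2\varepsilon + d(\underline{y}, \underline{y}'))$. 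Taking infima, passing to the $\liminf$ in $n$, and letting $\varepsilon \to 0$ appropriately yields $u(\underline{y}') \le u(\underline{y}) + L_\varphi d(\underline{y}, \underline{y}')$, and symmetrizing gives $L_\varphi$-Lipschitzness.

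For the subaction inequality, I exploit the inclusion $B(\underline{x}, \underline{y}, n; \varepsilon) \subset B(\underline{x}, \sigma \underline{y}, n+1; 2\varepsilon)$, which follows at once from $d(\sigma^{n+1} \underline{z}, \sigma \underline{y}) \le 2 d(\sigma^n \underline{z}, \underline{y})$. For any $\underline{z}$ in the smaller ball, the identity $S_{n+1}(\varphi - \alpha_\varphi)(\underline{z}) = S_n(\varphi - \alpha_\varphi)(\underline{z}) + \varphi(\sigma^n \underline{z}) - \alpha_\varphi$ together with $|\varphi(\sigma^n \underline{z}) - \varphi(\underline{y})| \le L_\varphi \varepsilon$ gives, after passing to the infimum in $\underline{z}$ and then the $\liminf$ in $n$,
\[
H_\varphi(\underline{x}, \sigma \underline{y}; 2\varepsilon) \le H_\varphi(\underline{x}, \underline{y}; \varepsilon) + \varphi(\underline{y}) - \alpha_\varphi + L_\varphi \varepsilon,
\]
and letting $\varepsilon \to 0$ produces the desired $u(\sigma \underline{y}) + \alpha_\varphi \le u(\underline{y}) + \varphi(\underline{y})$.

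Calibration is where I expect the main obstacle. The subaction inequality applied to every preimage of $\underline{y}$ immediately gives $\inf_{\sigma \underline{z} = \underline{y}} (\varphi(\underline{z}) + u(\underline{z})) \ge u(\underline{y}) + \alpha_\varphi$, so I must produce a preimage realizing equality. I will select sequences $\varepsilon_k \downarrow 0$, $n_k \to \infty$, and $\underline{z}^{(k)} \in B(\underline{x}, \underline{y}, n_k; \varepsilon_k)$ with $S_{n_k}(\varphi - \alpha_\varphi)(\underline{z}^{(k)}) \to u(\underline{y})$, then set $\underline{w}^{(k)} := \sigma^{n_k - 1} \underline{z}^{(k)}$. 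Compactness of $X$ lets me extract a subsequential limit $\underline{w}^\ast$, and $\sigma \underline{w}^\ast = \underline{y}$ because $d(\sigma \underline{w}^{(k)}, \underline{y}) = d(\sigma^{n_k} \underline{z}^{(k)}, \underline{y}) < \varepsilon_k \to 0$ together with continuity of $\sigma$. The decomposition
\[
S_{n_k}(\varphi - \alpha_\varphi)(\underline{z}^{(k)}) = S_{n_k - 1}(\varphi - \alpha_\varphi)(\underline{z}^{(k)}) + \varphi(\underline{w}^{(k)}) - \alpha_\varphi
\]
together with the observation $\underline{z}^{(k)} \in B(\underline{x}, \underline{w}^\ast, n_k - 1; \varepsilon_k + d(\underline{w}^{(k)}, \underline{w}^\ast))$ should yield, via the nested-limit definition of $H_\varphi(\underline{x}, \underline{w}^\ast)$ and continuity of $\varphi$, the bound $u(\underline{w}^\ast) + \varphi(\underline{w}^\ast) \le u(\underline{y}) + \alpha_\varphi$, so that the infimum is attained at $\underline{w}^\ast$ and is in fact a minimum. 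The technical care lies in rigorously interchanging the $\liminf$ in $n$ with the outer $\delta \to 0$ limit defining Peierl's barrier at this step, an argument that should proceed analogously to the finite-alphabet case treated in \cite{BLL13}.
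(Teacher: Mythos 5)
Your proposal is correct and mirrors the paper's proof closely: the same truncate-and-reglue construction $\underline{z}'=z_0\cdots z_{n-1}\underline{y}'$ appears as the paper's $\underline{z}^{(n)}$, the subaction inequality comes from the ball inclusion $B(\underline{x},\underline{y},n;\varepsilon)\subset B(\underline{x},\sigma\underline{y},n+1;2\varepsilon)$ as in the paper, and extracting a convergent subsequence of $\sigma^{n_k-1}\underline{z}^{(k)}$ is equivalent to the paper's choice $\underline{y}'=z\underline{y}$ (since $\sigma\underline{w}^*=\underline{y}$ forces $\underline{w}^*=w^*_0\underline{y}$).

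One local slip in the Lipschitz step: you assert $\underline{z}'\in B(\underline{x},\underline{y}',n;2\varepsilon+d(\underline{y},\underline{y}'))$, but with that radius passing to $\liminf_n$ and $\varepsilon\to 0$ only gives $H_\varphi(\underline{x},\underline{y}';d)\le H_\varphi(\underline{x},\underline{y})+L_\varphi d$, which does not yield the Lipschitz estimate since $H_\varphi(\underline{x},\underline{y}';d)\le H_\varphi(\underline{x},\underline{y}')$ is the wrong direction. The right observation, which your own bound $d(\underline{x},\underline{z}')\le\varepsilon+2^{-n}(d(\underline{y},\underline{y}')+\varepsilon)$ already contains, is that for $n$ sufficiently large $\underline{z}'\in B(\underline{x},\underline{y}',n;2\varepsilon)$; since $\liminf_n$ ignores finitely many $n$, this gives $H_\varphi(\underline{x},\underline{y}';2\varepsilon)\le H_\varphi(\underline{x},\underline{y};\varepsilon)+L_\varphi(d(\underline{y},\underline{y}')+\varepsilon)$, and letting $\varepsilon\to 0$ closes the argument.
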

\begin{proof}
    Note that $H_\varphi(\underline{x},\cdot):X\to\R\cup\{+\infty\}$ does not take $+\infty$ by Theorem~\ref{theorem:H_finite} and $\underline{x}\in\Omega_\varphi$.
    We first check the Lipschitz property.
    Fix $\varepsilon>0$.
    Take sequences $\{\underline{w}^{(n)}\}, \{ \underline{w}'^{(n)}\} \subset X$ and a monotone increasing sequence $\{N_n\}$ satisfying
    \begin{align*}
    \lim_{n\to\infty} S_{N_n}(\varphi - \alpha_\varphi)(\underline{w}^{(n)}) &= H_\varphi(\underline{x},\underline{y};\varepsilon), \\
    \underline{w}^{(n)} &\in B(\underline{x},\underline{y},N_n; \varepsilon),
    \end{align*}
    and  
    \begin{align*}
    \lim_{n\to\infty} S_{N_n}(\varphi - \alpha_\varphi)(\underline{w}'^{(n)}) &= H_\varphi(\underline{x},\underline{y}';2\varepsilon), \\
    \underline{w}'^{(n)} &\in B(\underline{x},\underline{y}',N_n; 2\varepsilon)
    \end{align*}
    respectively.
    Set
    \[
    \underline{z}^{(n)}=w_0^{(n)}w_1^{(n)} \cdots w_{n-1}^{(n)}\underline{y}'.
    \]
    Since $\underline{z}^{(n)} \in B(\underline{x},\underline{y}', N_n;2\varepsilon)$,
    (by replacing a subsequence if necessary)
    we take $N$ such that if $n \ge N$,
    \[
    S_{N_n}(\varphi-\alpha_\varphi)(\underline{z}^{({n})})
    \ge S_{N_n}(\varphi-\alpha_\varphi)(\underline{w}'^{({n})})-\varepsilon.
    \]
    Then we obtain
    \begin{align*}
    S_{N_n}(\varphi-\alpha_\varphi)(\underline{w}^{(n)})
    &=S_{N_n}(\varphi-\alpha_\varphi)(\underline{z}^{(n)})+
        \sum_{i=0}^{N_n-1}(\varphi(\sigma^i(\underline{w}^{(n)}))-\varphi(\sigma^i(\underline{z}^{(n)})))\\
        &\ge S_{N_n}(\varphi-\alpha_\varphi)(\underline{z}^{(n)})
        -L_{\varphi} d(\sigma^{N_n}(\underline{w}^{(n)}),y')\\
        &\ge S_{N_n}(\varphi-\alpha_\varphi)(\underline{w}'^{(n)})-\varepsilon
        -L_{\varphi}(d(y,y') + d(\sigma^{N_n}(\underline{w}^{(n)}),y))\\
        &\ge S_{N_n}(\varphi-\alpha_\varphi)(\underline{w}'^{(n)})
        -L_{\varphi}d(y,y') - (L_{\varphi}+1) \varepsilon.
    \end{align*}
    Letting $n\to\infty$ and $\varepsilon\to0$, we have 
    \begin{align*}
        H_\varphi(\underline{x},\underline{y'})-H_\varphi(\underline{x},\underline{y})\leq L_\varphi(\underline{y},\underline{y'}).
    \end{align*}
    Since the opposite inequality can be obtained by swapping the roles of $\{\underline{w}^{(n)}\}$ and $\{\underline{w'}^{(n)}\}$, the map $X \ni \underline{y} \mapsto H_\varphi(\underline{x}, \underline{y})$ is Lipschitz for any $\underline{x} \in \Omega_{\varphi}$.
    
    Next, we check the property of calibrated subaction.
    Fix $\varepsilon>0$ and
    we take a sequence $\underline{w}^{(k)}$ and a monotone increasing sequence $\{n_k\}$ such that
    \[
    \underline{w}^{(k)} \in B(\underline{x}, \underline{y},n_k;\varepsilon)
    \]
    satisfying
    \[
    H_{\varphi}(\underline{x},\underline{y},\varepsilon)
    =\lim_{n_k\to\infty} S_{n_k}(\varphi-\alpha_{\varphi})(\underline{w}^{(k)}).
    \]
    Here, we can assume that $H_{\varphi}(\underline{x},\underline{y},\varepsilon)$ is finite since $\underline{x} \in \Omega_{\varphi}$.
    Notice that
    \[
    \underline{w}^{(k)} \in B(\underline{x}, \sigma\underline{y},n_k+1;2\varepsilon).
    \]
    Taking sufficiently large $N$, for any $k \ge N$,
    \begin{align*}
        &H_{\varphi}(\underline{x},\underline{y},\varepsilon) + \varepsilon\\
        &>
        S_{n_k+1}(\varphi-\alpha_{\varphi})(\underline{w}^{(k)}) -
        (\varphi(\sigma^{n_k}\underline{w}^{(k)})-\alpha_{\varphi})\\
        &>S_{n_k+1}(\varphi-\alpha_{\varphi})(\underline{w}^{(k)}) -
        (\varphi(\underline{y})-\alpha_{\varphi})-L_{\varphi}\varepsilon\\
        &>H_{\varphi}(\underline{x},\sigma\underline{y},2\varepsilon) -\varepsilon -
        (\varphi(\underline{y})-\alpha_{\varphi})-L_{\varphi}\varepsilon
    \end{align*}
    Hence we get
    \[
    H_{\varphi}(\underline{x},\sigma(\underline{y}))
    \le H_{\varphi}(\underline{x},\underline{y}) + \varphi(\underline{y})-\alpha_{\varphi}.
    \]
    for any $\underline{y} \in X$,
    and it implies
     \begin{align*}
        H_\varphi(\underline{x}, {\underline{y}})\leq \min_{\hat{\underline{y}} \in \sigma^{-1}\{{\underline{y}}\} }\{H_\varphi(\underline{x}, \hat{\underline{y}})+\varphi(\hat{\underline{y}})-\alpha_\varphi\}.
    \end{align*}

    To show the opposite inequality,
    let $z\in [0,1]$ be a limit of a convergent subsequence $\{w^{(k_j)}_{k_j-1}\}$ of $ \{w^{(k)}_{k-1}\} \subset [0,1]$.
    Letting $\underline{y'}=z\underline{y}$, for sufficiently large $j$, we have
    \[
    \underline{w}^{(k_j)} \in B(\underline{x}, \underline{y}',k_j-1;2\varepsilon)
    \]
    and
    \[
    H_{\varphi}(\underline{x},\underline{y'};2\varepsilon)
    \le S_{k_j-1}(\varphi-\alpha_{\varphi})(\underline{w}^{(k_j)}) + \varepsilon.
    \]
    Moreover, for large $j$, we have
    \begin{align*}
        H_{\varphi}(\underline{x},\underline{y},\varepsilon) + \varepsilon
        &>S_{k_j-1}(\varphi-\alpha_{\varphi})(\underline{w}^{(k_j)})+\varphi(\underline{y}')-\alpha_{\varphi}-L_{\varphi}\varepsilon\\
        &>H_{\varphi}(\underline{x},\underline{y}',2\varepsilon)-\varepsilon+\varphi(\underline{y}')-\alpha_{\varphi}-L_{\varphi}\varepsilon.
    \end{align*}
    Letting $\varepsilon\to0$ we have
    \begin{align*}
         H_{\varphi}(\underline{x},\underline{y})
            &\geq H_\varphi(\underline{x},\underline{y'})+\varphi(\underline{y'})-\alpha_\varphi
            &\geq \min_{\hat{\underline{y}} \in \sigma^{-1}\{{\underline{y}}\} }\{H_\varphi(\underline{x}, \hat{\underline{y}})+\varphi(\hat{\underline{y}})-\alpha_\varphi\},
    \end{align*}
    which completes the proof.
\end{proof}

The next theorem is the second half of Main Theorem \ref{theorem:cal_equiv}.
\begin{theorem}[cf. Main Theorem \ref{theorem:cal_equiv}]
\label{equivalence}
    For $\underline{x}$ and $\underline{y}\in \Omega_\varphi$ define $\underline{x}\sim \underline{y}$ by
    \begin{align*}
        H_\varphi(\underline{x},\underline{y})+H_\varphi(\underline{y},\underline{x})=0.
    \end{align*}
    Then this is an euqivalence relation on $\Omega$.
\end{theorem}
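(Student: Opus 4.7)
The plan is to check reflexivity, symmetry and transitivity in turn. \emph{Reflexivity} $\underline{x}\sim\underline{x}$ is immediate from Theorem~\ref{theorem:H_finite}, which gives $H_\varphi(\underline{x},\underline{x})=0$ whenever $\underline{x}\in\Omega_\varphi$. \emph{Symmetry} is built into the defining relation. All the real work lies in transitivity, and the engine for it is the calibrated subaction property from Theorem~\ref{property_Peierl} combined with Lemma~\ref{mane_calibrated}.

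For any fixed $\underline{x}\in\Omega_\varphi$, the function $u:=H_\varphi(\underline{x},\cdot)$ is a Lipschitz (calibrated) subaction by Theorem~\ref{property_Peierl}, so Lemma~\ref{mane_calibrated} yields
\[
S_\varphi(\underline{a},\underline{b})\ \geq\ H_\varphi(\underline{x},\underline{b})-H_\varphi(\underline{x},\underline{a})\qquad\text{for all }\underline{a},\underline{b}\in X.
\]
Coupled with the trivial bound $H_\varphi\geq S_\varphi$ (a liminf dominates the corresponding infimum at every $\varepsilon$-level), this single inequality delivers two products. Specialising $\underline{a}=\underline{x}$, $\underline{b}=\underline{z}$ and invoking $H_\varphi(\underline{x},\underline{x})=0$ yields the universal lower bound
\[
H_\varphi(\underline{x},\underline{z})+H_\varphi(\underline{z},\underline{x})\ \geq\ 0
\]
for every $\underline{x}\in\Omega_\varphi$ and every $\underline{z}\in X$. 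Specialising instead $\underline{a}=\underline{y}$, $\underline{b}=\underline{z}$ gives the triangle inequality
\[
H_\varphi(\underline{x},\underline{z})\ \leq\ H_\varphi(\underline{x},\underline{y})+H_\varphi(\underline{y},\underline{z})
\]
whenever $\underline{x}\in\Omega_\varphi$, with $\underline{y},\underline{z}\in X$ arbitrary.

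Transitivity then comes out for free. Assuming $\underline{x},\underline{y},\underline{z}\in\Omega_\varphi$ with $\underline{x}\sim\underline{y}$ and $\underline{y}\sim\underline{z}$, applying the triangle inequality once for $(\underline{x},\underline{y},\underline{z})$ and once for $(\underline{z},\underline{y},\underline{x})$ and adding the results,
\[
H_\varphi(\underline{x},\underline{z})+H_\varphi(\underline{z},\underline{x})\ \leq\ \bigl(H_\varphi(\underline{x},\underline{y})+H_\varphi(\underline{y},\underline{x})\bigr)+\bigl(H_\varphi(\underline{y},\underline{z})+H_\varphi(\underline{z},\underline{y})\bigr)=0,
\]
while the universal lower bound above supplies the reverse inequality. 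Hence $\underline{x}\sim\underline{z}$. No serious obstacle is anticipated; every step reduces to mechanically applying Lemma~\ref{mane_calibrated} to the distinguished Lipschitz subaction $H_\varphi(\underline{x},\cdot)$, and the only delicate point is finiteness of $H_\varphi$ on the relevant pairs, which is guaranteed by the Lipschitz bound $H_\varphi(\underline{x},\underline{y})\leq L_\varphi d(\underline{x},\underline{y})$ from Theorem~\ref{theorem:H_finite}.
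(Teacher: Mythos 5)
Your proof is correct, but it takes a genuinely different route from the paper. The paper proves a standalone triangle inequality
\[
H_\varphi(\underline{x},\underline{y}) \leq H_\varphi(\underline{x},\underline{z}) + H_\varphi(\underline{z},\underline{y}) \qquad \text{for all } \underline{x},\underline{y},\underline{z}\in X
\]
(Lemma~\ref{triangle}) by a direct combinatorial concatenation of approximating orbits, and then reads off transitivity and the lower bound $H_\varphi(\underline{x},\underline{z})+H_\varphi(\underline{z},\underline{x})\geq H_\varphi(\underline{x},\underline{x})=0$ from it. You instead sidestep Lemma~\ref{triangle} entirely: you feed the Lipschitz subaction $u=H_\varphi(\underline{x},\cdot)$ (Theorem~\ref{property_Peierl}, valid since $\underline{x}\in\Omega_\varphi$) into Lemma~\ref{mane_calibrated}, tack on the elementary inequality $H_\varphi\geq S_\varphi$, and obtain the restricted triangle inequality $H_\varphi(\underline{x},\underline{z})\leq H_\varphi(\underline{x},\underline{y})+H_\varphi(\underline{y},\underline{z})$ with the anchor $\underline{x}\in\Omega_\varphi$, plus the universal lower bound. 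That restricted version is exactly enough, because in the transitivity step the two anchors you need ($\underline{x}$ and $\underline{z}$) are both in $\Omega_\varphi$. The paper's route buys the unrestricted triangle inequality as an independent tool, which it also needs elsewhere (for instance in Proposition~\ref{lemma:sigma_equivalence}); your route is more economical if one only wants Theorem~\ref{equivalence}, and it makes the weak-KAM/subaction mechanism do all the work.

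One small slip: to get the lower bound you should specialize $\underline{a}=\underline{z}$, $\underline{b}=\underline{x}$ (not $\underline{a}=\underline{x}$, $\underline{b}=\underline{z}$, which just reproduces a tautology), so that $H_\varphi(\underline{z},\underline{x})\geq S_\varphi(\underline{z},\underline{x})\geq H_\varphi(\underline{x},\underline{x})-H_\varphi(\underline{x},\underline{z})=-H_\varphi(\underline{x},\underline{z})$. With that relabeling your argument is complete and correct.
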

Before the proof of Theorem \ref{equivalence}, we show the following lemma.
\begin{lemma}[Analogy of Lemma 4.2 in \cite{BLL13}]\label{triangle}
    For any $\underline{x},\underline{y},\underline{z}\in X$
    \begin{align}
        H_\varphi(\underline{x}, \underline{y})\leq H_\varphi(\underline{x},\underline{z})+H_\varphi(\underline{z},\underline{y}).
        \label{transitive_ineq}
    \end{align}
\end{lemma}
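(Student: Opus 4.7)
The strategy is the classical concatenation argument: near-minimizing ``orbits'' from $\underline{x}$ to $\underline{z}$ of length $n$ and from $\underline{z}$ to $\underline{y}$ of length $m$ can be glued together to produce a near-admissible orbit from $\underline{x}$ to $\underline{y}$ of length $n+m$, at the cost of an $O(\varepsilon)$ error introduced by the mismatch at the splicing point.

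More concretely, I would first reduce to the case where both $H_\varphi(\underline{x},\underline{z})$ and $H_\varphi(\underline{z},\underline{y})$ are finite (otherwise the inequality is trivial). Fix $\varepsilon>0$. Using the liminf definition, choose subsequences $n_k,m_k\to\infty$ and points $\underline{w}^{(k)}\in B(\underline{x},\underline{z},n_k;\varepsilon)$, $\underline{v}^{(k)}\in B(\underline{z},\underline{y},m_k;\varepsilon)$ with
\[
S_{n_k}(\varphi-\alpha_\varphi)(\underline{w}^{(k)})\to H_\varphi(\underline{x},\underline{z};\varepsilon),\quad
S_{m_k}(\varphi-\alpha_\varphi)(\underline{v}^{(k)})\to H_\varphi(\underline{z},\underline{y};\varepsilon).
\]
Define the concatenation $\underline{u}^{(k)}=w_0^{(k)}w_1^{(k)}\cdots w_{n_k-1}^{(k)}\underline{v}^{(k)}$, so that $\sigma^{n_k}(\underline{u}^{(k)})=\underline{v}^{(k)}$. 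Using the triangle inequality on $d$, one checks that $\underline{u}^{(k)}\in B(\underline{x},\underline{y},n_k+m_k;2\varepsilon)$ for all sufficiently large $k$.

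Next, I would estimate the cost of the concatenation. The key observation is that $\sigma^i\underline{u}^{(k)}$ and $\sigma^i\underline{w}^{(k)}$ agree on their first $n_k-i$ coordinates, so
\[
d(\sigma^i\underline{u}^{(k)},\sigma^i\underline{w}^{(k)})\le 2^{-(n_k-i)}d(\underline{v}^{(k)},\sigma^{n_k}\underline{w}^{(k)})\le 2^{-(n_k-i)}\cdot 2\varepsilon,
\]
and hence $\sum_{i=0}^{n_k-1}d(\sigma^i\underline{u}^{(k)},\sigma^i\underline{w}^{(k)})< 2\varepsilon$. Combined with the Lipschitz property of $\varphi$ and the decomposition $S_{n_k+m_k}=S_{n_k}+S_{m_k}\circ\sigma^{n_k}$, this yields
\[
S_{n_k+m_k}(\varphi-\alpha_\varphi)(\underline{u}^{(k)})\le S_{n_k}(\varphi-\alpha_\varphi)(\underline{w}^{(k)})+S_{m_k}(\varphi-\alpha_\varphi)(\underline{v}^{(k)})+2L_\varphi\varepsilon.
\]

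Finally, since $\underline{u}^{(k)}\in B(\underline{x},\underline{y},n_k+m_k;2\varepsilon)$, taking the infimum over admissible points of length $n_k+m_k$ and then $\liminf_{k\to\infty}$ gives
\[
H_\varphi(\underline{x},\underline{y};2\varepsilon)\le H_\varphi(\underline{x},\underline{z};\varepsilon)+H_\varphi(\underline{z},\underline{y};\varepsilon)+2L_\varphi\varepsilon,
\]
where we use that a liminf along any subsequence dominates the liminf along $\mathbb{N}$. Letting $\varepsilon\to 0$ concludes the proof.

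The main obstacle is bookkeeping rather than any substantive difficulty: one must carefully track that the ``$\varepsilon$'' in the initial balls becomes ``$2\varepsilon$'' after concatenation, and that the Lipschitz error in replacing $\underline{w}^{(k)}$ by $\underline{u}^{(k)}$ on the first $n_k$ coordinates is controlled by the geometric decay estimate above rather than by a crude bound like $\sum_{i=0}^{n_k-1}2^{-(n_k-i-1)}$. The argument is very close in spirit to Lemma 4.2 of \cite{BLL13}; the only adaptation needed is this sharper distance estimate, which is available because the alphabet is bounded in diameter.
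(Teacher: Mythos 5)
Your proposal is correct and follows essentially the same route as the paper's proof: concatenate near-minimizing orbits from $\underline{x}$ to $\underline{z}$ and from $\underline{z}$ to $\underline{y}$, use the geometric decay $\sum_{i=0}^{n-1}d(\sigma^i\underline{u},\sigma^i\underline{w})\le\sum_{j\ge 1}2^{-j}\,d(\sigma^{n}\underline{u},\sigma^{n}\underline{w})$ together with the Lipschitz property of $\varphi$ to control the splicing cost by $O(L_\varphi\varepsilon)$, note that the concatenation lies in $B(\underline{x},\underline{y},n+m;2\varepsilon)$, and pass to the limit. The only cosmetic difference is that you phrase the approximation via convergent subsequences and keep everything at the level of $H_\varphi(\cdot,\cdot;\varepsilon)$ before letting $\varepsilon\to 0$, whereas the paper threads an auxiliary parameter $\theta$ through $\inf_{n\ge N}$-type bounds; your handling of the limits is if anything slightly cleaner, and your explicit reduction to the case where both terms on the right are finite makes the $+\infty$ case transparent.
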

\begin{proof}
    We remark that both sides of \eqref{transitive_ineq} may become $+\infty$.

    Fix $\theta>0$.
    Let $\varepsilon>0$ and $N\geq 1$ s.t. $2L_\varphi \varepsilon \le \theta$,
    \begin{align*}
        H_\varphi(\underline{x},\underline{y})&\leq \inf_{n\geq N}\{S_n(\varphi-\alpha_\varphi)(\underline{w}): \underline{w}\in B(\underline{x},\underline{y},n;2\varepsilon)+\theta,\\
        H_\varphi(\underline{x}, \underline{z})&\geq \inf_{n\geq N}\{S_n(\varphi-\alpha_\varphi)(\underline{w}): \underline{w}\in B(\underline{x},\underline{z},n;\varepsilon)\}-\theta
    \end{align*}
    and 
    \begin{align*}
        H_\varphi(\underline{z}, \underline{y})\geq \inf_{n\geq N}\{S_n(\varphi-\alpha_\varphi)(\underline{w}): \underline{w}\in B(\underline{z},\underline{y},n;\varepsilon)\}-\theta.
    \end{align*}

    Then there exist $n_1\geq N$ and $\underline{w}^{(1)}\in  B(\underline{x},\underline{z},n_1;\varepsilon)$ s.t.
    \begin{align*}
        H_\varphi(\underline{x},\underline{z})\geq S_{n_1}(\varphi-\alpha_\varphi)(\underline{w}^{(1)})-2\theta
    \end{align*}
    and there exist $n_2\geq N$ and $\underline{w}^{(2)}\in B(\underline{z},\underline{y}, n_2;\varepsilon)$ s.t.
    \begin{align*}
        H_\varphi(\underline{z},\underline{y})\geq S_{n_2}(\varphi-\alpha_\varphi)(\underline{w}^{(2)})-2\theta.
    \end{align*}

    Let 
    \begin{align*}
        \underline{w}=w^{(1)}_0\cdots w^{(1)}_{n_1-1}\underline{w}^{(2)},
        \quad \mbox{i.e.}\quad
        \underline{w}\in [w^{(1)}_0\cdots w^{(1)}_{n_1-1}]\cap \sigma^{-n_1}\{\underline{w}^{(2)}\}.
    \end{align*}
    Then we have 
    \begin{align*}
        &S_{n_1+n_2}(\varphi-\alpha_\varphi)(\underline{w})\\
        &\leq S_{n_1}(\varphi-\alpha_\varphi)(\underline{w}^{(1)})+S_{n_2}(\varphi-\alpha_\varphi)(\underline{w}^{(2)})
            +L_\varphi \sum_{i=0}^{n_1-1}d(\sigma^i(\underline{w}),\sigma^i \underline{w}^{(1)})\\
            &\leq H_\varphi(\underline{x},\underline{z})+H_\varphi(\underline{z},\underline{y})+L_\varphi \sum_{i=1}^{n_1} 2^{-i} d(\sigma^{n_1}\underline{w}, \sigma^{n_1}\underline{w}^{(1)})+4\theta\\
            &\leq  H_\varphi(\underline{x},\underline{z})+H_\varphi(\underline{z},\underline{y})+L_\varphi \left(d(\underline{w}^{(2)}, \underline{z})+d(\underline{z},\sigma^{n_1}\underline{w}^{(1)} )\right)+4\theta\\
            &\leq  H_\varphi(\underline{x},\underline{z})+H_\varphi(\underline{z},\underline{y})+2 L_\varphi \varepsilon+4\theta\\
            &\leq H_\varphi(\underline{x},\underline{z})+H_\varphi(\underline{z},\underline{y})+5\theta.
    \end{align*}
    Moreover $d(\sigma^{n_1+n_2}\underline{w},\underline{y})=d(\sigma^{n_2}\underline{w}^{(2)},\underline{y})<\varepsilon$ and
    \begin{align*}
        d(\underline{w}, \underline{x})&\leq d(\underline{w}, \underline{w}^{(1)})+d(\underline{w}^{(1)}, \underline{x})\\
        &\leq 2^{-n_1}d(\sigma^{n_1}\underline{w}, \sigma^{n_1}\underline{w}^{(1)})+\varepsilon\\
        &\leq 2^{-n_1}\left(d(\underline{w}^{(2)}, \underline{z})+d(\underline{z}, \sigma^{n_1}\underline{w}^{(1)})\right)+\varepsilon\\
        &\leq \left(2^{-n_1+1}+1\right)\varepsilon
        \leq 2\varepsilon,
    \end{align*}
    which implies
    \begin{align*}
        H_\varphi(\underline{x},\underline{y})&\leq S_{n_1+n_2}(\varphi-\alpha_\varphi)(\underline{w})+\theta\\
        &\leq H_\varphi(\underline{x},\underline{z})+H_\varphi(\underline{z},\underline{y})+6\theta.
    \end{align*}
    Since $\theta>0$ is arbitrary, we complete the proof.
\end{proof}

\begin{proof}[Proof of Theorem \ref{equivalence}]
    It suffices to show the transitive relation.
    Take $\underline{x},\underline{y}$ and $\underline{z}\in \Omega$ with $\underline{x}\sim \underline{y}$ and $\underline{y}\sim \underline{z}$.
    By \eqref{transitive_ineq} we have
    \begin{align*}
        H_\varphi(\underline{x}, \underline{z})+H_\varphi(\underline{z},\underline{x})
        &\leq H_\varphi(\underline{x},\underline{y})+H_\varphi(\underline{y},\underline{z})+H_\varphi(\underline{z},\underline{y})+H_\varphi(\underline{y},\underline{x})=0.
    \end{align*}
    It also yields
    \begin{align*}
        H_\varphi(\underline{x},\underline{z})+H_\varphi(\underline{z},\underline{x})\geq H_\varphi(\underline{x},\underline{x})=0.
    \end{align*}
\end{proof}

\begin{proof}[Proof of Main Theorem~\ref{theorem:cal_equiv}]
It follows immediately from Theorem~\ref{property_Peierl} and  \ref{equivalence}.
\end{proof}

At the end of this section, we present the invariance of the above equivalent classes.
\begin{proposition}
    \label{lemma:sigma_equivalence}
    For any $\underline{x}\in \Omega_\varphi$ we have
    \begin{align*}
        H_\varphi(\underline{x},\sigma(\underline{x}))+H_\varphi(\sigma(\underline{x}),\underline{x})=0.
    \end{align*}

    In particular a equivalence class $[\underline{x}]$ of the relation satisfies $\sigma[\underline{x}]\subset [\underline{x}]$.
\end{proposition}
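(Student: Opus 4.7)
The identity $H_\varphi(\underline{x},\sigma\underline{x})+H_\varphi(\sigma\underline{x},\underline{x})=0$ will follow from matching upper and lower bounds, and $\sigma[\underline{x}]\subset[\underline{x}]$ will then be a short consequence via transitivity of $\sim$ (Theorem~\ref{equivalence}).

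The lower bound is immediate: by Lemma~\ref{triangle} applied with middle point $\sigma\underline{x}$ together with $H_\varphi(\underline{x},\underline{x})=0$ (Theorem~\ref{theorem:H_finite}),
$$0 = H_\varphi(\underline{x},\underline{x}) \le H_\varphi(\underline{x},\sigma\underline{x}) + H_\varphi(\sigma\underline{x},\underline{x}).$$
For the upper bound I bound each term. Since $\underline{y}\mapsto H_\varphi(\underline{x},\underline{y})$ is a subaction (Theorem~\ref{property_Peierl}), the subaction inequality evaluated at $\underline{y}=\underline{x}$ combined with $H_\varphi(\underline{x},\underline{x})=0$ gives
$$H_\varphi(\underline{x},\sigma\underline{x}) \le \varphi(\underline{x})-\alpha_\varphi.$$
For the other term, I return to the definition: since $H_\varphi(\underline{x},\underline{x})=0$, a diagonal extraction produces $\varepsilon_k\downarrow 0$, $N_k\to\infty$, and $\underline{z}^{(k)}\in B(\underline{x},\underline{x},N_k;\varepsilon_k)$ with $S_{N_k}(\varphi-\alpha_\varphi)(\underline{z}^{(k)})\to 0$. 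Setting $\underline{w}^{(k)}=\sigma\underline{z}^{(k)}$, the elementary estimate $d(\sigma\underline{a},\sigma\underline{b})\le 2d(\underline{a},\underline{b})$ yields $\underline{w}^{(k)}\in B(\sigma\underline{x},\underline{x},N_k-1;2\varepsilon_k)$, while a telescoping identity gives
$$S_{N_k-1}(\varphi-\alpha_\varphi)(\underline{w}^{(k)}) = S_{N_k}(\varphi-\alpha_\varphi)(\underline{z}^{(k)}) - \bigl(\varphi(\underline{z}^{(k)})-\alpha_\varphi\bigr) \;\longrightarrow\; \alpha_\varphi-\varphi(\underline{x})$$
by Lipschitz continuity of $\varphi$ and $\underline{z}^{(k)}\to\underline{x}$. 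Hence $H_\varphi(\sigma\underline{x},\underline{x};\delta)\le\alpha_\varphi-\varphi(\underline{x})$ for every $\delta>0$, and thus $H_\varphi(\sigma\underline{x},\underline{x})\le\alpha_\varphi-\varphi(\underline{x})$. Summing the two estimates yields the upper bound $0$.

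For the inclusion $\sigma[\underline{x}]\subset[\underline{x}]$, the identity just proved combined with Lemma~\ref{triangle} gives
$$0 \le H_\varphi(\sigma\underline{x},\sigma\underline{x}) \le H_\varphi(\sigma\underline{x},\underline{x})+H_\varphi(\underline{x},\sigma\underline{x}) = 0,$$
so $\sigma\underline{x}\in\Omega_\varphi$ and $\underline{x}\sim\sigma\underline{x}$. For any $\underline{y}\in[\underline{x}]$, applying the identity to $\underline{y}\in\Omega_\varphi$ yields $\underline{y}\sim\sigma\underline{y}$, and transitivity from Theorem~\ref{equivalence} gives $\underline{x}\sim\sigma\underline{y}$.

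\textbf{Main obstacle.} The delicate step is the diagonal construction of $(\varepsilon_k,N_k,\underline{z}^{(k)})$: $H_\varphi(\underline{x},\underline{x};\varepsilon)$ is merely $\le 0$ for each fixed $\varepsilon$ and only tends to $0$ in the limit, so one must simultaneously drive $\varepsilon_k\to 0$ and $N_k\to\infty$ while choosing near-optimizers of the internal infima. One must also carefully track the factor-$2$ blow-up of radii under $\sigma$ so that $\underline{w}^{(k)}$ still lies in a vanishing neighborhood of $\sigma\underline{x}$, which is what allows the $\delta\to 0$ limit at the end.
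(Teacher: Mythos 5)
Your proof is correct and follows essentially the same route as the paper: the lower bound via the triangle inequality (Lemma~\ref{triangle}) together with $H_\varphi(\underline{x},\underline{x})=0$, the bound $H_\varphi(\underline{x},\sigma\underline{x})\le\varphi(\underline{x})-\alpha_\varphi$ via the calibrated-subaction inequality, and the bound $H_\varphi(\sigma\underline{x},\underline{x})\le\alpha_\varphi-\varphi(\underline{x})$ by shifting near-optimizers of $H_\varphi(\underline{x},\underline{x};\varepsilon)$ once and invoking Lipschitz continuity. The only cosmetic difference is that you organize the second upper bound as a diagonal extraction $(\varepsilon_k, N_k, \underline{z}^{(k)})$ whereas the paper fixes a tolerance $\theta$ and chooses $\varepsilon$, $N$ accordingly; you also spell out the ``in particular'' deduction ($\sigma\underline{x}\in\Omega_\varphi$, $\underline{x}\sim\sigma\underline{x}$, and then transitivity for any $\underline{y}\in[\underline{x}]$), which the paper states without proof.
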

\begin{proof}
    By Theorem~\ref{theorem:H_finite}  and Lemma \ref{triangle}
    we have
    \begin{align*}
        H_\varphi(\underline{x},\sigma(\underline{x}))+H_\varphi(\sigma(\underline{x}),\underline{x})\ge H_\varphi(\underline{x},\underline{x})= 0.
    \end{align*}
    Thus we will show that it is non-negative.
    Since $H_\varphi(\underline{x}, \cdot)$ is a subaction, we have
    \begin{align}
        H_\varphi(\underline{x},\sigma(\underline{x}))
        &=\min_{\sigma(\underline{y})=\sigma(\underline{x})}\{H_\varphi(\underline{x},\underline{y})+\varphi(\underline{y})-\alpha_\varphi\}\nonumber\\
        &\leq H_\varphi(\underline{x},\underline{x})+\varphi(\underline{x})-\alpha_\varphi
        =\varphi(\underline{x})-\alpha_\varphi.\label{ineq_subaction}
    \end{align}
    Let $\theta>0$.
    Take $\displaystyle 0<\varepsilon<\theta/L_\varphi$ and $N\geq1$ s.t.
    \begin{align*}
        H_\varphi(\sigma(\underline{x}),\underline{x})\leq \inf_{n\geq N}\{S_n(\varphi-\alpha_\varphi)(\underline{z}): \underline{z}\in B(\sigma(\underline{x}),\underline{x},n;2\varepsilon)\}+\theta
    \end{align*}
    and
    \begin{align*}
        H_\varphi(\underline{x},\underline{x})\geq \inf_{n\geq N+1}\{S_n(\varphi-\alpha_\varphi)(\underline{z}): \underline{z}\in B(\underline{x},\underline{x},n;\varepsilon)\}-\theta.
    \end{align*}
    Then there exist $n\geq N+1$ and $\underline{z}\in B(\underline{x},\underline{x},n;\varepsilon)$ s.t.
    \begin{align*}
        H_\varphi(\underline{x},\underline{x})\geq S_n(\varphi-\alpha_\varphi)(\underline{z})-2\theta.
    \end{align*}
    Then we have $\displaystyle d(\sigma(\underline{z}), \sigma(\underline{x})) = 2\left(d(\underline{z},\underline{x})-\frac{|z_0-x_0|}{2}\right)\leq 2\varepsilon$ and
    \begin{align*}
        S_{n-1}(\varphi-\alpha_\varphi)(\sigma(\underline{z}))&=S_n(\varphi-\alpha_\varphi)(\underline{z})-\varphi(\underline{z})+\alpha_\varphi\\
        &\leq  H_\varphi(\underline{x},\underline{x})-\varphi(\underline{z})+\alpha_\varphi+2\theta\\
        &=-\varphi(\underline{x})+\alpha_\varphi-\varphi(\underline{z})+\varphi(\underline{x})+2\theta\\
        &\leq -\varphi(\underline{x})+\alpha_\varphi+L_\varphi d(\underline{x},\underline{z})+2\theta\\
        &\leq -\varphi(\underline{x})+\alpha_\varphi+L_\varphi \varepsilon+2\theta\\
        &\leq -\varphi(\underline{x})+\alpha_\varphi+3\theta.
    \end{align*}
    Combining \eqref{ineq_subaction}, we have
    \begin{align*}
        H_\varphi(\sigma(\underline{x}), \underline{x})+H_\varphi(\underline{x},\sigma(\underline{x}))\leq +4\theta,
    \end{align*}
    which complete the proof.
\end{proof}

\section{Another characterization of the Aubry set}
\label{sec:character_Aubry}
In this section, we describe another characterization of the Aubry set.
We assume that the potential $\varphi:X\to \R$ is Lipschitz.
A positive-semi orbit $\{\sigma^n(x)\}_{n\in\N_0}$ is said to be
\begin{itemize}
\item $\varphi$-semi-static: if for any non-negative integers $i<j$
\[
	\sum_{n=i}^{j-1} \Big(\varphi\circ\sigma^n(\underline{x})-\alpha_{\varphi}\Big)=S_\varphi(\sigma^{i}(\underline{x}),\sigma^j(\underline{x})).
\]
\item $\varphi$-static: if for any non-negative integers $i<j$
\[
	\sum_{n=i}^{j-1} \Big(\varphi\circ\sigma^n(\underline{x})-\alpha_{\varphi}\Big)=-S_\varphi(\sigma^{j}(\underline{x}),\sigma^i(\underline{x})).
\]
\end{itemize}
We call the sets
\begin{align*}
	N_{\varphi}&:=\{\sigma^k(\underline{x})\in X \mid \{\sigma^n(\underline{x})\}_{n\in\N_0}\ \text{is}\ \varphi\text{-semi-static}, k\in\N_0\},\\
	A_{\varphi}&:=\{\sigma^k(\underline{x})\in X \mid \{\sigma^n(\underline{x})\}_{n\in\N_0}\ \text{is}\ \varphi\text{-static}, k\in\N_0\}
\end{align*}
as \textit{the $\varphi$-semi-static set} and \textit{the $\varphi$-static set} respectively.
It is trivial that $N_{\varphi}$ and $A_{\varphi}$ are $\sigma$-invariant since they are sets of positive semi-orbits.
We will see that $A_{\varphi}\subset N_\varphi$ (Proposition \ref{prop:semistatic_static}), and that $A_{\varphi}$ (and hence $N_{\varphi}$) is not empty (Theorem \ref{theorem:static}).
Moreover, by Proposition~\ref{mane_conti}, we deduce that the $\varphi$-static set $A_\varphi$ is closed and hence compact. We begin with the following.
    \begin{lemma}\label{triangle_ineq}
        For any $\underline{x},\underline{y}, \underline{z}\in X$, we have
        \begin{align*}
            S_\varphi(\underline{x},\underline{y})\le S_\varphi(\underline{x},\underline{z})+S_\varphi(\underline{z},\underline{y}).
        \end{align*}
        In particular,
        \begin{align*}
            S_\varphi(\underline{x},\underline{y})+S_\varphi(\underline{y},\underline{x})\ge 0
        \end{align*}
        holds for all $\underline{x},\underline{y}\in X$.
    \end{lemma}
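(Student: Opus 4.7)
The plan is to mimic the construction used in the proof of Lemma~\ref{triangle} (the triangle inequality for Peierl's barrier), with the simplification that the Ma\~n\'e potential only involves an infimum, not a liminf in the orbit length. I first observe that, by Lemma~\ref{mane_calibrated} applied to any Lipschitz subaction $u$ of $\varphi$, the function $S_\varphi$ is bounded below on $X\times X$, so we may assume both $S_\varphi(\underline{x},\underline{z})$ and $S_\varphi(\underline{z},\underline{y})$ are finite (otherwise the inequality is trivial).

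Fix $\theta>0$. For each sufficiently small $\varepsilon>0$, by the definition of $S_\varphi$ I choose integers $n_1,n_2\in\N$ and sequences $\underline{w}^{(1)}\in B(\underline{x},\underline{z},n_1;\varepsilon)$, $\underline{w}^{(2)}\in B(\underline{z},\underline{y},n_2;\varepsilon)$ with
\begin{align*}
S_{n_1}(\varphi-\alpha_\varphi)(\underline{w}^{(1)}) &\le S_\varphi(\underline{x},\underline{z})+\theta,\\
S_{n_2}(\varphi-\alpha_\varphi)(\underline{w}^{(2)}) &\le S_\varphi(\underline{z},\underline{y})+\theta,
\end{align*}
and glue them together by
\[\underline{w}:=w_0^{(1)}w_1^{(1)}\cdots w_{n_1-1}^{(1)}\underline{w}^{(2)},\]
so that $\sigma^{n_1}\underline{w}=\underline{w}^{(2)}$ and $\underline{w}$ coincides with $\underline{w}^{(1)}$ on the first $n_1$ coordinates.

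Three estimates now finish the job. First, the identity $d(\underline{u},\underline{v})=2^{-k}d(\sigma^k\underline{u},\sigma^k\underline{v})$ for sequences agreeing on the first $k$ coordinates, together with $d(\underline{w}^{(2)},\sigma^{n_1}\underline{w}^{(1)})\le d(\underline{w}^{(2)},\underline{z})+d(\underline{z},\sigma^{n_1}\underline{w}^{(1)})\le 2\varepsilon$, gives $d(\underline{w},\underline{x})\le 2\varepsilon$ and $d(\sigma^{n_1+n_2}\underline{w},\underline{y})<\varepsilon$, so $\underline{w}\in B(\underline{x},\underline{y},n_1+n_2;2\varepsilon)$. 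Second, by additivity of Birkhoff sums, $S_{n_1+n_2}(\varphi-\alpha_\varphi)(\underline{w})=S_{n_1}(\varphi-\alpha_\varphi)(\underline{w})+S_{n_2}(\varphi-\alpha_\varphi)(\underline{w}^{(2)})$. Third, since the first $n_1$ coordinates of $\underline{w}$ and $\underline{w}^{(1)}$ agree, the same identity telescopes $\sum_{i=0}^{n_1-1}d(\sigma^i\underline{w},\sigma^i\underline{w}^{(1)})\le d(\sigma^{n_1}\underline{w},\sigma^{n_1}\underline{w}^{(1)})\le 2\varepsilon$, so the Lipschitz property of $\varphi$ bounds $|S_{n_1}(\varphi-\alpha_\varphi)(\underline{w})-S_{n_1}(\varphi-\alpha_\varphi)(\underline{w}^{(1)})|\le 2L_\varphi\varepsilon$. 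Combining these with the near-optimality of $\underline{w}^{(1)},\underline{w}^{(2)}$ yields
\[S_\varphi(\underline{x},\underline{y};2\varepsilon)\le S_\varphi(\underline{x},\underline{z})+S_\varphi(\underline{z},\underline{y})+2\theta+2L_\varphi\varepsilon.\]
Letting $\varepsilon\to 0$ and then $\theta\to 0$ completes the proof of the first inequality.

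For the ``in particular'' part, setting $\underline{z}=\underline{x}$ in the triangle inequality gives $S_\varphi(\underline{x},\underline{x})\le S_\varphi(\underline{x},\underline{y})+S_\varphi(\underline{y},\underline{x})$, and Lemma~\ref{mane_calibrated} with any Lipschitz subaction $u$ yields $S_\varphi(\underline{x},\underline{x})\ge u(\underline{x})-u(\underline{x})=0$. I do not foresee a real obstacle: the construction is a direct analogue of the one in Lemma~\ref{triangle} and is in fact strictly easier because there is no liminf in the orbit length to manage. The only delicate point is the uniform bookkeeping of the distance constants, which relies cleanly on the geometric-series identity above.
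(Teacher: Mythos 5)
Your proof is correct and takes essentially the same approach as the paper: the paper's proof of Lemma~\ref{triangle_ineq} explicitly refers back to the gluing construction in the proof of Lemma~\ref{triangle} (for the Peierls barrier), which is precisely what you reproduce here, with the correct observation that the infimum is taken over all $n\in\N$ rather than over $n\ge N$ for large $N$. One small slip in wording: for the ``in particular'' part the correct substitution into the triangle inequality is to take the endpoints $\underline{x},\underline{x}$ with $\underline{y}$ as the intermediate point (not ``setting $\underline{z}=\underline{x}$''), but the displayed inequality you write is the right one, so the argument goes through.
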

	\begin{proof}
        We can prove the claim as in the proof of Lemma~\ref{triangle}.
        Note that we do not assume that $N$
        is sufficiently large in the discussion.
    \end{proof}

\begin{proposition}\label{prop:semistatic_static}
    For each $\varphi\in C(X)$, it holds that $A_{\varphi}\subset N_\varphi$.
\end{proposition}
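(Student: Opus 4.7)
The plan is to derive the $\varphi$-semi-static identity as a direct consequence of the $\varphi$-static identity together with two simple facts about the Ma\~n\'e potential: an elementary upper bound using actual orbit segments as test sequences in the infimum defining $S_\varphi$, and the ``anti-symmetry'' inequality provided by Lemma~\ref{triangle_ineq}.

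First I would observe the following ``orbit-bound'': for every $\underline{x}\in X$ and every pair of integers $0\le i<j$, the point $\sigma^{i}(\underline{x})$ itself belongs to $B(\sigma^{i}(\underline{x}),\sigma^{j}(\underline{x}),j-i;\varepsilon)$ for every $\varepsilon>0$, since both endpoint distances vanish. Hence, by the very definition of $S_\varphi(\cdot,\cdot;\varepsilon)$ as an infimum, one has
\[
S_\varphi(\sigma^{i}(\underline{x}),\sigma^{j}(\underline{x});\varepsilon)\le S_{j-i}(\varphi-\alpha_{\varphi})(\sigma^{i}(\underline{x}))=\sum_{n=i}^{j-1}\bigl(\varphi\circ\sigma^{n}(\underline{x})-\alpha_{\varphi}\bigr),
\]
and letting $\varepsilon\to 0$ gives the corresponding bound for $S_\varphi(\sigma^{i}(\underline{x}),\sigma^{j}(\underline{x}))$.

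Next, suppose $\{\sigma^{n}(\underline{x})\}_{n\in\N_{0}}$ is $\varphi$-static and fix $0\le i<j$. Then the orbit-bound together with the static identity yields
\[
S_\varphi(\sigma^{i}(\underline{x}),\sigma^{j}(\underline{x}))\le\sum_{n=i}^{j-1}\bigl(\varphi\circ\sigma^{n}(\underline{x})-\alpha_{\varphi}\bigr)=-S_\varphi(\sigma^{j}(\underline{x}),\sigma^{i}(\underline{x})),
\]
so $S_\varphi(\sigma^{i}(\underline{x}),\sigma^{j}(\underline{x}))+S_\varphi(\sigma^{j}(\underline{x}),\sigma^{i}(\underline{x}))\le 0$. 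Combining this with the reverse inequality from Lemma~\ref{triangle_ineq} forces equality; hence
\[
S_\varphi(\sigma^{i}(\underline{x}),\sigma^{j}(\underline{x}))=-S_\varphi(\sigma^{j}(\underline{x}),\sigma^{i}(\underline{x}))=\sum_{n=i}^{j-1}\bigl(\varphi\circ\sigma^{n}(\underline{x})-\alpha_{\varphi}\bigr),
\]
which is precisely the $\varphi$-semi-static identity for indices $i<j$. Since this holds for every such pair, $\{\sigma^{n}(\underline{x})\}_{n\in\N_{0}}$ is $\varphi$-semi-static, and consequently every $\sigma^{k}(\underline{x})\in A_\varphi$ lies in $N_\varphi$.

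There is no serious obstacle in this proof; the whole argument reduces to recognizing that the trivial choice $\underline{z}=\sigma^{i}(\underline{x})$ is always admissible in the infimum defining $S_\varphi$ and then squeezing the expression $S_\varphi(\sigma^{i}(\underline{x}),\sigma^{j}(\underline{x}))+S_\varphi(\sigma^{j}(\underline{x}),\sigma^{i}(\underline{x}))$ between $0$ and $0$. The only subtlety worth noting is that the proof does not need any regularity beyond continuity of $\varphi$, matching the statement of the proposition.
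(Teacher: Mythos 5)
Your proof is correct and follows essentially the same route as the paper: both use the admissibility of $\underline{z}=\sigma^{i}(\underline{x})$ in the defining infimum to get $\sum_{n=i}^{j-1}(\varphi\circ\sigma^{n}(\underline{x})-\alpha_{\varphi})\ge S_\varphi(\sigma^{i}(\underline{x}),\sigma^{j}(\underline{x}))$, then invoke Lemma~\ref{triangle_ineq} and the static identity to squeeze the middle term. The only cosmetic difference is that you first pin down the anti-symmetry equality $S_\varphi(\sigma^{i}(\underline{x}),\sigma^{j}(\underline{x}))=-S_\varphi(\sigma^{j}(\underline{x}),\sigma^{i}(\underline{x}))$ before concluding, whereas the paper reads the conclusion directly off the chain of inequalities; the underlying argument is identical.
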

\begin{proof}
    Fix $\underline{x}\in X$. Since $\sigma^i(\underline{x})\in B(\sigma^i(\underline{x}),\sigma^j(\underline{x}),j-i;\varepsilon)$ holds for any $\varepsilon>0$, it is trivial that
\[
\sum_{n=i}^{j-1} \Big(\varphi\circ\sigma^n(\underline{x})-\alpha_{\varphi}\Big)\ge S_\varphi(\sigma^{i}(\underline{x}),\sigma^j(\underline{x})).
\]
By Lemma 3.1, we have $S_\varphi(\sigma^{i}(\underline{x}),\sigma^j(\underline{x}))\ge -S_\varphi(\sigma^{j}(\underline{x}),\sigma^i(\underline{x}))$.
Hence, for any $\underline{x}\in X$, we have
\[
\sum_{n=i}^{j-1} \Big(\varphi\circ\sigma^n(\underline{x})-\alpha_{\varphi}\Big)\ge S_\varphi(\sigma^{i}(\underline{x}),\sigma^j(\underline{x}))\ge -S_\varphi(\sigma^{j}(\underline{x}),\sigma^i(\underline{x})).
\]
Therefore $\underline{x}\in A_\varphi$ implies $\underline{x}\in N_\varphi$.
\end{proof}

\begin{remark}
In the Aubry-Mather theory for Euler-Lagrange flows, the corresponding object of $A_\varphi$ (resp. $N_\varphi$) is called as the Aubry set (resp. the Ma\~n\'e set), which looks different from our terminology ``Aubry set" (Definition~\ref{aubry_set}). In Theorem~\ref{theorem:static} below, we see that these notions are equivalent in our setting.
\end{remark}

Before we state our main result in this section, we give the following lemma.
    \begin{lemma}\label{lem:mane_ineq}
        If $\underline{x}\in X$ satisfies $S_\varphi(\underline{x},\underline{x})=0$, then
        \begin{align*}
            S_\varphi(\sigma(\underline{x}),\underline{x})
            \le-\varphi(\underline{x})+\alpha_{\varphi}.
        \end{align*}
    \end{lemma}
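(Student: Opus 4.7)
The plan is to adapt the ``shift'' argument from the second half of Proposition~\ref{lemma:sigma_equivalence}: I will take a near-minimizer for $S_\varphi(\underline{x},\underline{x})$ and shift it forward by one step to manufacture a near-minimizer witnessing $S_\varphi(\sigma(\underline{x}),\underline{x})$. The key algebraic identity powering this is
\[
S_{n-1}(\varphi-\alpha_\varphi)(\sigma(\underline{z})) = S_n(\varphi-\alpha_\varphi)(\underline{z}) - \bigl(\varphi(\underline{z})-\alpha_\varphi\bigr),
\]
so once the left-hand side is nearly zero for some $\underline{z}$ close to $\underline{x}$, the right-hand side is forced to be approximately $-\varphi(\underline{x})+\alpha_\varphi$ up to a Lipschitz error in $\varphi$.

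In detail, I would fix $\theta>0$ and choose $\varepsilon\in(0,\theta/L_\varphi)$. The hypothesis $S_\varphi(\underline{x},\underline{x})=0$ together with the monotonicity of $S_\varphi(\underline{x},\underline{x};\varepsilon)$ in $\varepsilon$ yields $S_\varphi(\underline{x},\underline{x};\varepsilon)\le 0$, so I can select some $n\ge 2$ and $\underline{z}\in B(\underline{x},\underline{x},n;\varepsilon)$ with $S_n(\varphi-\alpha_\varphi)(\underline{z})<\theta$. The explicit form of the metric gives $d(\sigma(\underline{z}),\sigma(\underline{x}))\le 2d(\underline{z},\underline{x})<2\varepsilon$, and $\sigma^{n-1}(\sigma(\underline{z}))=\sigma^n(\underline{z})$ is within $\varepsilon$ of $\underline{x}$, hence $\sigma(\underline{z})\in B(\sigma(\underline{x}),\underline{x},n-1;2\varepsilon)$. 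Combining the identity above with the Lipschitz bound $|\varphi(\underline{z})-\varphi(\underline{x})|\le L_\varphi\varepsilon<\theta$ produces
\[
S_{n-1}(\varphi-\alpha_\varphi)(\sigma(\underline{z})) < -\varphi(\underline{x})+\alpha_\varphi+2\theta,
\]
and so $S_\varphi(\sigma(\underline{x}),\underline{x};2\varepsilon)\le -\varphi(\underline{x})+\alpha_\varphi+2\theta$. Sending first $\varepsilon\to 0$ and then $\theta\to 0$ closes the proof.

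The only technical nuisance I anticipate is ensuring $n\ge 2$, so that $n-1\ge 1$ lies in the admissible range of the Ma\~n\'e potential. This I would handle by observing that if the near-infimum of $S_\varphi(\underline{x},\underline{x};\varepsilon)$ were realized only by length-one pseudo-orbits, one could concatenate two such near-minimizers into a pseudo-orbit of length two with only an additive Lipschitz error of order $\varepsilon$, which is harmlessly absorbed by the final $\theta\to 0$ limit. Apart from this bookkeeping, the argument is a direct one-step analogue of the telescoping already carried out in Proposition~\ref{lemma:sigma_equivalence} and requires no new ingredients beyond Lipschitz continuity of $\varphi$ and the nonexpansive behavior of $\sigma$ on the first coordinate of the metric.
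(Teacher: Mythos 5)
Your argument is correct and follows the same route as the paper's proof: take a near-minimizer $\underline{z}\in B(\underline{x},\underline{x},n;\varepsilon)$ for $S_\varphi(\underline{x},\underline{x})$, shift it by one step, use the telescoping identity $S_{n-1}(\varphi-\alpha_\varphi)(\sigma(\underline{z}))=S_n(\varphi-\alpha_\varphi)(\underline{z})-(\varphi(\underline{z})-\alpha_\varphi)$, and control $\varphi(\underline{z})-\varphi(\underline{x})$ by the Lipschitz constant before sending $\varepsilon,\theta\to 0$. The only divergence from the paper is the bookkeeping around $n\ge 2$: the paper splits into two cases, handling the fixed-point case $\sigma(\underline{x})=\underline{x}$ separately (where one shows $\varphi(\underline{x})=\alpha_\varphi$, making the desired inequality an equality with both sides zero) and noting that when $\sigma(\underline{x})\neq\underline{x}$ the constraint $d(\sigma(\underline{x}),\underline{x})\le 3\varepsilon$ rules out $n=1$ pseudo-orbits for small $\varepsilon$, whereas you propose a concatenation argument to manufacture a length-two pseudo-orbit; both resolutions work, though the paper's case split avoids the extra concatenation estimate.
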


	\begin{proof}
       	When $\underline{x}$ is a fixed point of $\sigma$, the Dirac measure at $\underline{x}$ must be the optimal measure for $\varphi$ and thus we have $\varphi(\underline{x})=\alpha_{\varphi}$,
	which implies
	\[
	S_\varphi(\sigma(\underline{x}),\underline{x})=S_\varphi(\underline{x},\underline{x})=0=-\varphi(\underline{x})+\alpha_{\varphi}.
	\]
	
Now we consider the case that $\underline{x}$ satisfies $\sigma(\underline{x})\neq \underline{x}$.	
	Fix $\varepsilon>0$.
	Take $n^{(j)}\in \N$ and $\underline{z}^{(j)}\in B(\underline{x},\underline{x}, n^{(j)};\varepsilon)$
	s.t.
	\[
		\lim_{j\to +\infty}S_{n^{(j)}}(\varphi-\alpha_{\varphi})(\underline{z}^{(j)})=S_\varphi(\underline{x},\underline{x};\varepsilon)\le S_\varphi(\underline{x},\underline{x})=0.
	\]
	Since $\underline{z}^{(j)}\in B(\underline{x},\underline{x}, n^{(j)};\varepsilon)$, we have
	\[
		d(\underline{x},\underline{z}^{(j)})<\varepsilon,\quad d(\sigma^{n_j}(\underline{z}^{(j)}),x)<\varepsilon.
	\]
	By the property of the shift map, the inequality
	\[
	d(\sigma(\underline{x}),\sigma(\underline{z}^{(j)}))<2\varepsilon
	\]
	holds and thus we obtain
	\[
	\sigma(\underline{z}^{(j)})\in B(\sigma(\underline{x}),\underline{x}, n^{(j)}-1;2\varepsilon).
	\]
    Note that $n^{(j)}$ is greater than 1 since $\sigma(\underline{x})\neq \underline{x}$.
	Moreover, the Lipschitz continuity of $\varphi$ implies that
	\[
		|\varphi(\underline{x})-\varphi(\underline{z}^{(j)})|\le L_\varphi d(\underline{x},\underline{z}^{(j)}).
	\]
	We compute
	\begin{align*}
		S_\varphi(\sigma(\underline{x}),\underline{x};2\varepsilon)
		&\le \lim_{j\to +\infty} S_{n^{(j)}-1}(\varphi-\alpha_{\varphi})(\sigma(\underline{z}^{(j)}))\\
		&=\lim_{j\to +\infty} \Big(S_{n^{(j)}}(\varphi-\alpha_{\varphi})(\underline{z}^{(j)})-\varphi(\underline{z}^{(j)})+\alpha_{\varphi}\Big)\\
		&\le \lim_{j\to +\infty} \Big(S_{n^{(j)}}(\varphi-\alpha_{\varphi})(\underline{z}^{(j)})-\varphi(x)+L_\varphi d(\underline{x},\underline{z}^{(j)})+\alpha_{\varphi}\Big)\\
		&\le-\varphi(\underline{x})+L_\varphi \varepsilon+\alpha_{\varphi},
	\end{align*}
	which yields
	\[
		S_\varphi(\sigma(\underline{x}),\underline{x})\le -\varphi(\underline{x})+\alpha_{\varphi}.
	\]
    \end{proof}

\begin{theorem}[cf. Main Theorem \ref{action_minimizing_aubry}]
\label{theorem:static}
	For each Lipschitz function $\varphi:X\to\R$, we have
    \begin{align*}
    A_{\varphi}=\Omega_\varphi.
    \end{align*}
\end{theorem}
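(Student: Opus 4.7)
The plan is to prove the two inclusions $A_{\varphi}\subset\Omega_{\varphi}$ and $\Omega_{\varphi}\subset A_{\varphi}$ separately, using only three ingredients: the triangle inequality for $S_\varphi$ (Lemma~\ref{triangle_ineq}), the backward-step bound of Lemma~\ref{lem:mane_ineq}, the non-negativity $S_\varphi(\underline{x},\underline{x})\ge 0$ coming from Lemma~\ref{mane_calibrated} (using a Lipschitz calibrated subaction of $\varphi$), together with the trivial upper bound
\[
S_\varphi(\underline{x},\sigma^{n}(\underline{x}))\le S_n(\varphi-\alpha_\varphi)(\underline{x})
\]
obtained by choosing $\underline{z}=\underline{x}$ as a test point in the infimum defining $S_\varphi$.

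For $A_{\varphi}\subset\Omega_{\varphi}$, let $\underline{x}=\sigma^{k}(\underline{y})\in A_\varphi$ with $\{\sigma^n(\underline{y})\}$ being $\varphi$-static. Specializing the static identity to $i=k$, $j=k+1$ gives $\varphi(\underline{x})-\alpha_\varphi=-S_\varphi(\sigma(\underline{x}),\underline{x})$. Combining this with the trivial bound $S_\varphi(\underline{x},\sigma(\underline{x}))\le\varphi(\underline{x})-\alpha_\varphi$ yields $S_\varphi(\underline{x},\sigma(\underline{x}))+S_\varphi(\sigma(\underline{x}),\underline{x})\le 0$. The triangle inequality gives $S_\varphi(\underline{x},\underline{x})\le 0$, and Lemma~\ref{mane_calibrated} gives $S_\varphi(\underline{x},\underline{x})\ge 0$, so $\underline{x}\in\Omega_\varphi$.

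For the converse $\Omega_{\varphi}\subset A_{\varphi}$, I would first establish the forward $\sigma$-invariance of $\Omega_\varphi$: for $\underline{x}\in\Omega_\varphi$ one has both $S_\varphi(\sigma(\underline{x}),\underline{x})\le -\varphi(\underline{x})+\alpha_\varphi$ by Lemma~\ref{lem:mane_ineq} and $S_\varphi(\underline{x},\sigma(\underline{x}))\le\varphi(\underline{x})-\alpha_\varphi$ by the trivial bound; adding and applying Lemma~\ref{triangle_ineq} squeezes $S_\varphi(\sigma(\underline{x}),\sigma(\underline{x}))$ between $0$ and $0$. Iterating, $\sigma^k(\underline{x})\in\Omega_\varphi$ for every $k\in\N_0$. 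Next, for each $n\ge 1$ chain Lemma~\ref{lem:mane_ineq} applied to $\sigma^0(\underline{x}),\ldots,\sigma^{n-1}(\underline{x})$ through the triangle inequality to obtain
\[
S_\varphi(\sigma^{n}(\underline{x}),\underline{x})\le\sum_{k=0}^{n-1}S_\varphi(\sigma^{k+1}(\underline{x}),\sigma^{k}(\underline{x}))\le -S_n(\varphi-\alpha_\varphi)(\underline{x}).
\]
Adding the trivial bound $S_\varphi(\underline{x},\sigma^{n}(\underline{x}))\le S_n(\varphi-\alpha_\varphi)(\underline{x})$ and comparing with the reverse inequality from Lemma~\ref{triangle_ineq} forces both estimates to be equalities; in particular $S_n(\varphi-\alpha_\varphi)(\underline{x})=-S_\varphi(\sigma^{n}(\underline{x}),\underline{x})$. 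Replacing $\underline{x}$ by $\sigma^{i}(\underline{x})\in\Omega_\varphi$ and $n$ by $j-i$ yields the full $\varphi$-static identity for every $0\le i<j$, showing that $\{\sigma^{n}(\underline{x})\}$ is $\varphi$-static and hence $\underline{x}\in A_\varphi$.

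The main obstacle is the chaining argument in the second inclusion: Lemma~\ref{lem:mane_ineq} requires the point at which it is applied to already lie in $\Omega_\varphi$, so the forward invariance of $\Omega_\varphi$ must be nailed down first. Once that is in place, the proof reduces to a two-sided squeeze between the trivial upper bound and the chained backward estimate, with the triangle inequality closing the gap from both sides.
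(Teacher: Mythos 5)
Your proof is correct and follows the same overall skeleton as the paper's (establish forward $\sigma$-invariance of $\Omega_\varphi$, then chain Lemma~\ref{lem:mane_ineq} through the triangle inequality and squeeze), but it is more elementary in one notable respect. The paper proves $\sigma(\Omega_\varphi)\subset\Omega_\varphi$ by routing through the Peierls-barrier machinery: Proposition~\ref{lemma:sigma_equivalence}, the triangle inequality for $H_\varphi$ (Lemma~\ref{triangle}), and the equivalence $H_\varphi(\underline{x},\underline{x})=0\Leftrightarrow S_\varphi(\underline{x},\underline{x})=0$ from Theorem~\ref{theorem:H_finite}. You observe instead that the two bounds the paper already uses afterwards, namely $S_\varphi(\sigma\underline{x},\underline{x})\le -\varphi(\underline{x})+\alpha_\varphi$ from Lemma~\ref{lem:mane_ineq} and the trivial $S_\varphi(\underline{x},\sigma\underline{x})\le \varphi(\underline{x})-\alpha_\varphi$, together with one more application of Lemma~\ref{triangle_ineq} and the non-negativity $S_\varphi(\cdot,\cdot)\ge 0$ on the diagonal (Lemma~\ref{mane_calibrated}), already squeeze $S_\varphi(\sigma\underline{x},\sigma\underline{x})$ to zero; so the $H_\varphi$ apparatus is not needed for this step at all, and the whole proof becomes self-contained in the Ma\~n\'e-potential framework. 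Your chaining in $\Omega_\varphi\subset A_\varphi$ is also marginally slicker: rather than first extracting the pointwise equalities $S_\varphi(\sigma^k\underline{x},\sigma^{k+1}\underline{x})=\varphi(\sigma^k\underline{x})-\alpha_\varphi$ at each step, you carry the chain of inequalities through to $S_\varphi(\sigma^{n}\underline{x},\underline{x})\le -S_n(\varphi-\alpha_\varphi)(\underline{x})$ and force equality once at the end by the two-sided squeeze. Both routes are mathematically equivalent; yours buys a shorter path with fewer prerequisites, while the paper's route makes more visible the link between $\Omega_\varphi$ and the Peierls barrier that is thematically central to Section~\ref{sec:Shinoda}.
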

\begin{proof}
	We first prove that $\underline{x}\in A_\varphi$ implies $S_\varphi(\underline{x},\underline{x})=0$.
	For each $\underline{x}\in A_\varphi$, we have
	\begin{align*}
		S_n(\varphi-\alpha_{\varphi})(\underline{x})+S_\varphi(\sigma^{n}(\underline{x}),\underline{x})=0
	\end{align*}
	for $n\in\N$ by the definition of $\varphi$-static set. Moreover, by the definition of the Ma\~{n}\'{e} potential, it holds that
	\[
		S_n(\varphi-\alpha_{\varphi})(\underline{x})
		\ge S_\varphi(\underline{x},\sigma^{n}(\underline{x})).
	\]
	Therefore, using the triangle inequality for the Ma\~{n}\'{e} potential (Lemma~\ref{triangle_ineq}),
	we obtain
	\begin{align*}
		0&=S_n(\varphi-\alpha_{\varphi})(\underline{x})+S_\varphi(\sigma^{n}(\underline{x}),\underline{x})\\
		&\ge S_\varphi(\underline{x},\sigma^{n}(\underline{x}))+S_\varphi(\sigma^{n}(\underline{x}),\underline{x})\\
		&\ge S_\varphi(\underline{x},\underline{x})\ge 0.
	\end{align*}
	Note that in the last inequality we use the fact that $S_\varphi(\underline{y},\underline{y})\ge 0$ for any $\underline{y}\in X$. Thus we have $S_\varphi(\underline{x},\underline{x})=0$.

	Next we see that $S_\varphi(\underline{x},\underline{x})=0$ implies $x\in A_\varphi$.
	Assume that $S_\varphi(\underline{x},\underline{x})=0$.
	Since each $x\in \Omega_\varphi$ satisfies
	\[
	H_\varphi(\underline{x},\sigma(\underline{x}))+H_\varphi(\sigma(\underline{x}),\underline{x})=0,
	\]
	we obtain
	\begin{align*}
		0\le H_\varphi(\sigma(\underline{x}),\sigma(\underline{x}))
		\le H_\varphi(\underline{x},\sigma(\underline{x}))+H_\varphi(\sigma(\underline{x}),\underline{x})=0
	\end{align*}    
	By the equivalence between $H_\varphi(\sigma(\underline{x}),\sigma(\underline{x}))=0$ and $S_\varphi(\sigma(\underline{x}),\sigma(\underline{x}))=0$, we conclude that $S_\varphi(\sigma(\underline{x}),\sigma(\underline{x}))=0$.
	Repeating the same discussion, we obtain $S_\varphi(\sigma^{k}(\underline{x}),\sigma^{k}(\underline{x}))=0$ for $k\in \N_0$.

	By Lemma~\ref{lem:mane_ineq}, we have
	\[
		S_\varphi(\sigma(\underline{x}),\underline{x}))\le -\varphi(\underline{x})+\alpha_{\varphi}.
	\]
	Trivially it holds that
	\[
		S_\varphi(\underline{x},\sigma(\underline{x}))\le \varphi(\underline{x})-\alpha_{\varphi},
	\]
    since $\underline{x}\in B(\underline{x},\sigma(\underline{x}), n=1;\varepsilon)$ holds for any $\varepsilon>0$
	and thus we obtain
	\[
		S_\varphi(\underline{x},\sigma(\underline{x}))+S_\varphi(\sigma(\underline{x}),\underline{x})\le0.
	\]
	Combining Lemma~\ref{triangle_ineq}, we see that
	\[
		S_\varphi(\underline{x},\sigma(\underline{x}))+S_\varphi(\sigma(\underline{x}),\underline{x})=0
	\]
	and deduce that
	\[
		S_\varphi(\underline{x},\sigma(\underline{x}))=\varphi(\underline{x})-\alpha_{\varphi},\quad
		S_\varphi(\sigma(\underline{x}),\underline{x})=-\varphi(\underline{x})+\alpha_{\varphi}.
	\]
	Similarly, from the identities $S_\varphi(\sigma^{k}(\underline{x}),\sigma^{k}(\underline{x}))=0$ for $k\in \N_0$, we have
	\begin{align*}
		S_\varphi(\sigma^k(\underline{x}),\sigma^{k+1}(\underline{x}))&=\varphi(\sigma^k(\underline{x}))-\alpha_{\varphi},\\
		S_\varphi(\sigma^{k+1}(\underline{x}),\sigma^k(\underline{x}))&=-\varphi(\sigma^k(\underline{x}))+\alpha_{\varphi}
	\end{align*}
	for $k\in \N_0$.
    Take arbitrary non-negative integers $i,j$ with $i<j$.
	From the triangle inequality for the Ma\~{n}\'{e} potential (Lemma~\ref{triangle_ineq}),
	\[
	S_\varphi(\sigma^{j}(\underline{x}),\sigma^{i}(\underline{x}))\le \sum_{k=i}^{j-1} S_\varphi(\sigma^{k+1}(\underline{x}),\sigma^{k}(\underline{x}))=-\sum_{k=i}^{j-1} \big(\varphi(\sigma^k(\underline{x}))-\alpha_{\varphi}\big)
	\]
	holds and thus we obtain
	\begin{align*}
		\sum_{k=i}^{j-1} \big(\varphi(\sigma^k(\underline{x}))-\alpha_{\varphi}\big)\le -S_\varphi(\sigma^{j}(\underline{x}),\sigma^{i}(\underline{x}))\le S_\varphi(\sigma^{i}(\underline{x}),\sigma^{j}(\underline{x})).
	\end{align*}
	Since
	\[
		S_\varphi(\sigma^{i}(\underline{x}),\sigma^{j}(\underline{x}))\le \sum_{k=i}^{j-1} \big(\varphi(\sigma^k(\underline{x}))-\alpha_{\varphi}\big)
	\]
	trivially holds,
	we deduce that
	\[
		\sum_{k=i}^{j-1} \big(\varphi(\sigma^k(\underline{x}))-\alpha_{\varphi}\big)= -S_\varphi(\sigma^{j}(\underline{x}),\sigma^{i}(\underline{x}))= S_\varphi(\sigma^{i}(\underline{x}),\sigma^{j}(\underline{x})),
	\]
	which yields that $\underline{x}\in A_\varphi$.
\end{proof}

\begin{proof}[Proof of Main Theorem~\ref{action_minimizing_aubry}]
It follows immediately from Theorem \ref{theorem:H_finite} and  \ref{theorem:static}.
\end{proof}

Lastly, we show the relationship between Lipschitz subactions and $A_{\varphi}$.
\begin{proposition}
\label{subaction_static} Let $u$ be a Lipschitz subaction of a Lipschitz function $\varphi:X\to\R$.
    If $\underline{x}\in A_\varphi$, then
    \begin{align}\label{each_static}
    u(\sigma^{k+1}(\underline{x}))-u(\sigma^{k}(\underline{x}))=\varphi(\sigma^k(\underline{x}))-\alpha_{\varphi}
    \end{align}
    for all $k\in\N_0$.
    Conversely, if \eqref{each_static} holds for each $k\in\N_0$, then $\underline{x}\in N_\varphi$.
\end{proposition}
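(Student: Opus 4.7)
The plan is to prove both directions by sandwiching the quantity $u(\sigma^{k+1}(\underline{x}))-u(\sigma^{k}(\underline{x}))$ between two bounds coming from (i) the subaction inequality for $u$ and (ii) the lower bound of the Ma\~n\'e potential by subaction differences established in Lemma~\ref{mane_calibrated}, using the fine control of $S_\varphi$ along static orbits obtained inside the proof of Theorem~\ref{theorem:static}.

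For the first direction, I would assume $\underline{x}\in A_\varphi$. Recall that in the proof of Theorem~\ref{theorem:static} we already derived, for each $k\in\N_0$, the exact identities
\[
S_\varphi(\sigma^{k}(\underline{x}),\sigma^{k+1}(\underline{x}))=\varphi(\sigma^{k}(\underline{x}))-\alpha_\varphi,\qquad
S_\varphi(\sigma^{k+1}(\underline{x}),\sigma^{k}(\underline{x}))=-\varphi(\sigma^{k}(\underline{x}))+\alpha_\varphi.
\]
Applying Lemma~\ref{mane_calibrated} to these two identities gives
\[
u(\sigma^{k+1}(\underline{x}))-u(\sigma^{k}(\underline{x}))\le \varphi(\sigma^{k}(\underline{x}))-\alpha_\varphi,\qquad u(\sigma^{k}(\underline{x}))-u(\sigma^{k+1}(\underline{x}))\le -\varphi(\sigma^{k}(\underline{x}))+\alpha_\varphi,
\]
and these two inequalities together force the desired equality \eqref{each_static}. (Alternatively, the first inequality is simply the subaction property of $u$.)

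For the converse, assume \eqref{each_static} holds for every $k\in\N_0$. Fix non-negative integers $i<j$. Telescoping the hypothesis along $k=i,\ldots,j-1$ yields
\[
\sum_{n=i}^{j-1}\bigl(\varphi\circ\sigma^{n}(\underline{x})-\alpha_\varphi\bigr)=u(\sigma^{j}(\underline{x}))-u(\sigma^{i}(\underline{x})).
\]
On the one hand, Lemma~\ref{mane_calibrated} applied to $(\sigma^{i}(\underline{x}),\sigma^{j}(\underline{x}))$ gives
\(S_\varphi(\sigma^{i}(\underline{x}),\sigma^{j}(\underline{x}))\ge u(\sigma^{j}(\underline{x}))-u(\sigma^{i}(\underline{x}))\).
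On the other hand, since $\sigma^{i}(\underline{x})\in B(\sigma^{i}(\underline{x}),\sigma^{j}(\underline{x}),j-i;\varepsilon)$ for every $\varepsilon>0$, by the very definition of the Ma\~n\'e potential we have the trivial upper bound
\[
S_\varphi(\sigma^{i}(\underline{x}),\sigma^{j}(\underline{x}))\le \sum_{n=i}^{j-1}\bigl(\varphi\circ\sigma^{n}(\underline{x})-\alpha_\varphi\bigr).
\]
Combining the two displayed bounds with the telescoping identity forces equality throughout, so
\[
\sum_{n=i}^{j-1}\bigl(\varphi\circ\sigma^{n}(\underline{x})-\alpha_\varphi\bigr)=S_\varphi(\sigma^{i}(\underline{x}),\sigma^{j}(\underline{x})),
\]
which is exactly the $\varphi$-semi-static condition for the orbit of $\underline{x}$; hence $\underline{x}\in N_\varphi$.

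No serious obstacle is anticipated: both directions rest on the two-sided pinching afforded by Lemma~\ref{mane_calibrated} together with the evaluations of $S_\varphi$ along consecutive orbit points already extracted in Theorem~\ref{theorem:static}. The only thing to be careful about is to invoke the per-step identities from the proof of Theorem~\ref{theorem:static} rather than only the static condition itself, so the argument is completely mechanical once those identities are cited.
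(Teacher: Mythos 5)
Your proof is correct. For the converse direction your argument is essentially identical to the paper's: telescope the hypothesis, apply Lemma~\ref{mane_calibrated} for the lower bound $S_\varphi(\sigma^i(\underline{x}),\sigma^j(\underline{x}))\ge u(\sigma^j(\underline{x}))-u(\sigma^i(\underline{x}))$, and use the trivial upper bound coming from $\sigma^i(\underline{x})\in B(\sigma^{i}(\underline{x}),\sigma^{j}(\underline{x}),j-i;\varepsilon)$. For the forward direction you organize the argument differently from the paper. The paper works at the aggregate level: it first shows $\sum_{k=i}^{j-1}\big(\varphi(\sigma^k(\underline{x}))-\alpha_\varphi\big)=u(\sigma^j(\underline{x}))-u(\sigma^i(\underline{x}))$ for all $i<j$ by sandwiching the sum between $-S_\varphi(\sigma^j(\underline{x}),\sigma^i(\underline{x}))$ and $S_\varphi(\sigma^i(\underline{x}),\sigma^j(\underline{x}))$ via Lemma~\ref{mane_calibrated}, and then distributes this vanishing of the sum to each summand using the nonnegativity of $\varphi-\alpha_\varphi-(u\circ\sigma-u)$ coming from the subaction inequality (the paper says ``calibrated,'' but only the subaction inequality is used). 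You instead apply Lemma~\ref{mane_calibrated} directly at the single step $k\to k+1$ to the per-step identities $S_\varphi(\sigma^k(\underline{x}),\sigma^{k+1}(\underline{x}))=\varphi(\sigma^k(\underline{x}))-\alpha_\varphi$ and $S_\varphi(\sigma^{k+1}(\underline{x}),\sigma^k(\underline{x}))=-\big(\varphi(\sigma^k(\underline{x}))-\alpha_\varphi\big)$, pinching $u(\sigma^{k+1}(\underline{x}))-u(\sigma^k(\underline{x}))$ from both sides; this is a purely local argument that avoids the distribution step altogether. One small stylistic remark: you cite the per-step identities as being ``already derived inside the proof of Theorem~\ref{theorem:static},'' but they follow immediately by specializing the defining relation for $A_\varphi$ together with $A_\varphi\subset N_\varphi$ (Proposition~\ref{prop:semistatic_static}) to $i=k$, $j=k+1$, so you could state them as a direct consequence of the hypothesis rather than extracting them from another proof.
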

\begin{proof}
    Assume that $\underline{x}\in A_\varphi$.
    Since $\underline{x}\in A_\varphi\subset N_\varphi$, we have
    \[
        \sum_{k=i}^{j-1} \big(\varphi(\sigma^k(\underline{x}))-\alpha_{\varphi}\big)= -S_\varphi(\sigma^{j}(\underline{x}),\sigma^{i}(\underline{x}))
        =S_\varphi(\sigma^{i}(\underline{x}),\sigma^{j}(\underline{x}))
    \]
    for all non-negative integers $i<j$.
    By Lemma~\ref{mane_calibrated}, we obtain
    \[
        u(\sigma^{j}(\underline{x}))-u(\sigma^{i}(\underline{x}))
        \le S_\varphi(\sigma^{i}(\underline{x}),\sigma^{j}(\underline{x}))
    \]
    and
     \[
        -S_\varphi(\sigma^{j}(\underline{x}),\sigma^{i}(\underline{x}))\le u(\sigma^{j}(\underline{x}))-u(\sigma^{i}(\underline{x})).
    \]
     Therefore, it holds that
     \[
        \sum_{k=i}^{j-1} \big(\varphi(\sigma^k(\underline{x}))-\alpha_{\varphi}\big)=
        u(\sigma^{j}(\underline{x}))-u(\sigma^{i}(\underline{x})).
     \]
    We rewrite
    \[
    u(\sigma^{j}(\underline{x}))-u(\sigma^{i}(\underline{x}))=\sum_{k=i}^{j-1} \big(u(\sigma^{k+1}(\underline{x}))-u(\sigma^{k}(\underline{x}))\big)
    \]
    and compute
    \begin{align}\label{sum_static}
    \sum_{k=i}^{j-1} \Big\{\big(\varphi(\sigma^k(\underline{x}))-\alpha_{\varphi}\big)-\big(u(\sigma^{k+1}(\underline{x}))-u(\sigma^{k}(\underline{x}))\big)\Big\}=
       0.
    \end{align}
    Since $u$ is a calibrated subaction of $\varphi$,
    we have
    \[
    \big(\varphi(\sigma^k(\underline{x}))-\alpha_{\varphi}\big)-\big(u(\sigma^{k+1}(\underline{x}))-u(\sigma^{k}(\underline{x}))\big)\ge 0,\quad k\in\N_0
    \]
    and thus we deduce that
    \[
    \big(\varphi(\sigma^k(\underline{x}))-\alpha_{\varphi}\big)-\big(u(\sigma^{k+1}(\underline{x}))-u(\sigma^{k}(\underline{x}))\big)=0,\quad k\in\N_0
    \]
    from \eqref{sum_static}.

    Conversely, assume that \eqref{each_static} holds for each $k\in\N_0$.
    Then we have
    \begin{align*}
    \sum_{k=i}^{j-1} \big(\varphi(\sigma^k(\underline{x}))-\alpha_{\varphi}\big)&=\sum_{k=i}^{j-1} \big(u(\sigma^{k+1}(\underline{x}))-u(\sigma^{k}(\underline{x}))\big)\\
    &=
        u(\sigma^{j}(\underline{x}))-u(\sigma^{i}(\underline{x}))
        \le S_\varphi(\sigma^i(\underline{x}),\sigma^j(\underline{x}))
    \end{align*}
    by Lemma~\ref{mane_calibrated}.
    Since $\sum_{k=i}^{j-1} \big(\varphi(\sigma^k(\underline{x}))-\alpha_{\varphi}\big)\ge  S_\varphi(\sigma^i(\underline{x}),\sigma^j(\underline{x}))$ is trivial,
    we see that $\underline{x}\in N_\varphi$.
\end{proof}

\section{Variational method applied to the Aubry set}\label{VP}
In this section, we consider the case that the potential function $\varphi:X\to\R$ is 2-locally constant, i.e., 
\[
\varphi(\underline{x})=h(x_0,x_1),\quad \underline{x}=x_0x_1x_2\ldots,
\]
for some Lipschitz continuous function $h:[0,1]^2\to \R$ under suitable assumptions (see below for the precise assumptions for $h$).
In this case, we can obtain much more explicit information about the elements in the Mather set and the Aubry set for $\varphi$ by using variational techniques developed in \cite{Ban88, Yu22}.

Firstly, the following proposition is easily shown:
\begin{proposition}
\label{proposition:Lip_Lip}
Let $\varphi:X\to \R$ be a function depending on only the first two coordinates, i.e., for all $\underline{x}=x_0x_1x_2\ldots$, $\varphi(\underline{x})=\varphi(x_0,x_1)$.
Then $\varphi$ is Lipschitz continuous as a function on $X$ with respect to the metric $d$ on $X$ if and only if $\varphi$ is Lipschitz continuous as a function on $[0,1]^2$ with respect to the Euclidian metric $d_{\R^2}$ on $[0,1]^2$.
\end{proposition}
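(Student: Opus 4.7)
The proposition is a straightforward comparison between the symbolic metric $d$ on $X$ and the Euclidean metric $d_{\R^2}$ on the cross section $[0,1]^2$. The plan is to prove both implications by direct estimation, exploiting the fact that $d(\underline{x},\underline{y})$ always dominates $|x_0-y_0|+|x_1-y_1|/2$, while on sequences that differ only in the first two entries the two sides become equal. This observation essentially forces the equivalence.

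For the ``if'' direction, I would assume $h\colon[0,1]^2\to\R$ is Lipschitz with constant $L$ with respect to $d_{\R^2}$ and write $\varphi(\underline{x})=h(x_0,x_1)$. Then for arbitrary $\underline{x},\underline{y}\in X$, bound
\[
|\varphi(\underline{x})-\varphi(\underline{y})|\le L\, d_{\R^2}\bigl((x_0,x_1),(y_0,y_1)\bigr)\le L\bigl(|x_0-y_0|+|x_1-y_1|\bigr),
\]
and then use $|x_0-y_0|+|x_1-y_1|\le 2\bigl(|x_0-y_0|+|x_1-y_1|/2\bigr)\le 2\, d(\underline{x},\underline{y})$. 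This yields a Lipschitz constant of at most $2L$ for $\varphi$ on $X$.

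For the ``only if'' direction, I would assume $\varphi$ is Lipschitz on $X$ with constant $L'$, and probe $h$ by test sequences. Given $(a,b),(c,e)\in[0,1]^2$, define $\underline{x}=ab000\cdots$ and $\underline{y}=ce000\cdots$ so that $d(\underline{x},\underline{y})=|a-c|+|b-e|/2$. Then
\[
|h(a,b)-h(c,e)|=|\varphi(\underline{x})-\varphi(\underline{y})|\le L'\bigl(|a-c|+|b-e|/2\bigr)\le L'\bigl(|a-c|+|b-e|\bigr),
\]
and the $\ell^1$--$\ell^2$ inequality $|a-c|+|b-e|\le\sqrt{2}\, d_{\R^2}((a,b),(c,e))$ converts this to a Euclidean Lipschitz bound with constant $\sqrt{2}L'$.

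There is no real obstacle here; the whole argument is a two-line metric comparison in each direction. The only thing to be careful about is keeping the two meanings of $\varphi$ (as a function on $X$ and as a function on $[0,1]^2$, tacitly identified through $\varphi(\underline{x})=h(x_0,x_1)$) notationally clean, and noting that the Lipschitz constants are not identical but comparable up to the factors $2$ and $\sqrt{2}$ above.
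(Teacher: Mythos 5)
Your proof is correct and follows essentially the same route as the paper's: in the ``only if'' direction you probe $h$ using the same test sequences $ab\,0^\infty$ and $ce\,0^\infty$ (the paper uses $x_0x_1\,0^\infty$), and in both directions you use the same $\ell^1$--$\ell^2$ comparison on $[0,1]^2$ and the observation that $|x_0-y_0|+|x_1-y_1|/2\le d(\underline{x},\underline{y})$. The resulting Lipschitz constants ($2L$ and $\sqrt{2}L'$) match the paper's as well.
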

\begin{proof}
	Suppose that there exists $L_\varphi>0$ such that
	$|\varphi(\underline{x})-\varphi(\underline{y})|\le L_\varphi d(\underline{x},\underline{y})$
	for all $\underline{x},\underline{y}\in X$.
	Taking arbitrary $x_0,x_1,y_0,y_1\in [0,1]$, we have
	\begin{align*}
		|\varphi(x_0,x_1)-\varphi(y_0,y_1)|&=|\varphi(x_0x_10^\infty)-\varphi(y_0y_10^\infty)|\\
		&\le L_\varphi(|x_0-y_0|+|x_1-y_1|/2)\\
		L_\varphi\sqrt{2}d_{\R^2}((x_0,x_1),(y_0,y_1)).
	\end{align*}
	Conversely, suppose that there exists $L_\varphi>0$ such that
	$|\varphi(x_0,x_1)-\varphi(y_0,y_1)|\le L_\varphi d_{\R^2}((x_0,x_1),(y_0,y_1))$
	for all $(x_0,x_1),(y_0,y_1))\in [0,1]^2$.
	Then
	\begin{align*}
		|\varphi(\underline{x})-\varphi(\underline{y})|
        &=|\varphi(x_0,x_1)-\varphi(y_0,y_1)|\\
		&\le L_\varphi d_{\R^2}((x_0,x_1),(y_0,y_1))\\
		&\le 2L_\varphi(|x_0-y_0|+|x_1-y_1|/2)
		\le 2L_\varphi d(\underline{x},\underline{y}),
	\end{align*}
	which complete the proof.
	\end{proof}
    \begin{remark}
        For a subshift of finite type with a finite set of symbols, locally constant functions are always Lipschitz continuous with respect to a natural metric on its symbolic space.
        However, for the symbolic dynamics with uncountable symbols $[0,1]$, locally constant functions are not always Lipschitz continuous with respect to the metric $d$ on $X$.
    \end{remark}
Hereafter, we always assume
$(H_3)$ and $(H_4)$ for the Lipschitz continuous function $\varphi(\underline{x})=h(x_0,x_1)$,
where
the assumptions $(H_3)$ and $(H_4)$ are defined by:
\begin{itemize}
    \item[$(H_3)$]
    If $\xi_1 < \xi_2$ and $\eta_1 < \eta_2$,
        then \[
        h(\xi_1,\eta_1) + h(\xi_2,\eta_2) < h(\xi_1,\eta_2) + h(\xi_2,\eta_1).
        \]
    \item[$(H_4)$]
    If both $(x_{-1},x_0,x_{1})$ and $(x^{'}_{-1},x_0,x^{'}_{1})$
    with $(x_{-1},x_0,x_{1}) \neq (x^{'}_{-1},x_0,x^{'}_{1})$ are {\it{minimal}}, then \[(x_{-1}-x^{'}_{-1})(x_{1}-x^{'}_{1})<0.\]
    \end{itemize}
Here, we give the definition of the word {\it minimal} :
\begin{definition}[Minimal]
Fix $k,l \in \N_{0}$ with $k < l$ arbitrarily.
A finite word $\{x_i\}_{i=k}^{l}$ is said to be minimal if, for any $\{y_i\}_{i=k}^{l}$ with $y_k=x_k$ and $y_l = x_l$, we have:
\[
\sum_{i=k}^{l-1} h(x_i,x_{i+1}) \le \sum_{i=k}^{l-1} h(y_i,y_{i+1}).
\]
Moreover, an infinite word $\{x_i\}_{i \in \N_{0}}$ is said to be minimal if $\{x_i\}_{i=k}^{l}$ is minimal for any $k,l$ with $k < l$.
\end{definition}
 \begin{remark}\label{remark:assumption}
We state some remarks about the settings mentioned above.
 \begin{enumerate}
     \item In $(H_3)$, the equality $h(\xi_1,\eta_1) + h(\xi_2,\eta_2) = h(\xi_1,\eta_2) + h(\xi_2,\eta_1)$ holds if $\xi_1=\xi_2$ or $\eta_1=\eta_2$.
     \item The assumptions $(H_3)$ and $(H_4)$ hold if  $h$ satisfies the twist condition $D_1D_2h<0$.
     In fact, for $\xi_1<\xi_2$ and $\eta_1<\eta_2$, we have:
\begin{align*}
    0&>\int_{\eta_1}^{\eta_2} \int_{\xi_1}^{\xi_2} D_1D_2H(x,y) dx dy\\
    &=\int_{\eta_1}^{\eta_2} D_2H(\xi_2,y)-D_2H(\xi_1,y)dy\\
    &=H(\xi_1,\eta_1) + H(\xi_2.\eta_2) - H(\xi_1,\eta_2) - H(\xi_2,\eta_1).
\end{align*}
This inequality implies $(H_3)$.
Next, we show $(H_4)$.
Suppose that both $(x_{-1},x_0,x_1)$ and $(x^\ast_{-1},x_0,x^\ast_1)$ are minimal and
\[(x_{-1},x_0,x_1) \neq (x^\ast_{-1},x_0,x^\ast_1). \]
Clearly,
\begin{align*}
    D_1 H(x_0,x_1) + D_2H(x_{-1},x_0)&=0, and\\
    D_1 H(x_0,x_1^\ast) + D_2H(x^\ast_{-1},x_0)&=0.
\end{align*}
Since \( D_1D_2H  < 0 \), \( D_1H(x, y) \) is monotonically decreasing with respect to \( y \),  and \( D_2H(x, y) \) is monotonically decreasing with respect to \( x \).
Hence, if two minimal segment satisfies $x_{-1}-x_{-1}^\ast<0$ and $x_1-x_1^\ast<0$,
then
\begin{align*}
    D_1 H(x_0,x_1) + D_2H(x_{-1},x_0) >D_1 H(x_0,x_1^\ast) + D_2H(x^\ast_{-1},x_0),
\end{align*}
which is contradiction.
     \item  The above notations and labels are derived from \cite{Ban88}. In his paper, $h$ is considered as a fuction on $\R^2$ satisfying $(H_1)-(H_4)$, where $(H_1)$ and $(H_2)$ are given by:
     \begin{itemize}
    \item[$(H_1)$] $h(\xi,\eta)=h(\xi+1,\eta+1)$ for all $(\xi,\eta) \in \R^2$, and
    \item[$(H_2)$] $\displaystyle{\lim_{|\eta| \to \infty}h(\xi,\xi+\eta) = \infty}$ \ \text{uniformly in} \ $\xi$.
    \end{itemize}
    Note that the condition $(H_2)$ holds if $h$ satisfies $(H_1)$ and the twist condition.
    For the proof, integrate $D_2D_1h$ over the triangular region bounded by three points $(\xi,\xi)$, $(\xi,\xi+\eta)$, and $(\xi+\eta,\xi+\eta)$.
    In addition, the fact that the differentiability of $h$ is no longer needed is useful when considering geodesics on $\T^2$, for example (see Section $6$ in \cite{Ban88} for the detail).   
 \end{enumerate}

 \end{remark}

Let $h^\ast= \min_{x \in [0,1]} h(x,x)$.
Set $X(n) {(\subset X=[0,1]^{\N_0})}$  and $\mathrm{m} \ {(\subset [0,1])}$ by
\[X(n) = \{ \underline{x} \in X \mid x_i=x_{n+i} \ \text{for all} \ i \in \N_{0} \},\]
and
\[
\mathrm{m}=\{a\in [0,1] \mid h(a,a)=h^\ast\}.
\]

Since $h$ is non-constant by $(H_3)$ and $h$ is continuous, 
the set $\mathrm{m}$ is nonempty, compact and
$\mathrm{m} \subsetneq  [0,1]$.
Firstly, we introduce the result of \cite{Ban88} and \cite{Yu22}.
\begin{lemma}
\label{lemma:Ban33}
    Fix $n \in \N$.
    Then
    $\displaystyle{\sum_{i=0}^{n-1}(h(x_i,x_{i+1})-h^\ast) \ge 0}$
    for any $\underline{x} \in X(n)$.
    Moreover, the equality is true if and only if there exists $a \in \mathrm{m}$ satisfying $\underline{x}_i = a$ for $i=0,\cdots,n-1$.
\end{lemma}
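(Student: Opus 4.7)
The plan is to prove both the lower bound and the equality characterization simultaneously by induction on the period $n$. The base case $n=1$ is immediate: $h(x_0,x_0) - h^\ast \ge 0$ with equality iff $x_0 \in \mathrm{m}$. For $n \ge 2$ and $\underline{x} \in X(n)$ (so that $x_n = x_0$), I would split into two cases according to whether some two cyclically adjacent entries coincide.

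In the first case, suppose $x_i = x_{i+1}$ for some $i$ (indices taken modulo $n$). Form the collapsed sequence $\underline{x}' \in X(n-1)$ obtained by deleting the entry at position $i+1$; using $x_{i+1}=x_i$ a direct rewriting of the sum yields
\[
\sum_{j=0}^{n-1} h(x_j,x_{j+1}) \;-\; \sum_{j=0}^{n-2} h(x'_j,x'_{j+1}) \;=\; h(x_i,x_i).
\]
Applying the inductive hypothesis to $\underline{x}'$ and using $h(x_i,x_i) \ge h^\ast$ proves the inequality; equality forces $\underline{x}'$ to be constant with value in $\mathrm{m}$ and $x_i \in \mathrm{m}$, whence the original sequence is constant in $\mathrm{m}$ as required.

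In the second case, assume $x_j \ne x_{j+1}$ for every $j$. Then $\underline{x}$ attains its global maximum at some index $i_\ast$ at which both neighbours are strictly smaller: $x_{i_\ast-1} < x_{i_\ast}$ and $x_{i_\ast+1} < x_{i_\ast}$. Applying $(H_3)$ with $\xi_1 = x_{i_\ast-1},\,\xi_2 = x_{i_\ast},\,\eta_1 = x_{i_\ast+1},\,\eta_2 = x_{i_\ast}$ produces the strict inequality
\[
h(x_{i_\ast-1},x_{i_\ast}) + h(x_{i_\ast},x_{i_\ast+1}) \;-\; h(x_{i_\ast-1},x_{i_\ast+1}) \;>\; h(x_{i_\ast},x_{i_\ast}) \;\ge\; h^\ast.
\]
The left-hand side equals the difference between $\sum_{j=0}^{n-1} h(x_j,x_{j+1})$ and the corresponding sum for the collapsed sequence $\underline{x}' \in X(n-1)$ obtained by deleting $x_{i_\ast}$; combining with the inductive hypothesis for $\underline{x}'$ yields the strict inequality $\sum_{j=0}^{n-1}(h(x_j,x_{j+1}) - h^\ast) > 0$, so no sequence in this case can realise equality. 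Consequently the equality statement is controlled entirely by the first case and reduces to constant sequences valued in $\mathrm{m}$.

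The main delicate point is the second case: one has to verify that a non-constant cyclic sequence with all consecutive entries distinct really admits a strict local maximum in both neighbours (which holds because the global maximum is attained somewhere, and by the no-collision hypothesis its two neighbours are strictly smaller), and then to match the four values to the orderings required by $(H_3)$ to obtain strict inequality. The argument uses only $(H_3)$; the stronger assumption $(H_4)$ is not required for this lemma.
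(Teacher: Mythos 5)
Your proof is correct, and it takes a genuinely different route from the paper's. The paper follows the classical Aubry--Mather surgery argument from Bangert: first observe by compactness that a minimizer of $S_n\varphi$ over $X(n)$ exists, then assume it is nonconstant, shift it by $\sigma^M$ so the two minimizers cross, and use the min/max envelopes $\underline{w},\underline{z}$ together with $(H_3)$ to produce a strictly better pair of cycles, contradicting minimality; the degenerate touching case ($x_{l^*}=y_{l^*}$) is disposed of with $(H_4)$. Your argument is a direct induction on the period $n$: either some cyclically adjacent pair is equal, in which case you delete it and pay exactly $h(x_i,x_i)\ge h^\ast$; or all adjacent pairs differ, in which case you delete a strict peak and $(H_3)$ gives that the cost paid is strictly more than $h(x_{i_\ast},x_{i_\ast})\ge h^\ast$. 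This buys several things: you avoid the existence-of-minimizers step, you avoid the crossing lemma and the case analysis on number of crossings, and — as you correctly observe — you never need $(H_4)$, so the lemma actually holds under $(H_3)$ alone. The paper's approach, on the other hand, is the one that scales to the full Aubry--Mather setting (rotation numbers, minimal geodesics, etc.), which is presumably why the authors follow Bangert there even for this rotation-zero special case. Both proofs are valid; yours is shorter and uses a weaker hypothesis for this specific statement.
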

This claim is essentially the same as Lemma 2.5 of \cite{Yu22}.
The proof is almost the same as the proof of the case $p=0$ in Theorem 3.3 of \cite{Ban88}.

\begin{proof}[Proof of Lemma \ref{lemma:Ban33}]
    Fix $n \in \N$ arbitrarily.
     It is easily seen that if $\underline{x}^\ast$ is a minimizer in $X(n)$, i.e., satisfies:
    \[
    S_{n}\varphi(\underline{x}^\ast)
    =\min_{\underline{x} \in X(n)} S_{n}\varphi(\underline{x}),
    \]
    so is $\sigma^k(\underline{x}^\ast)$ for any $k \in \N$.
    Set $m$ and $M$ with $0 \le m,M \le n-1$ for each $x \in X(n)$ by:
    \begin{align}
    \label{eq:min_max}
    x_m = \min_{0 \le i \le n-1} x_i,\
    x_M =  \max_{0 \le i \le n-1} x_i
    \end{align}
    For the proof of the claim,
    it suffices to show the following claim:
    \begin{claim}
        $x_m=x_M$  if $\underline{x}$ is a minimizer in $X(n)$.
    \end{claim}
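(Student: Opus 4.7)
My plan is to argue by contradiction via the Aubry min/max (crossing) trick, using $(H_3)$ and $(H_4)$ in tandem. Suppose $\underline{x}\in X(n)$ is a minimizer with $x_m<x_M$, so $\underline{x}$ is non-constant on $\{0,\ldots,n-1\}$; I aim to build competitors whose total action contradicts the minimality of $\underline{x}$.

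Two preliminaries are essentially automatic. First, any minimizer in $X(n)$ is minimal in the finite-word sense of the definition above: perturbing any finite block of coordinates while fixing its endpoints keeps the perturbed sequence in $X(n)$, so each finite subword minimizes among words with matching endpoints. Second, every shift $\sigma^k(\underline{x})$ again lies in $X(n)$ with the same action, and since $\underline{x}$ is non-constant I can pick $k\in\{1,\ldots,n-1\}$ with $\underline{y}:=\sigma^k(\underline{x})\neq\underline{x}$.

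The main step is to form the coordinate-wise $\min$ and $\max$, namely $\underline{z}_i=\min(x_i,y_i)$ and $\underline{w}_i=\max(x_i,y_i)$; both lie in $X(n)$ by periodicity. Applying $(H_3)$ at each transition $i\to i+1$ yields
\[
S_n\varphi(\underline{z})+S_n\varphi(\underline{w})\le S_n\varphi(\underline{x})+S_n\varphi(\underline{y}),
\]
with strict inequality at any $i$ where $(x_i-y_i)(x_{i+1}-y_{i+1})<0$. Minimality of $\underline{x}$ forces equality throughout, so $\underline{z},\underline{w}$ are themselves minimizers (hence minimal), and no such transversal crossing between $\underline{x}$ and $\underline{y}$ is allowed. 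On the other hand, $\sum_i(x_i-y_i)=0$ together with $\underline{y}\neq\underline{x}$ forces $x_i-y_i$ to change sign, and the absence of transversal crossings then forces an index $i$ with $x_i=y_i$ but $x_{i+1}\neq y_{i+1}$. At such $i$, applying $(H_4)$ to the minimal triples $(x_{i-1},x_i,x_{i+1})$ and $(y_{i-1},y_i,y_{i+1})$ yields $x_{i-1}\neq y_{i-1}$ with $(x_{i-1}-y_{i-1})(x_{i+1}-y_{i+1})<0$. Whichever of $\underline{z}$, $\underline{w}$ agrees with $\underline{x}$ at positions $i-1$ and $i$ but disagrees at $i+1$ is again a minimal sequence, so a second application of $(H_4)$ to that triple against $(x_{i-1},x_i,x_{i+1})$ produces the forbidden $0<0$.

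The main obstacle is the tangential meeting case: the $(H_3)$ rearrangement inequality yields no strict gain when $\underline{x}$ and $\underline{y}$ merely touch, so $(H_4)$ must carry the argument twice — first to describe the local geometry at a touching point, and then to contradict the minimality of the newly constructed sequence $\underline{z}$ or $\underline{w}$. Once this obstacle is cleared, the contradiction is immediate, and we conclude $x_m = x_M$.
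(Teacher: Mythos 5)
Your strategy coincides with the paper's: form the coordinate-wise $\min$/$\max$ of $\underline{x}$ and a translate $\sigma^k\underline{x}$, use $(H_3)$ to forbid transversal crossings and certify both resulting sequences as minimizers in $X(n)$, then invoke $(H_4)$ twice at a tangential touching index to reach the contradiction $0<0$. The paper fixes $m=0$, takes $\underline{y}=\sigma^M\underline{x}$, and splits cases at a single crossing $l^*$, whereas you argue globally over all crossings, but the key steps and the use of $(H_3)$, $(H_4)$ are the same.

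There is, however, one false step in your preliminaries: ``perturbing any finite block of coordinates while fixing its endpoints keeps the perturbed sequence in $X(n)$.'' Changing only $x_1$ (say) destroys $n$-periodicity unless you also change $x_{1+n},x_{1+2n},\ldots$, so the perturbed sequence generally leaves $X(n)$. What you actually need --- and all you ever use, since $(H_4)$ is only applied to triples --- is that every triple $(x_{i-1},x_i,x_{i+1})$ of a minimizer $\underline{x}$ in $X(n)$ is minimal, and this does hold by a slightly different argument: perturb coordinate $i$ modulo $n$ (that is, all copies $x_{i+kn}$ at once) to an arbitrary $b'\in[0,1]$; the result stays in $X(n)$, and for $n\ge 3$ the change in $S_n\varphi$ is exactly $h(x_{i-1},b')+h(b',x_{i+1})-h(x_{i-1},x_i)-h(x_i,x_{i+1})$, which must be $\ge 0$ by minimality in $X(n)$ (the $n=2$ case is a one-line check, and $n=1$ is vacuous since then $x_m=x_M$ trivially). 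With this repair your argument is complete; the paper relies on the same triple-minimality of $\underline{x}$, $\underline{y}$, $\underline{w}$, $\underline{z}$ without spelling it out, following Bangert.
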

    Suppose that the claim is failed, i.e., $\underline{x}$ is a minimizer in $X(n)$ and $x_m < x_M$.
    We consider only the case $m=0$.
    The other cases can be shown in a similar way.

    Firstly, we introduce the definition of  \textit{cross}.
     We say the segments $\{\xi_i\}_{0}^{n-1}$ and $\{\eta_i\}_{0}^{n-1}$ \textit{cross} if
     $(\xi_{l}-\eta_{l})( \xi_{l+1}-\eta_{l+1})\le 0$
     for some $l\in \{0,\ldots,n-1\}$.
      Let $\underline{y}=\sigma^M(\underline{x})$, i.e., $y_i=x_{i+M}$ for $i\in\N_0$.
      Note that $\{x_i\}_{0}^{n-1}$ and $\{y_i\}_{0}^{n-1}$ cross at least once
    because if not we have $x_i \le y_i$ for all $i \in \{1,\ldots,n-1\}$ 
    and thus the inequality $x_0<y_{0}$ yields $\sum_{j=0}^{n-1} x_j<\sum_{j=0}^{n-1} y_j$,
    but the periodicity of $\underline{x}$ and $\underline{y}$ yields $\sum_{j=0}^{n-1}y_i=\sum_{j=0}^{n-1}x_{i+M}=\sum_{j=0}^{n-1}x_{i}$.

    Suppose that $\{x_i\}_{0}^{n-1}$ and $\{y_i\}_{0}^{n-1}$ cross only once.
    The other cases can be shown by repeating the following discussion.
     Let $l^*$ be the largest number among $\{0,\ldots,n-1\}$ such that $x_k\le y_k$ for all $k\in\{1,\ldots,l^*\}$. Then we have $x_{l^*}\le y_{l^*}$ and $x_{l^*+1}> y_{l^*+1}$.   
    Define $\underline{w},\underline{z}\in X(n)$ by $w_i=\min\{x_i,y_i\}, z_i=\max\{x_i,y_i\}$ for all $i\in\N_0$.
    When $x_{l^*}\neq y_{l^*}$, 
    by $(H_3)$, $x_{l^*}<y_{l^*}$ and $y_{l^*+1}<x_{l^*+1}$
    give
    \begin{align*}
    h(w_{l^*},w_{l^*+1})+h(z_{l^*},z_{l^*+1})
    &=h(x_{l^*},y_{l^*+1})+h(y_{l^*},x_{l^*+1})\\
    &<h(x_{l^*},x_{l^*+1})+h(y_{l^*},y_{l^*+1}).
    \end{align*}
    As a result, it holds that:
    \[
    S_n \varphi(\underline{w}) + S_n \varphi(\underline{z})
    < S_n \varphi(\underline{x}) + S_n \varphi(\sigma^M(\underline{x}))
    =2\min_{\underline{x}' \in X(n)} S_{n}\varphi(\underline{x}').
    \]
    Therefore, at least one of the following inequalities holds:  
    \[
    S_n \varphi(\underline{w}) < \min_{\underline{x}' \in X(n)} S_n \varphi(\underline{x}')
    \quad \text{or} \quad
    S_n \varphi(\underline{z}) < \min_{\underline{x}' \in X(n)} S_n \varphi(\underline{x}'),
    \]
which is a contradiction (note that $\underline{w},\underline{z}\in X(n)$).
    When $x_{l^*}=y_{l^*}$, we have $(x_{l^*-1}-y_{l^*-1})( x_{l^*+1}-y_{l^*+1})\le 0$, but the equality cannot occur by $(H_4)$ and the minimality of $\underline{x}$ and $\underline{y}$. Thus  $x_{l^*-1}< y_{l^*-1}$ and $x_{l^*+1}> y_{l^*+1}$ hold and we have:
    \begin{align*}
    h(w_{l^*-1},w_{l^*})+h(z_{l^*-1},z_{l^*})
    &=h(x_{l^*-1},x_{l^*})+h(y_{l^*},y_{l^*}), \ \text{and}\\   h(w_{l^*},w_{l^*+1})+h(z_{l^*},z_{l^*+1})&=h(y_{l^*},y_{l^*+1})+h(x_{l^*},x_{l^*+1}),
    \end{align*}
    as seen in Remark \ref{remark:assumption} (1). 
    Therefore, we obtain
    \[
    S_n \varphi(\underline{w}) + S_n \varphi(\underline{z})
    = S_n \varphi(\underline{x}) + S_n \varphi(\sigma^M(\underline{x}))
    =2\min_{\underline{x}' \in X(n)} S_{n}\varphi(\underline{x}').
    \]
    This implies that both $\underline{w}$ and $\underline{z}$ are also minimal, which is a contradiction for $(H_4)$.
\end{proof}
Using Lemma \ref{lemma:Ban33}, we can easily show a part of our main theorem.
\begin{lemma}[cf. Main Theorem~\ref{mainthm:VP}. (1)]
\label{lemm:a=h}
 $\alpha_\varphi=h^\ast$ 
\end{lemma}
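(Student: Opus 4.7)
The plan is to prove $\alpha_\varphi = h^\ast$ by establishing both inequalities separately, using Lemma~\ref{lemma:Ban33} for the lower bound and an explicit periodic measure for the upper bound.

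For the upper bound $\alpha_\varphi \le h^\ast$, I would pick any $a \in \mathrm{m}$ (nonempty by compactness of $[0,1]$ and continuity of $x \mapsto h(x,x)$), and consider the Dirac measure $\delta_{a^\infty}$ supported at the fixed point $a^\infty = aaa\ldots$ of $\sigma$. This measure is $\sigma$-invariant, and
\[
\int \varphi\, d\delta_{a^\infty} = \varphi(a^\infty) = h(a,a) = h^\ast,
\]
so by the definition of $\alpha_\varphi$ we immediately get $\alpha_\varphi \le h^\ast$.

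For the lower bound $\alpha_\varphi \ge h^\ast$, I would use Jenkinson's formula \eqref{eq:Jenkinson}, which reduces the task to showing $\liminf_{n\to\infty} \inf_{\underline{x} \in X} \tfrac{1}{n} S_n \varphi(\underline{x}) \ge h^\ast$. Given any $\underline{x} \in X$ and any $n \in \mathbb{N}$, I would form the periodic point $\tilde{\underline{x}} = (x_0 x_1 \cdots x_{n-1})^\infty \in X(n)$. Applying Lemma~\ref{lemma:Ban33} to $\tilde{\underline{x}}$ yields $S_n \varphi(\tilde{\underline{x}}) \ge n h^\ast$. The key observation is that $S_n\varphi(\underline{x})$ and $S_n\varphi(\tilde{\underline{x}})$ differ only in their last term, namely by $h(x_{n-1},x_n) - h(x_{n-1},x_0)$, whose absolute value is bounded by $2\|h\|_\infty$ (independent of $n$ and $\underline{x}$). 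Therefore
\[
\frac{1}{n} S_n \varphi(\underline{x}) \ge h^\ast - \frac{2\|h\|_\infty}{n},
\]
and taking $\inf$ over $\underline{x}$ and then $\liminf$ as $n \to \infty$ gives $\alpha_\varphi \ge h^\ast$.

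The approach is essentially routine once Lemma~\ref{lemma:Ban33} is in hand; the main subtlety is recognizing the small mismatch between the Birkhoff sum of $\underline{x}$ and that of its periodization, but this boundary-term issue is harmless because $h$ is bounded on $[0,1]^2$ and the difference is a single uniformly bounded quantity that vanishes after dividing by $n$. Combining the two inequalities completes the proof.
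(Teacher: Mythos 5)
Your proof is correct and takes essentially the same approach as the paper: the lower bound comes from applying Lemma~\ref{lemma:Ban33} to the periodization $(x_0\cdots x_{n-1})^\infty$ and controlling the single boundary term $h(x_{n-1},x_n)-h(x_{n-1},x_0)$, and the upper bound comes from the fixed point $a^\infty$ for $a\in\mathrm{m}$. The only cosmetic differences are that you bound the boundary term by $2\|h\|_\infty$ rather than $h_{\max}-h_{\min}$, and that for the upper bound you invoke the invariant measure $\delta_{a^\infty}$ directly rather than plugging $a^\infty$ into Jenkinson's formula; both are equivalent.
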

\begin{proof}
The continuity of $h$ and Lemma \ref{lemma:Ban33} imply that for any $\underline{x} \in X$,
\begin{align*}
  \frac{1}{n}S_n\varphi(\underline{x})
    &=\frac{1}{n} (h(x_0,x_{1})+\cdots+h(x_{n-1},x_{0})+h(x_{n-1},x_{n})-h(x_{n-1},x_{0}))\\
    &\ge \frac{1}{n} (h(x_0,x_{1})+\cdots+h(x_{n-1},x_{0})) -\frac{1}{n} (h_{\max}-h_{\min}) \\
    &\ge   h^\ast - \frac{1}{n}(h_{\max}-h_{\min}) .\
\end{align*}
By $\eqref{eq:Jenkinson}$ in Section~\ref{sec:intro}, we have
\[
\alpha_\varphi=\inf_{\underline{x}\in X} \liminf_{n\to \infty} \frac{1}{n} S_n\varphi(\underline{x})\ge h^\ast.
\]
Moreover, taking $\underline{x}^\ast=a^\infty$ for some $a \in \mathrm{m}$, we obtain
\begin{align*}
    \frac{1}{n}S_n\varphi(\underline{x}^\ast)=h^\ast,
\end{align*}
which implies that
$\alpha_\varphi=h^\ast.$
\end{proof}

Applying Lemma \ref{lemma:Ban33} and \ref{lemm:a=h}, we immediately obtain an explicit formula for optimizing periodic measures.
\begin{theorem}[cf. Main Theorem~\ref{mainthm:VP}. (2)] Let $\varphi:X\to \R$ be a 2-locally constant function $\varphi(\underline{x})=h(x_0,x_1)$ where $h:[0,1]^2\to\R$ is a Lipschitz continuous function on $[0,1]^2$ with $(H_3)$ and $(H_4)$. Then
    \[
    \mathcal{M}_{{\rm min}}(\varphi)\cap \mathcal{M}^{\mathrm{p}}=\{\delta_{a^\infty}\mid a\in \mathrm{m}\},
    \]
    where $\mathcal{M}^{\mathrm{p}}$ stands for the set of invariant probability measures supported on a single periodic orbit. 
\end{theorem}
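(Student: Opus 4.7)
The plan is to deduce this theorem as an immediate corollary of Lemma \ref{lemma:Ban33} together with the identification $\alpha_\varphi = h^\ast$ provided by Lemma \ref{lemm:a=h}. Any $\mu \in \mathcal{M}^{\mathrm{p}}$ is of the form $\mu = \frac{1}{n}\sum_{i=0}^{n-1}\delta_{\sigma^i(\underline{x})}$ for some $n \in \N$ and some $\underline{x} \in X(n)$, so I would first rewrite the relevant integral as
\[
\int \varphi \, d\mu \;=\; \frac{1}{n} S_n\varphi(\underline{x}) \;=\; \frac{1}{n}\sum_{i=0}^{n-1} h(x_i, x_{i+1}),
\]
where the indices are taken modulo $n$ thanks to $\underline{x} \in X(n)$.

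The condition $\mu \in \mathcal{M}_{\min}(\varphi)$ is equivalent to $\int \varphi \, d\mu = \alpha_\varphi$, and by Lemma \ref{lemm:a=h} this becomes
\[
\sum_{i=0}^{n-1}\bigl(h(x_i,x_{i+1}) - h^\ast\bigr) = 0.
\]
I would then invoke the rigidity half of Lemma \ref{lemma:Ban33}: the sum above is always non-negative, and equality forces the existence of some $a \in \mathrm{m}$ with $x_i = a$ for all $0 \le i \le n-1$. Because $\underline{x} \in X(n)$, this in turn forces $\underline{x} = a^\infty$, so the supporting periodic orbit collapses to a single fixed point and $\mu = \delta_{a^\infty}$.

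For the reverse inclusion I would just verify directly that, for each $a \in \mathrm{m}$, the sequence $a^\infty$ is a fixed point of $\sigma$, so $\delta_{a^\infty} \in \mathcal{M}^{\mathrm{p}}$, and $\int \varphi \, d\delta_{a^\infty} = h(a,a) = h^\ast = \alpha_\varphi$. I do not anticipate any essential obstacle here; the only minor subtlety is that the same periodic orbit sits in $X(kn)$ for every $k \in \N$, but Lemma \ref{lemma:Ban33} applied to any such $k$ still forces all coordinates to coincide with a single element of $\mathrm{m}$, so the conclusion is unaffected.
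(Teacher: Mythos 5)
Your proposal is correct and takes exactly the route the paper intends: the paper states this result with no written proof, remarking only that it follows immediately from Lemma~\ref{lemma:Ban33} and Lemma~\ref{lemm:a=h}, and your argument is precisely that deduction spelled out. The rigidity clause of Lemma~\ref{lemma:Ban33} gives the forward inclusion, the computation $\int\varphi\,d\delta_{a^\infty}=h(a,a)=h^\ast=\alpha_\varphi$ gives the reverse, and your remark about $X(kn)$ correctly disposes of the only possible ambiguity in choosing the period.
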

Next, we give an explicit characterization for elements in the Aubry set of $\varphi$.
Consider the distance between a point $x\in[0,1]$ to the closed set $\mathrm{m}$:
\[d_{\R}(x,\mathrm{m})= \inf_{a \in \mathrm{m}} |x-a|.\]
The following lemma plays a key role in our statement.
\begin{lemma}[A slightly extended version of Lemma 2.7 of \cite{Yu22}]
\label{lemma:Yu27}
    Set
    \[\phi(\delta)=\inf_{n \in \N} \phi(\delta;n)\]
    where
    \[\phi(\delta;n)=\inf\{S_n(\varphi-\alpha_\varphi)(\underline{x})\mid \underline{x} \in X(n), \ \max_{0 \le i \le n-1} d_{\R}({x}_i, \mathrm{m}) \ge \delta)\}.\]
    Then $\phi(\delta)>0$ if $\delta>0$.
\end{lemma}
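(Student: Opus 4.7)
The plan is to argue by contradiction: suppose $\phi(\delta)=0$ for some $\delta>0$. Then there exist periods $n_k\in\N$ and $\underline{x}^{(k)}\in X(n_k)$ with
\[
\max_{0\le i\le n_k-1} d_{\R}(x_i^{(k)},\mathrm{m})\ge \delta
\quad\text{and}\quad
S_{n_k}(\varphi-\alpha_\varphi)(\underline{x}^{(k)})\to 0.
\]
After cyclically shifting each $\underline{x}^{(k)}$, I may assume $d_{\R}(x_0^{(k)},\mathrm{m})\ge\delta$. The proof then splits into two cases depending on whether the periods $\{n_k\}$ remain bounded.

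\textbf{Case 1 (bounded periods).} Along a subsequence $n_k\equiv n$ is constant, and the set $\{\underline{y}\in X(n):d_{\R}(y_0,\mathrm{m})\ge\delta\}$ is a compact subset of $X(n)\simeq [0,1]^n$. I would extract a convergent subsequence $\underline{x}^{(k)}\to \underline{x}^\ast\in X(n)$, so that $d_{\R}(x_0^\ast,\mathrm{m})\ge\delta>0$; continuity of $\varphi$ gives $S_n(\varphi-\alpha_\varphi)(\underline{x}^\ast)=0$, and Lemma~\ref{lemma:Ban33} then forces $x_0^\ast\in\mathrm{m}$, a contradiction.

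\textbf{Case 2 (unbounded periods).} If $n_k\to\infty$, direct compactness in $X(n_k)$ breaks down, and this is where the main difficulty lies. My first step would be to reduce to a one-sided configuration via the min-max crossing built into the proof of Lemma~\ref{lemma:Ban33}. Picking $a_k\in\mathrm{m}$ with $|x_0^{(k)}-a_k|=d_{\R}(x_0^{(k)},\mathrm{m})$ and, after passing to further subsequences, assuming $a_k\to a^\ast\in\mathrm{m}$ and (WLOG) $x_0^{(k)}\ge a_k+\delta$, I would set
\[
z_i^{(k)}=\max(x_i^{(k)},a_k),\qquad w_i^{(k)}=\min(x_i^{(k)},a_k).
\]
Both sequences lie in $X(n_k)$. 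Summing the $(H_3)$ inequalities at every crossing index and using that the constant sequence $a_k^\infty$ has zero action would yield
\[
S_{n_k}(\varphi-\alpha_\varphi)(\underline{z}^{(k)})+S_{n_k}(\varphi-\alpha_\varphi)(\underline{w}^{(k)})\le S_{n_k}(\varphi-\alpha_\varphi)(\underline{x}^{(k)}),
\]
and since Lemma~\ref{lemma:Ban33} makes both left-hand terms nonnegative, each tends to $0$. The sequence $\underline{z}^{(k)}$ would thus satisfy $z_i^{(k)}\ge a_k$ for every $i$, $z_0^{(k)}\ge a_k+\delta$, and $S_{n_k}(\varphi-\alpha_\varphi)(\underline{z}^{(k)})\to 0$.

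\textbf{Main obstacle.} The hardest part is then to rule out this one-sided low-action periodic sequence, and this is where the twist hypothesis $(H_4)$ becomes essential. I would cyclically shift $\underline{z}^{(k)}$ to an index $j_k$ achieving the maximum $M_k=\max_i z_i^{(k)}\ge a_k+\delta$, pass to a subsequential limit of the three-tuple $(z_{j_k-1}^{(k)},z_{j_k}^{(k)},z_{j_k+1}^{(k)})$, and compare with the constant segment $(a^\ast,a^\ast,a^\ast)$: both limiting three-step segments will be minimal (by the vanishing of the total action, Lemma~\ref{lemma:Ban33}, and Lipschitz control of $h$), yet they share their central coordinate while differing at the outer coordinates by at least $\delta$, which contradicts $(H_4)$. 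This final step adapts the argument of Lemma 2.7 in \cite{Yu22} to our one-dimensional symbol space; the delicate point is justifying the minimality of the limiting three-tuple from only the vanishing of the total action, which requires careful quantitative estimates combining $(H_3)$ and the Lipschitz bound on $h$.
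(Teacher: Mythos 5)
Your approach --- argue by contradiction using compactness of the period-$n$ configurations --- is genuinely different from the paper's, which proves directly by induction on $n$ that $\phi(\delta;n)\ge\phi(\delta;1)>0$ by ``plucking out'' a local extremum $x_k$ (i.e., an index with $(x_k-x_{k-1})(x_k-x_{k+1})\ge0$) via $(H_3)$, thereby splitting an action in $X(n)$ into one in $X(1)$ plus one in $X(n-1)$. Your Case 1 (bounded periods) is sound and essentially re-derives $\phi(\delta;n)>0$ for each fixed $n$ by compactness. Your reduction at the start of Case 2 is also sound: comparing $\underline{x}^{(k)}$ with the constant sequence $a_k^\infty$, the crossing inequality from $(H_3)$ does give $S_{n_k}(\varphi-\alpha_\varphi)(\underline{z}^{(k)})+S_{n_k}(\varphi-\alpha_\varphi)(\underline{w}^{(k)})\le S_{n_k}(\varphi-\alpha_\varphi)(\underline{x}^{(k)})$, and Lemma~\ref{lemma:Ban33} makes both summands nonnegative, so each tends to $0$.

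The gap is at the very end of Case 2. You take a subsequential limit $(p,q,r)$ of the triples $(z_{j_k-1}^{(k)},z_{j_k}^{(k)},z_{j_k+1}^{(k)})$, argue that $(p,q,r)$ is minimal, and then claim a contradiction with $(H_4)$ by comparison with $(a^*,a^*,a^*)$, saying ``they share their central coordinate.'' But they do \emph{not}: the central coordinates are $q\ge a^*+\delta$ and $a^*$ respectively, which differ by at least $\delta$. Assumption $(H_4)$ only constrains two minimal triples with the \emph{same} middle coordinate $x_0$, so it simply does not apply to your pair. There is also a subsidiary worry: since $\underline{z}^{(k)}$ was obtained by taking $\max(\cdot,a_k)$, the index $j_k$ at which the maximum $M_k$ is attained need not be the index where $d_\R(\cdot,\mathrm{m})\ge\delta$ was originally guaranteed (there could be other points of $\mathrm{m}$ close to $M_k$), so the inequality $q\ge a^*+\delta$ alone does not give $d_\R(q,\mathrm{m})\ge\delta$. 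Both issues would need to be repaired; as written, Case 2 does not close. The paper's induction avoids all of this: at a local extremum $x_k$, $(H_3)$ yields $h(x_{k-1},x_k)+h(x_k,x_{k+1})\ge h(x_k,x_k)+h(x_{k-1},x_{k+1})$, so one peels off the cost $h(x_k,x_k)-h^*\ge\phi(\delta;1)$ whenever $x_k$ is the witness of $d_\R\ge\delta$, and otherwise recurses on the shortened orbit --- no compactness or limits of triples are needed, and $(H_4)$ is not invoked in this lemma at all.
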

\begin{remark}
    Let \[X(n;\delta)=\{\underline{x}\in X(n)| \max_{0 \le i \le n-1} d_{\R}({x}_i, \mathrm{m}) \ge \delta\}.\] Then $\phi(\delta)$ should be defined as $+\infty$ if $X(n;\delta)=\emptyset$. By the definition, if $\delta'<\delta$ then $X(n;\delta')\supset X(n;\delta)$ and $\phi(\delta')<\phi(\delta
)$.
\end{remark}
\begin{proof}[Proof of Lemma \ref{lemma:Yu27}]
    Lemma 2.7 in \cite{Yu22} corresponds to the case of $\#\mathrm{m}=2$, i.e., $\mathrm{m}=\{u_0,u_1\}$ for some $u_0,u_1 \in [0,1]$  and
    \[d_{\R}({x}_i, \mathrm{m}) = \min_{j=0,1} |x_i-u_j|,\]
    but our proof is almost the same as his proof.
    It is sufficient to prove that for $\delta>0$,
    \begin{enumerate}
        \item $\phi(\delta;1)>0$, and
        \item $\phi(\delta;n) \ge \phi(\delta;1)$ for all $n \in \N$.
    \end{enumerate}
    The first claim is clear since $\underline{x}$ is given by $x_0^\infty$ for some $x_0 \not\in \mathrm{m}$.
    We show the second using induction. The case of $n=1$ is trivial.
    Assume that it holds if $n \le m-1$.
    Take any $\underline{x}\in X(m;\delta)$. When $x_j \neq x_0$ for any 
    $j \in \{1,\ldots,m-1\}$, there exists an integer $k\in \{1,\ldots,m-1\}$
    such that
    \[(x_k-x_{k-1})(x_k-x_{k+1}) \ge 0\]
    since $x_0=x_m$.
    {From this inequality, $(H_3)$, and Remark~\ref{remark:assumption}\ (1), we have
    \[
    h(x_k,x_k) + h(x_{k-1},x_{k+1}) \le h(x_{k-1},x_k) + h(x_k,x_{k+1}).
    \]}
    Set
    \[\underline{u}=x_k^{\infty},\qquad
    \underline{v}=(x_0\cdots x_{k-1}x_{k+1}\cdots x_{m-1})^\infty.\]
    {Then
    \[
    S_m(\varphi - \alpha_{\varphi})(\underline{x})
    \ge S_1(\varphi - \alpha_{\varphi})(\underline{u}) + S_{m-1}(\varphi - \alpha_{\varphi})(\underline{v}).
    \]
    }
    Clearly, $\underline{u} \in X(1)$ and $\underline{v} \in X(m-1)$ hold and thus we obtain the following two inequalities
    by Lemma \ref{lemma:Ban33}:
    \[
    S_1(\varphi - \alpha_{\varphi})(\underline{u}) \ge 0,\qquad
    S_{m-1}(\varphi - \alpha_{\varphi})(\underline{v}) \ge 0.
    \]
    Moreover, at least one of
    \[d_{\R}({x}_k, \mathrm{m}) \ge \delta\]
    and
    \[ \max_{i \in [0,m]\backslash \{k\}} d_{\R}({x}_i, \mathrm{m}) \ge \delta\]
    is true, which yields
    at least one of the following inequalities:
    \[
    S_1(\varphi - \alpha_{\varphi})(\underline{u}) \ge \phi(\delta;1),\qquad
    S_{m-1}(\varphi - \alpha_{\varphi})(\underline{v}) \ge \phi(\delta;m-1).
    \]
      By the assumption of induction $\phi(\delta;n) \ge \phi(\delta;1)>0$ for $n \in \{1,\ldots,m-1\}$, we have
    \begin{align*}
    S_m(\varphi - \alpha_{\varphi})(\underline{x})
    &\ge S_1(\varphi - \alpha_{\varphi})(\underline{u}) + S_{m-1}(\varphi - \alpha_{\varphi})(\underline{v})
    \ge \phi(\delta;1) >0.
    \end{align*}
    When there exists $j$ such that $x_j = x_0$, then we get
    \begin{align*}
        S_m(\varphi - \alpha_{\varphi})(\underline{x})
        \ge \phi(\delta;j)+\phi(\delta;m-j)
        \ge 2\phi(\delta;1)>0.
    \end{align*}
    The proof is complete.
\end{proof}
Using these lemmata, we can show:
\begin{theorem}[cf. Main Theorem \ref{mainthm:VP}. (3)]
\label{prop:criterion}
    $\Omega_{\varphi} \subset  \mathrm{m}^{\N_0}$.
\end{theorem}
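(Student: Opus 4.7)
The plan is to argue by contradiction: suppose $\underline{x}\in\Omega_\varphi$ has a coordinate $x_{i_0}\notin\mathrm{m}$, and derive $S_\varphi(\underline{x},\underline{x})>0$, contradicting the definition of $\Omega_\varphi$. Since the $\sigma$-invariance of $\Omega_\varphi$ (coming from $\Omega_\varphi=A_\varphi$ in Main Theorem~\ref{action_minimizing_aubry}) lets us reduce to $i_0=0$, it suffices to show that $x_0\in\mathrm{m}$ for every $\underline{x}\in\Omega_\varphi$. So assume $x_0\notin\mathrm{m}$ and set $\delta:=d_\R(x_0,\mathrm{m})>0$.

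The key step is to use a near-periodic approximation from $S_\varphi(\underline{x},\underline{x})=0$ to manufacture an honestly periodic test sequence in $X(n;\delta/2)$, so that Lemma~\ref{lemma:Yu27} gives a uniform positive lower bound. Concretely, fix $\varepsilon\in(0,\delta/3)$. By Definition~\ref{defi:mane-potential} and $S_\varphi(\underline{x},\underline{x})=0$ one can pick $n\in\N$ and $\underline{z}\in B(\underline{x},\underline{x},n;\varepsilon)$ with $S_n(\varphi-\alpha_\varphi)(\underline{z})<\varepsilon$. Define the periodic point
\[
\underline{w}=(z_0\,z_1\cdots z_{n-1})^{\infty}\in X(n).
\]
Then $S_n\varphi(\underline{w})$ and $S_n\varphi(\underline{z})$ differ only in the last summand: $S_n\varphi(\underline{w})-S_n\varphi(\underline{z})=h(z_{n-1},z_0)-h(z_{n-1},z_n)$. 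Since $d(\underline{x},\underline{z})<\varepsilon$ gives $|z_0-x_0|<\varepsilon$ and $d(\sigma^n(\underline{z}),\underline{x})<\varepsilon$ gives $|z_n-x_0|<\varepsilon$, we get $|z_0-z_n|<2\varepsilon$, hence by the Lipschitz constant $L_h$ of $h$,
\[
S_n(\varphi-\alpha_\varphi)(\underline{w})\le S_n(\varphi-\alpha_\varphi)(\underline{z})+2L_h\varepsilon<\varepsilon+2L_h\varepsilon.
\]

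Now, $w_0=z_0$ satisfies $d_\R(w_0,\mathrm{m})\ge d_\R(x_0,\mathrm{m})-|x_0-z_0|>\delta-\varepsilon>\delta/2$, so $\underline{w}\in X(n;\delta/2)$ in the notation of Lemma~\ref{lemma:Yu27}. Therefore $S_n(\varphi-\alpha_\varphi)(\underline{w})\ge \phi(\delta/2)>0$, yielding
\[
\phi(\delta/2)\le (1+2L_h)\varepsilon.
\]
Letting $\varepsilon\to 0$ contradicts $\phi(\delta/2)>0$, proving $x_0\in\mathrm{m}$. Applying this to $\sigma^i(\underline{x})\in\Omega_\varphi$ for each $i\in\N_0$ yields $x_i\in\mathrm{m}$, i.e., $\underline{x}\in\mathrm{m}^{\N_0}$.

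The main obstacle is ensuring that the periodic surrogate $\underline{w}$ one constructs from $\underline{z}$ is in fact near-optimal and still sees the positive lower bound of Lemma~\ref{lemma:Yu27}; this is why the choice $\underline{w}=(z_0\cdots z_{n-1})^\infty$ (as opposed to, say, closing up with $\underline{x}$) is natural, since only one telescoping boundary term $h(z_{n-1},z_0)-h(z_{n-1},z_n)$ is produced, which is easily controlled by Lipschitz continuity of $h$ (valid by Proposition~\ref{proposition:Lip_Lip}). Everything else is just combining this estimate with Lemma~\ref{lemma:Yu27} and the $\sigma$-invariance of $\Omega_\varphi$.
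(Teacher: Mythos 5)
Your proof is correct and follows essentially the same strategy as the paper's: construct the periodic surrogate $\underline{w}=(z_0\cdots z_{n-1})^\infty$ from a near-minimizing $\underline{z}$, control the single telescoping boundary term by Lipschitz continuity of $h$, and invoke Lemma~\ref{lemma:Yu27} for the uniform positive lower bound. The only difference is a minor simplification: you reduce to the zeroth coordinate via the $\sigma$-invariance of $\Omega_\varphi=A_\varphi$, which lets you avoid the paper's bookkeeping on a fixed window $[0,N]$ and the accompanying requirement $2^{-k}<\varepsilon<2^{-N}$.
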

\begin{proof}
    It suffices to show that $\underline{x} \not\in \Omega_{\varphi}$ if $\underline{x} \in X \backslash \mathrm{m}^{\N_0}$.
    There exists an integer $N$ such that
    \[\max_{0 \le i \le N} d_{\R}({x}_i,\mathrm{m}) \ge {\delta}.\]
    since $\underline{x} \in X \backslash \mathrm{m}^{\N_0}$.
  It follows from the compactness of $X$ and continuity of $h$  that $\varphi$ is uniformly continuous.
  Thus we can take $\varepsilon_0>0$ satisfying if $|\eta_1-\eta_2|<\varepsilon_0$,
  then:
  \[
  |h(\xi,\eta_1)-h(\xi,\eta_2)| < \frac{1}{2}\phi(\delta/2),
  \]
  where the function $\phi$ is given in Lemma~\ref{lemma:Yu27}.
    Fix $\varepsilon \in (0,\min\{{2^{-N}}, {2^{-(N+2)}\delta},{\varepsilon_0}/{2}\})$.
    Let $k$ be a large number satisfying
    $2^{-k}<\varepsilon$ and $B(\underline{x},\underline{x},k;\varepsilon) \neq \emptyset$.
    For $\underline{z} \in B(\underline{x},\underline{x},k;\varepsilon)$, set $\underline{w}=\underline{w}(k)$ by
    \[\underline{w}(k)=(z_0\cdots z_{k-1})^\infty. \]
    Immediately,
    \begin{align*}
        |z_0-z_k|\le d(\underline{z},\sigma^k(\underline{z}))
        \le d(\underline{z},\underline{x})
        + d(\underline{x},\sigma^k(\underline{z})) < 2\epsilon < \epsilon_0
    \end{align*}
    and
    \begin{align*}
        |S_k(\varphi - \alpha_{\varphi})(\underline{z})- S_k(\varphi - \alpha_{\varphi})(\underline{w})|
        &=|h(z_{k-1},z_{k})-h(z_{k-1},z_0)|< \frac{1}{2}\phi(\delta/2).
    \end{align*}
    Thus we get
    \begin{align}
    \label{eq:w_ineq}
        S_k(\varphi - \alpha_{\varphi})(\underline{z})
        > S_k(\varphi - \alpha_{\varphi})(\underline{w}(k))-\frac{1}{2}\phi(\delta/2).
    \end{align}
    It is seen that 
    $\underline{w} \in B(\underline{x},\underline{x},k;2\varepsilon) \cap X(k)$
    since $k$ satisfies $\frac{1}{2^k} < \varepsilon$.
    Moreover, by $\varepsilon<\min\{\frac{1}{2^{N}}, \frac{\delta}{2^{N+2}}\}$ and $k$ with $2^{-k}<\varepsilon$ (so $k\ge N$),
    we see that 
    \[\max_{0 \le i \le N}d_{\R}({w}_i,\mathrm{m}) \ge \frac{\delta}{2}.\]
      Hence the  estimate \eqref{eq:w_ineq} shows:
    \[
    S_k(\varphi - \alpha_{\varphi})(\underline{z})
    >\frac{1}{2}\phi(\delta/2)>0
    \]
    and
    we obtain
    \[H(\underline{x},\underline{x})>0.\]
    This is the desired inequality.
\end{proof}
\begin{proof}[Proof of Main Theorem~\ref{mainthm:VP}]
We already have (1)-(3) in Main Theorem~\ref{mainthm:VP}.
We now prove the rest part.
Suppose that $h(x,x)$ has a unique minimum point $a_*$ in $[0,1]$. Then $\mathrm{m}=\{a_*\}$, and thus $\Omega_\varphi=\{a_*^\infty\}$.
Since the Mather set $\mathscr{M}_\varphi$ is not empty and included in $\Omega_\varphi$, we obtain the desired result.
\end{proof}

\section{TPO property} \label{sec:TPO}
In this section, we discuss typical properties of optimal measures.
We still consider the case that the potential function is 2-locally constant.

As stated in the last part of Section~\ref{VP}, by Theorem~\ref{prop:criterion}, we see that if $\underline{x}\in \Omega_\varphi$ then $\underline{x}\in \mathrm{m}^{\N_0}$.
Therefore, if $\mathrm{m}=\{a\in [0,1] \mid h(a,a)=\min_{x\in [0,1]} h(x,x)\}$ for $\varphi=h(x,y)$ is a singleton $\{a\}$,
we have $\Omega_h=\{a^\infty\}$ and it must coincide with the Mather set of $h$.
As described below, In this section, we will see that such $h$ is typical.

Let $r\ge 2$ be a positive integer. Consider the set of $C^r$-variational structure with the twist condition,
\[
\mathscr{H}^r=\{h\in C^r([0,1]^2;\R)\mid D_2 D_1 h<0 \},
\]
equipped with the $C^r$-norm, i.e.,
\[
	||h||_{C^r}=\sum_{|\beta|\le r} \sup_{(x,y)\in [0,1]^2} |\partial^\beta h(x,y)|.
\]
For $h\in\mathscr{H}^r$, let $\tilde{h}(x)=h(x,x)$ for $x\in [0,1]$.

\begin{proposition}\label{prop:perturbation1}
	The subset $\mathscr{O}$ in $\mathscr{H}^r$ given by
	\[
		\mathscr{O}=\{h\in\mathscr{H}^r|\  \tilde{h}\  \text{has a unique minimum point} \ x^*(h)\  s.t. \  \tilde{h}''(x^*(h))>0\}
	\]
	is $C^r$ open and dense in $\mathscr{H}^r$.
\end{proposition}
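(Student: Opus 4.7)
The plan is to reduce Proposition~\ref{prop:perturbation1} to a one-variable genericity statement about the diagonal restriction $\tilde h(x) := h(x,x)$, and then treat openness and density separately. The reduction rests on two observations: (a) the linear map $h \mapsto \tilde h$ is bounded from $(\mathscr H^r,\|\cdot\|_{C^r})$ into $(C^r([0,1]),\|\cdot\|_{C^r})$; (b) any $V \in C^r([0,1])$ lifts to $P(x,y) := V(x) \in C^r([0,1]^2)$ with $D_2 D_1 P \equiv 0$, so that $h+P \in \mathscr H^r$ for every $h \in \mathscr H^r$, with $\widetilde{h+P} = \tilde h + V$ and $\|P\|_{C^r([0,1]^2)} = \|V\|_{C^r([0,1])}$. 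Consequently, it suffices to show that
\[
\tilde{\mathscr O} := \{f \in C^r([0,1]) \mid f \text{ has a unique minimizer } x^* \text{ with } f''(x^*) > 0\}
\]
is open and dense in $C^r([0,1])$.

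For openness, fix $f \in \tilde{\mathscr O}$ with unique minimizer $x^*$, and first assume $x^* \in (0,1)$ (the endpoint case is a one-sided variant of the same argument). Pick $\eta > 0$ so that $f'' \ge c > 0$ on $U := [x^*-\eta, x^*+\eta]$, and set $\delta := \min_{[0,1]\setminus U} f - f(x^*)$; by uniqueness of $x^*$ and compactness of $[0,1]\setminus U$, one has $\delta > 0$. For any $g$ with $\|g-f\|_{C^2}$ sufficiently small one has $g'' \ge c/2$ on $U$, so $g$ is strictly convex on $U$, and the Implicit Function Theorem applied to $g'$ near $x^*$ yields a unique critical point $x^*(g) \in U$ with $g''(x^*(g)) > 0$; the gap estimate $\min_{[0,1]\setminus U} g > f(x^*) + 2\delta/3 > g(x^*(g)) + \delta/4$ then forces $x^*(g)$ to be the unique global minimizer of $g$. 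This shows $\tilde{\mathscr O}$ is $C^r$-open, and hence so is $\mathscr O$.

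For density, given $f \in C^r([0,1])$ and $\varepsilon > 0$, a standard Sard-type perturbation (or an elementary finite-parameter transversality argument) produces an arbitrarily small $V \in C^r([0,1])$ such that $f + V$ is Morse in the interior, with pairwise distinct critical values, also distinct from the two boundary values; this forces a unique global minimizer. When that minimizer lies in the interior, the Morse condition yields $(f+V)''(x^*) > 0$ and hence $f+V \in \tilde{\mathscr O}$. The main obstacle is the boundary case: if the global minimum of $f+V$ sits at an endpoint of $[0,1]$ where $(f+V)'$ is bounded away from zero, then no arbitrarily small additional $C^r$-perturbation can either translate that minimum inward or adjust the sign of the second derivative there. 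Overcoming this is the most delicate step, and requires a refined choice of the initial perturbation $V$ that simultaneously creates a non-degenerate interior local minimum whose value undercuts both boundary values of $f+V$; once that is arranged, the previous step places $f+V$ in $\tilde{\mathscr O}$, and lifting via $P(x,y) = V(x)$ completes the proof of density for $\mathscr O$.
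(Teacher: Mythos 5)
Your reduction to a one-variable statement and your openness argument (implicit function theorem near $x^*$ combined with a gap estimate away from $x^*$) are both sound --- indeed your openness proof is more careful than the paper's, which tacitly asserts that the perturbed minimizer remains exactly at $x_h$ rather than merely nearby. For density, however, you take a genuinely different route, and yours has a real hole. The paper's density proof is a short, direct construction: fix any global minimizer $a$ of $\tilde h$, set $h_\varepsilon(x,y)=h(x,y)+\varepsilon V(x)$ where $V\in C^r([0,1])$ has a strict minimum at $a$ with $V(a)\ge 0$ and $V''(a)>0$; then $D_2D_1 h_\varepsilon=D_2D_1h<0$ because $V$ does not depend on $y$, and for $x\ne a$ one gets $\tilde h_\varepsilon(x)=\tilde h(x)+\varepsilon V(x)>\tilde h(a)+\varepsilon V(a)=\tilde h_\varepsilon(a)$, so $a$ is the unique minimizer. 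No Sard, no Morse genericity, no control of critical values, and --- crucially --- no attempt to relocate the minimizer.

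Your Sard/Morse approach runs squarely into the boundary problem you yourself flag, and your proposed fix --- ``creates a non-degenerate interior local minimum whose value undercuts both boundary values of $f+V$'' --- cannot work in general. If $f'\ge c>0$ on all of $[0,1]$, then any $V$ with $\|V\|_{C^1}<c$ leaves $f+V$ strictly increasing on $[0,1]$, which has no interior critical point whatsoever, hence no interior local minimum for you to create; the unique global minimizer of every sufficiently small perturbation of $f$ is forced to be the endpoint $0$ and cannot be pushed inward. The paper avoids this entirely by pinning the minimizer at the already-existing $a$ (which may well be an endpoint) instead of trying to move it. Replace your density step with the explicit bump perturbation.
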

\begin{proof}
	(Dense): Take $h\in \mathscr{H}^r$. Let $a\in [0,1]$ be a minimum point of $\tilde{h}$. Take arbitrary $\varepsilon>0$. Set
	\[
	h_\varepsilon(x,y):={h}(x,y)+\varepsilon V(x)
	\]
	where $V\in C^r([0,1];\R)$ with a unique minimum point at $x=a$ s.t. $V(a)\ge 0, V''(a)>0$ (e.g., $V(x)=\cos (2\pi(x-a)+\pi)$ or $(x-a)^2$).
	Then $||{h}_\varepsilon-{h}||_{C^2}=\varepsilon||V||_{C^2}$ and $D_2D_1h_\varepsilon=D_2D_1h<0$.
	Moreover, for $x\neq a$
	\[
		{h}_\varepsilon(x,x)=h(x,x)+\varepsilon V(x)\ge h(a,a)+\varepsilon V(a)>h(a,a)=h_\varepsilon(a,a).
	\]
	Therefore, $h_\varepsilon\in \mathscr{O}$ holds.
	
	(Open): Let $h\in\mathscr{O}$. Denote the corresponding minimum point $x_h$ of $\tilde{h}$.
	Take $\varepsilon\in (0,  \tilde{h}''(x_h)/2)$
	Let $h_\varepsilon\in\mathscr{H}$ s.t. $||h_\varepsilon-h||_{C^r}<\varepsilon$.
	Then we have
	\begin{align*}
	|(\tilde{h}_\varepsilon-\tilde{h})''(x_h)|&=|D_1D_1({h}_\varepsilon-{h})(x_h,x_h)+D_1D_2({h}_\varepsilon-{h})(x_h,x_h)\\
    &\qquad+D_2D_1({h}_\varepsilon-{h})(x_h,x_h)+D_2D_2({h}_\varepsilon-{h})(x_h,x_h)|\\
     &\le ||h_\varepsilon-h||_{C^r}<\varepsilon,
	\end{align*}
	i.e.,
	\[
		\tilde{h}_\varepsilon''(x_h)\ge {\tilde{h}''(x_h)}-\varepsilon> \tilde{h}''(x_h)/2>0.
	\]
	This also implies that $h_\varepsilon$ has a unique minimum point at $x=x_h$,
	which yields $h_\varepsilon\in \mathscr{O}$.  
\end{proof}
Similarly we easily obtain the following perturbation result in the sense of Ma\~{n}\'{e}. Note that for any $h\in \mathscr{H}^r$ and $f\in C^r([0,1];\R)$ the function $h(x,y)+f(x)$ satisfies the twist condition, i.e., $D_2D_1(h+f)=D_2D_1h<0$.
\begin{proposition}\label{prop:perturbation2}
	For arbitrary $h\in \mathscr{H}^r$, the set
    \begin{align*}
        \mathscr{V}_h&=\{V\in C^r([0,1];\R)|\
          \widetilde{h}+V\  \text{has a unique minimum point} \\ 
          &\qquad\qquad\qquad\qquad\qquad  x^*(\tilde{h}+V)\  s.t. \  (\widetilde{h}+V)''(x^*(\tilde{h}+V))>0\}
    \end{align*}
    is $C^r$ open and dense in $\mathscr{V}_h$ in $C^r([0,1];\R)$.
\end{proposition}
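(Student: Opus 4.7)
The plan is to imitate the proof of Proposition~\ref{prop:perturbation1} with the roles of $h$ and the perturbation swapped: here $h\in\mathscr{H}^r$ is fixed and $V$ varies over $C^r([0,1];\R)$. A first observation that simplifies matters is that the twist condition plays no role on the $V$-side, since $D_2D_1(h+V)=D_2D_1h<0$ holds automatically for every $V$. Thus the entire statement reduces to a one-variable perturbation problem: making the diagonal function $\tilde h+V$ on $[0,1]$ have a unique, non-degenerate minimum.

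For density, given $V\in C^r([0,1];\R)$ and $\varepsilon>0$, I would first extract a minimum point $a\in[0,1]$ of $\tilde h+V$ by compactness and continuity. Then choose $W\in C^r([0,1];\R)$ whose unique minimum on $[0,1]$ is at $a$ and which satisfies $W''(a)>0$ (for example $W(x)=(x-a)^2$). The perturbation $V_\varepsilon:=V+\varepsilon W$ satisfies
\[
(\tilde h+V_\varepsilon)(x)=(\tilde h+V)(x)+\varepsilon W(x)>(\tilde h+V)(a)+\varepsilon W(a)=(\tilde h+V_\varepsilon)(a)
\]
for every $x\neq a$, by adding $(\tilde h+V)(x)\ge(\tilde h+V)(a)$ and $W(x)>W(a)$. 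At an interior $a$ we also get $(\tilde h+V_\varepsilon)''(a)=(\tilde h+V)''(a)+\varepsilon W''(a)>0$ because the first term is $\ge 0$. Since $\|V_\varepsilon-V\|_{C^r}=\varepsilon\|W\|_{C^r}\to 0$, this gives density.

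For openness, given $V\in\mathscr{V}_h$ with unique minimum $x^*$ and $(\tilde h+V)''(x^*)>0$, I would fix a neighborhood $U$ of $x^*$ on which $(\tilde h+V)''>c>0$, and set $\delta:=\min_{[0,1]\setminus U}(\tilde h+V)-(\tilde h+V)(x^*)>0$. For $V'\in C^r([0,1];\R)$ with $\|V'-V\|_{C^r}<\varepsilon$, uniform $C^0$- and $C^2$-closeness forces the global minimum of $\tilde h+V'$ to stay in $U$ (once $\varepsilon<\delta/2$) and keeps $\tilde h+V'$ strictly convex on $U$. The implicit function theorem applied to the equation $(\tilde h+V)'=0$ at the non-degenerate zero $x^*$ then produces a unique critical point $x^*(V')\in U$ of $\tilde h+V'$, which is therefore its unique global minimum, with positive second derivative.

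The main technical obstacle I expect is the clean treatment of boundary minima $a\in\{0,1\}$: there $(\tilde h+V)''(a)\ge 0$ need not hold and the implicit function theorem does not apply directly. This should be handled by rescaling $W$ so that $\varepsilon W''(a)$ dominates any negative second-derivative contribution on the density side, and by working with one-sided derivatives together with the same minimization-by-compactness argument on the openness side. Apart from this bookkeeping, the proof is a mechanical translation of Proposition~\ref{prop:perturbation1}.
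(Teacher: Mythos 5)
The paper offers no separate argument for this proposition: it simply observes that adding a function of $x$ alone leaves the twist condition intact and then states that the result follows ``similarly'' to Proposition~\ref{prop:perturbation1}. Your reconstruction is therefore exactly the intended proof, and on the openness side you are in fact \emph{more} careful than the paper's own proof of Proposition~\ref{prop:perturbation1}, which asserts that the perturbed function still has its unique minimum at the \emph{same} point $x_h$; that is false in general, and your argument locating $x^*(V')$ near $x^*$ via the implicit function theorem and then comparing with values off a neighbourhood $U$ is the correct way to close that gap.

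The endpoint concern you raise is a genuine gap, and your proposed fix does not repair it. If the unique minimum of $g=\tilde h+V$ sits at an endpoint $a\in\{0,1\}$ with one-sided derivative $g'(a)\neq 0$ and with $g''(a)<0$, this configuration is $C^{2}$-open: every $V'$ with $\|V'-V\|_{C^{2}}$ small still has its unique minimum at $a$ with negative second derivative, so $\mathscr{V}_h$ as written fails to be dense near such a $V$. ``Rescaling $W$'' cannot help, since for fixed $W$ the correction $\varepsilon W''(a)$ vanishes as $\varepsilon\to 0$, while letting $W$ grow with $1/\varepsilon$ destroys the $C^r$-smallness of the perturbation. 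The same issue occurs, silently, in the density half of the paper's proof of Proposition~\ref{prop:perturbation1}, which never verifies the second-derivative condition for $h_\varepsilon$. The cure is to weaken the non-degeneracy requirement at endpoint minima to the one-sided condition $(\tilde h+V)'(a)\neq 0$, which is what actually persists under small $C^r$ perturbations; this modification still yields Main Theorem~\ref{maintheorem:TPO}, since there only the uniqueness of the minimum point of $\tilde h+V$ is used.
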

\begin{proof}[Proof of Main Theorem~\ref{maintheorem:TPO}]
Combining Propositions~\ref{prop:perturbation1},\ref{prop:perturbation2} and Main Theorem~\ref{mainthm:VP}, we obtain Main Theorem~\ref{maintheorem:TPO}.
\end{proof}

\vspace*{33pt}

\noindent
\textbf{Acknowledgement.}~ 
The first author was partially supported by JSPS KAKENHI Grant Number  23H01081 and 23K19009.
The third author was partially supported by JSPS KAKENHI Grant Number 21K13816.

\vspace{11pt }
\noindent
\textbf{Data Availability.}~
Data sharing not applicable to this article as no datasets were generated or analyzed during the current study.


\bibliographystyle{alpha}
\bibliography{EOVP.bib}

\end{document}